\documentclass[11pt]{amsart}

\usepackage{amssymb, amscd}
\usepackage{epsfig, mathtools, tensor}
\usepackage[vmargin=1in, hmargin=1.25in]{geometry}
\usepackage[font=small,format=plain,labelfont=bf,up,textfont=it,up]{caption}
\usepackage{microtype}
\usepackage[shortlabels]{enumitem}
\setlist[itemize]{nosep}
\usepackage[backref=page, bookmarks, bookmarksdepth=2, colorlinks=true, linkcolor=blue, citecolor=blue, urlcolor=blue]{hyperref}
\usepackage{etoolbox}
\usepackage{tikz-cd}
\usepackage{tabularray}
\apptocmd{\thebibliography}{\raggedright}{}{}
\usepackage[only,llparenthesis, rrparenthesis, llbracket,rrbracket]{stmaryrd}
\SetSymbolFont{stmry}{bold}{U}{stmry}{m}{n}
\usepackage{bm}
\usepackage{refcount}

\mathtoolsset{showonlyrefs}

\newcommand{\Figure}[1]{%
  \par
  \nointerlineskip
  \vspace{5pt}%
  \noindent
  \makebox[\linewidth][c]{%
    \psfig{file=#1,scale=1}%
  }%
  \par
  \nointerlineskip
  \vspace{5pt}%
  \noindent\ignorespaces
}

\makeatletter
\patchcmd{\@maketitle}{\global\topskip42\p@\relax}
  {\global\topskip42\p@\relax \vspace*{-38pt}}
  {}{}
\makeatother

\setenumerate[0]{label=(\alph*)}

\renewcommand*{\backref}[1]{}
\renewcommand*{\backrefalt}[4]{%
    \ifcase #1 (Not cited.)%
    \or        (Cited on page~#2.)%
    \else      (Cited on pages~#2.)%
    \fi}

\DeclareSymbolFont{bbold}{U}{bbold}{m}{n}
\DeclareSymbolFontAlphabet{\mathbbold}{bbold}

\DeclareSymbolFont{extraup}{U}{zavm}{m}{n}
\DeclareMathSymbol{\varheartsuit}{\mathalpha}{extraup}{86}

\newcommand{\arxiv}[1]{\href{http://arxiv.org/abs/#1}{{\tt arXiv:#1}}}
\numberwithin{equation}{section}

\theoremstyle{plain}
\newtheorem{theorem}{Theorem}[section]
\newtheorem{maintheorem}{Theorem}

\newtheorem{lemma}[theorem]{Lemma}

\newtheorem{corollary}[theorem]{Corollary}

\newtheorem*{unnumberedclaim}{Claim}

\newtheorem{stepx}{Step}
\newenvironment{step}[1]
 {\renewcommand\thestepx{#1}\stepx}
 {\endstepx}

\newtheorem{claimx}{Claim}
\newenvironment{claim}[1]
 {\renewcommand\theclaimx{#1}\claimx}
 {\endclaimx}

\providecommand{\previous}{}



\providecommand{\previous}{}



\providecommand{\previous}{}



\theoremstyle{definition}
\newtheorem{asm}[theorem]{Assumption}

\newtheorem*{genusasm}{Genus assumption}

\newtheorem{defn}[theorem]{Definition}
\newenvironment{definition}[1][]{\begin{defn}[#1]\pushQED{\qed}}{\popQED \end{defn}}
\newtheorem{notn}[theorem]{Notation}

\newtheorem{covn}[theorem]{Convention}
\newenvironment{convention}[1][]{\begin{covn}[#1]\pushQED{\qed}}{\popQED \end{covn}}
\newtheorem{warg}[theorem]{Warning}
\newenvironment{warning}[1][]{\begin{warg}[#1]\pushQED{\qed}}{\popQED \end{warg}}

\theoremstyle{remark}
\newtheorem{rmk}[theorem]{Remark}
\newenvironment{remark}[1][]{\begin{rmk}[#1] \pushQED{\qed}}{\popQED \end{rmk}}
\newtheorem{eg}[theorem]{Example}
\newenvironment{example}[1][]{\begin{eg}[#1] \pushQED{\qed}}{\popQED \end{eg}}

\theoremstyle{plain}

\DeclareMathOperator{\Diff}{Diff}

\DeclareMathOperator{\Image}{Im}

\DeclareMathOperator{\GL}{GL}

\DeclareMathOperator{\PGL}{PGL}

\DeclareMathOperator{\PSL}{PSL}

\newcommand\R{\ensuremath{\mathbb{R}}}
\newcommand\C{\ensuremath{\mathbb{C}}}
\newcommand\Z{\ensuremath{\mathbb{Z}}}
\newcommand\Q{\ensuremath{\mathbb{Q}}}

\DeclareMathOperator{\HH}{H}
\newcommand\RH{\ensuremath{\widetilde{\HH}}}

\DeclareMathOperator{\RP}{\mathbb{RP}}

\DeclareMathOperator{\Aut}{Aut}

\newcommand\Span[1]{\ensuremath{\langle #1 \rangle}}
\newcommand\SpanSet[2]{\ensuremath{\langle \text{#1 $|$ #2} \rangle}}
\newcommand\Set[2]{\ensuremath{\left\{\text{#1 $|$ #2}\right\}}}

\newcommand\cC{\ensuremath{\mathcal{C}}}

\newcommand\cH{\ensuremath{\mathcal{H}}}

\newcommand\cM{\ensuremath{\mathcal{M}}}

\newcommand\cS{\ensuremath{\mathcal{S}}}
\newcommand\cT{\ensuremath{\mathcal{T}}}

\newcommand\fX{\ensuremath{\mathfrak{X}}}

\newcommand\bz{\ensuremath{\mathbf{z}}}

\newcommand\bbD{\ensuremath{\mathbb{D}}}

\newcommand\bbH{\ensuremath{\mathbb{H}}}

\newcommand\bbP{\ensuremath{\mathbb{P}}}

\newcommand\bbS{\ensuremath{\mathbb{S}}}

\newcommand\tA{\ensuremath{\widetilde{A}}}

\newcommand\tF{\ensuremath{\widetilde{F}}}

\newcommand\tU{\ensuremath{\widetilde{U}}}

\newcommand\tX{\ensuremath{\widetilde{X}}}

\newcommand\tZ{\ensuremath{\widetilde{Z}}}

\newcommand\tb{\ensuremath{\widetilde{b}}}

\newcommand\tf{\ensuremath{\widetilde{f}}}

\newcommand\tSigma{\ensuremath{\widetilde{\Sigma}}}

\newcommand\oG{\ensuremath{\overline{G}}}

\newcommand\oZ{\ensuremath{\overline{Z}}}

\newcommand\om{\ensuremath{\overline{m}}}

\newcommand\ow{\ensuremath{\overline{w}}}

\newcommand\Mod{\ensuremath{\operatorname{Mod}}}
\newcommand\PMod{\ensuremath{\operatorname{PMod}}}
\newcommand\Moduli{\ensuremath{\operatorname{\cM}}}
\newcommand\Teich{\ensuremath{\operatorname{\cT}}}
\newcommand\Hyp{\ensuremath{\operatorname{Hyp}}}
\newcommand\oModuli{\ensuremath{\operatorname{\overline{\cM}}}}
\newcommand\oTeich{\ensuremath{\operatorname{\overline{\cT}}}}
\newcommand\vcd{\ensuremath{\operatorname{vcd}}}
\newcommand\irr{\ensuremath{\operatorname{irr}}}
\newcommand\Gr{\ensuremath{\operatorname{Gr}}}
\newcommand\an{\ensuremath{\operatorname{an}}}
\newcommand\dz{\ensuremath{\operatorname{dz}}}
\newcommand\dw{\ensuremath{\operatorname{dw}}}
\newcommand\sym{\ensuremath{\operatorname{sym}}}

\title{The rational homology of quotients of the curve complex}

\author{Andrew Putman}
\address{Department of Mathematics; University of Notre Dame; 255 Hurley Hall; Notre Dame, IN 46556; USA}
\email{andyp@nd.edu}

\thanks{AP was supported by NSF grant DMS-2305183.}

\begin{document}
    
\newpage
        
\begin{abstract}
Let $\Mod_{g,n}$ be the mapping class group of a genus-$g$ surface with $n$ punctures
and let $\cC_{g,n}$ be its curve complex.
For a finite-index subgroup $G < \Mod_{g,n}$,
Boggi proved that $\RH_k(\cC_{g,n}/G;\Q) = 0$ for $g+n \gg k$.  This
plays an important role in the work of Putman--Wieland on the virtual first
Betti number of $\Mod_{g,n}$.  The proof of Boggi's theorem uses
mixed Hodge theory and
is embedded in his flawed paper purporting to show that the mapping class group
has the congruence subgroup property.  We give a detailed exposition of
Boggi's proof aimed at geometric topologists.
\end{abstract} 

\maketitle
\thispagestyle{empty}

\vspace{-20pt}
\section{Introduction}
\label{section:introduction}

Let $\Sigma_{g,n}$ be a connected oriented genus-$g$ surface with $n$ punctures.  We will sometimes
omit $n$ if $n=0$.  A simple closed curve on $\Sigma_{g,n}$ is {\em essential} if
it does not bound a disk or a once-punctured disk.  The {\em curve complex}
of $\Sigma_{g,n}$, denoted $\cC_{g,n}$, is the following simplicial complex:
\begin{itemize}
\item The vertices of $\cC_{g,n}$ are isotopy classes of essential simple closed curves on
$\Sigma_{g,n}$.
\item The $p$-simplices of $\cC_{g,n}$ are sets $\{\gamma_0,\ldots,\gamma_p\}$ of
distinct vertices that can be realized by disjoint curves.
\end{itemize}
The left-hand side of the following
figure shows curves on $\Sigma_{3,2}$ representing vertices of $\cC_{3,2}$, 
and the right-hand side shows the full subcomplex of $\cC_{3,2}$ spanned by these vertices:
\Figure{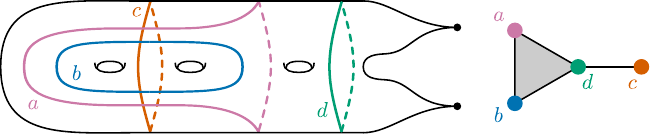}

\subsection{High connectivity}
The curve complex was introduced by Harvey \cite{HarveyCurve} to encode the combinatorial
structure at infinity of Teichm\"{u}ller space.  It was further developed by Harer,
who in \cite[Theorem 3.5]{HarerVCD} proved that $\cC_{g,n}$ is $c(g,n)$-connected 
for\footnote{Our convention is that a space $X$ is $c$-connected if for all $k \leq c$
and all maps $f\colon \bbS^k \rightarrow X$, there exists a map $F\colon \bbD^{k+1} \rightarrow X$
with $F|_{\bbS^k} = f$.
We have $\bbS^k = \bbD^k = \emptyset$ for $k \leq -1$ and $\bbD^0 = \{\text{pt}\}$, so this
means in particular that all spaces $X$ are $c$-connected for $c \leq -2$ and that $X$ is $(-1)$-connected
if and only if $X \neq \emptyset$.}
\[c(g,n) = \begin{cases}
2g-3 & \text{for $g \geq 1$ and $n = 0$},\\
2g+n-4 & \text{for $g \geq 1$ and $n \geq 1$},\\
n-5 & \text{for $g = 0$}.
\end{cases}\]
Here we are identifying the simplicial complex $\cC_{g,n}$ with its geometric realization, and we
make similar identifications throughout this paper.
See \cite{BrendleBroaddusPutman, HatcherVogtmannTethers, IvanovCerf, KentLeiningerSchleimer} 
for alternate approaches to various parts of Harer's theorem.

\subsection{Boggi's theorem}
Let $\Mod_{g,n}$ be the mapping class group of $\Sigma_{g,n}$, that is, the group of
isotopy classes of orientation-preserving diffeomorphisms of $\Sigma_{g,n}$.  These
diffeomorphisms are allowed to permute the punctures.  The group
$\Mod_{g,n}$ acts on $\cC_{g,n}$.  Consider a finite-index subgroup $G$ and the
corresponding quotient $\cC_{g,n}/G$.
Building on work of 
Arbarello--Cornalba \cite[Lemma 2.6]{ArbarelloCornalba},
Boggi \cite{BoggiCongruence} proved that $\RH_k(\cC_{g,n}/G;\Q)=0$ for
$k$ in the same range as in Harer's theorem:\footnote{We regard reduced 
homology as the homology of the augmented singular chain complex, with
the augmentation appearing in degree $-1$.  In particular,
$\RH_{-1}(\emptyset;\Q)=\Q$ and $\RH_k(\emptyset;\Q) = 0$ for $k \neq -1$.}

\begin{maintheorem}[{Boggi, cf.\ \cite[Lemma 5.5]{BoggiCongruence}}]
\label{maintheorem:boggi}
Fix $g,n \geq 0$.  Let $G < \Mod_{g,n}$ be a finite-index subgroup.
Then $\RH_k(\cC_{g,n}/G;\Q) = 0$ for $k \leq c(g,n)$.
\end{maintheorem}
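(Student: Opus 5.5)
The plan is to realize $\cC_{g,n}/G$ as the dual complex of the boundary divisor of a Deligne--Mumford-type compactification of the moduli space $\Moduli_{g,n}[G] = \Teich_{g,n}/G$. Its reduced rational homology is then computed by the top-weight part of the mixed Hodge structure on $\HH^*(\Moduli_{g,n}[G];\Q) \cong \HH^*(G;\Q)$, and Harer's computation of the virtual cohomological dimension makes that part vanish in the stated range. I will assume $g,n$ are in the hyperbolic range, since the remaining cases are handled by direct inspection of $\cC_{g,n}$.

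\textbf{Reduction to a small normal subgroup.} Choose a finite-index normal subgroup $G' \lhd \Mod_{g,n}$ contained in $G$, for instance a level-$m$ congruence subgroup with $m \geq 3$ intersected with $G$ and normalized. We have $\cC_{g,n}/G = (\cC_{g,n}/G')/(G/G')$. For a finite group acting simplicially (after a barycentric subdivision), the rational homology of the quotient equals the coinvariants of the rational homology, so $\RH_k(\cC_{g,n}/G;\Q)$ is a quotient of $\RH_k(\cC_{g,n}/G';\Q)$. Hence it suffices to treat $G'$. We choose $G'$ deep enough that it is torsion-free. Then $X = \Teich_{g,n}/G'$ is a smooth quasi-projective variety of complex dimension $d = 3g-3+n$ with $\HH^*(X;\Q) = \HH^*(G';\Q)$.

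\textbf{Compactification and the dual complex.} Let $\overline{X}$ be the normalization of $\oModuli_{g,n}$ in the function field of $X$. By the local structure of $\oModuli_{g,n}$, $\overline{X}$ is locally the quotient of a smooth space by a finite group, and $D = \overline{X}\setminus X$ is a normal-crossings divisor in this orbifold sense. The boundary strata of $\overline{X}$ are indexed as follows:
\begin{itemize}
\item the codimension-$(p+1)$ strata correspond to $G'$-orbits of $p$-simplices of $\cC_{g,n}$, i.e.\ of multicurves;
\item the incidences between strata correspond to the face relations.
\end{itemize}
Consequently the dual complex $\Delta(D)$ is identified with $\cC_{g,n}/G'$, possibly as a $\Delta$-complex or symmetric $\Delta$-complex. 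This is the step I expect to be the main obstacle. Self-intersections of boundary components, and stabilizers acting on the local branches, must be handled so that the identification holds at least on rational chains. Working with $\Q$-coefficients and symmetric $\Delta$-complexes should suffice, and alternatively one can shrink $G'$ further, e.g.\ using Looijenga's smooth level covers.

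\textbf{Weight spectral sequence.} Deligne's weight spectral sequence for $(\overline{X},D)$ extends to orbifold normal crossings with $\Q$-coefficients, since finite group quotients are harmless rationally. Its weight-$2d$ row is the cellular cochain complex of $\Delta(D)$, appropriately shifted. This yields
\[ \RH_{k-1}(\Delta(D);\Q) \cong \Gr^W_{2d}\HH^{2d-k}(X;\Q). \]
By Harer, the virtual cohomological dimension is
\[ \vcd(\Mod_{g,n}) = \begin{cases} 4g-5 & \text{if } n=0,\\ 4g+n-4 & \text{if } g\geq 1,\ n\geq 1,\\ n-3 & \text{if } g=0. \end{cases} \]
Therefore $\HH^{2d-k}(G';\Q)=0$ whenever $2d-k>\vcd$. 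Substituting $2d = 6g-6+2n$ gives vanishing of $\RH_{k-1}$ exactly for $k-1 \leq c(g,n)$ in each of the three cases. For example, when $n=0$ we get $2d-\vcd = 2g-1$, so $\RH_{k-1}=0$ for $k-1\leq 2g-3$. This proves the theorem for $G'$, and hence for $G$.
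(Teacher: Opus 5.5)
Your outline is correct, but it runs the Hodge theory in the direction dual to the paper's.

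\textbf{Your route.} You identify $\RH_{k-1}(\Delta(D);\Q)$ with the top-weight piece $\Gr^W_{2d}\HH^{2d-k}(X;\Q)$ of the \emph{open} moduli space; this is the Chan--Galatius--Payne formulation. You then kill it because $\HH^{2d-k}(G';\Q)=0$ above the vcd.

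\textbf{The paper's route.} The paper instead works on the \emph{boundary}. Poincar\'e duality and Harer's vcd give $\HH^{k+1}_c(\Moduli_{g,n}(G);\Q)=0$, so the restriction $\HH^k(\oModuli_{g,n}(G);\Q)\to\HH^k(\partial\oModuli_{g,n}(G);\Q)$ is onto. Since the compactification is smooth projective, strictness makes the image pure of weight $k$. Hence $W_0\HH^k(\partial\oModuli_{g,n}(G);\Q)=0$, and this $W_0$ is the cohomology of the dual complex.

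\textbf{Comparison.} The two arguments are Poincar\'e-dual packagings of the single fact $W_0\HH^k_c(X;\Q)\cong\RH^{k-1}(\Delta(D);\Q)$.
The paper's version uses only classical facts about smooth projective and simple normal crossing varieties. The cost is that it must arrange for $\oModuli_{g,n}(G)$ to be smooth with simple normal crossing boundary whose dual complex is literally the semisimplicial set $\cC_{g,n}/G$. Lemma~\ref{lemma:dualboundary} does this with Boggi--Pikaart \emph{plus} the condition $(\spadesuit)$: no element of $G$ moves a curve to a distinct disjoint curve. That condition is what excludes self-intersecting boundary divisors and rotations of simplices, and smoothness alone (your Looijenga fallback) does not exclude them.

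\textbf{Your other fallback.} Orbifold normal crossings, symmetric $\Delta$-complexes and $\Q$-coefficients genuinely work, and even bypass Boggi--Pikaart. But then the step you flag as the main obstacle must actually be proved, because the strata poset alone does not determine the dual complex. You need to show two things:
\begin{itemize}
\item the local branches of $D$ along the stratum of $\sigma$ are indexed by the curves of $\sigma$;
\item the monodromy on those branches is the image of the stabilizer of $\sigma$ in $G'$ acting on $\sigma$.
\end{itemize}
Only then is the realization of the dual complex $\cC_{g,n}/G'$.
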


We remark that the statement of \cite[Lemma 5.5]{BoggiCongruence} is different from 
Theorem \ref{maintheorem:boggi}.  The result \cite[Lemma 5.5]{BoggiCongruence} forms part of
Boggi's flawed proof that mapping class groups have the congruence subgroup property,
and the proof of \cite[Lemma 5.5]{BoggiCongruence} is exactly where that proof goes
wrong.  However, a close reading of the proof of \cite[Lemma 5.5]{BoggiCongruence} shows that it
does correctly establish Theorem \ref{maintheorem:boggi}.  Roughly speaking, Boggi's error
was trying to deduce integral results from Hodge theory.
See \cite{BoggiMathscinet} and \cite[\S 5]{LochakProfinite} for a more detailed discussion of this. 

\begin{remark}
Boggi's paper only handles finite-index subgroups $G < \Mod_{g,n}$ that
satisfy certain technical assumptions.  However, a standard and elementary argument
shows that the general case follows from this.  We will describe this later in the paper.
\end{remark}

\subsection{Why is Boggi's theorem interesting?}

As we will describe below, our primary motivation for caring about
Theorem \ref{maintheorem:boggi} is that it is a key ingredient in
the author's work with Wieland on the virtual first Betti number
of the mapping class group.  Before describing this, we point out
two other features of Theorem \ref{maintheorem:boggi}:
\begin{itemize}
\item It is one of the few unconditional results known about all finite-index
subgroups of the mapping class group. 
\item Though the statement of Theorem \ref{maintheorem:boggi} is
purely topological, its proof uses deep results about mixed
Hodge theory and the algebraic geometry of the Deligne--Mumford
compactification of the moduli space of curves.
\end{itemize}
The easiest nontrivial special case of Theorem \ref{maintheorem:boggi}
is as follows:
\begin{itemize}
\item Fix $g \geq 2$.  For all finite-index subgroups $G < \Mod_g$,
we have $\HH_1(\cC_g/G;\Q) = 0$.
\end{itemize}
It would be interesting to have a purely topological proof of even
this special case.  This seems entirely out of reach.

\subsection{Putman--Wieland conjecture}

For $g \geq 3$,
Ivanov \cite{IvanovQuestions} asked whether every finite-index subgroup of $\Mod_{g,n}$ has
first Betti number $0$.  With Wieland \cite{PutmanWieland}, the author proved that
this is roughly equivalent\footnote{The precise statement involves various shifts in $g$ and $n$, and
is carefully stated in the introduction of \cite{PutmanWieland}.} 
to what has become known as the Putman--Wieland conjecture.  
Since the precise statement of this conjecture is not needed for our paper,
we will only give an impressionistic version of it:
\begin{itemize}
\item Fix $g \geq 2$.  Let $p\colon \tSigma \rightarrow \Sigma_g$ be a finite regular branched cover and
let $v \in \HH_1(\tSigma;\Q)$ be nonzero.  Let $\Gamma$ be the subgroup of the mapping
class group of $\tSigma$ consisting of mapping classes that can be represented by
diffeomorphisms that normalize the deck group of $p\colon \tSigma \rightarrow \Sigma_g$.
Then the $\Gamma$-orbit of $v$ is infinite.
\end{itemize}
Markovi\'{c} \cite{MarkovicCounterexample} constructed a counterexample to the Putman--Wieland conjecture when $g=2$, but
it remains open for $g \geq 3$.  It turns out
to be connected to many other parts of mathematics and
has acquired a substantial literature.
See, for instance, \cite{BoggiPutmanSalter, GrunewaldLarsenLubotzkyMalestein, KlukowskiMarkovic, 
KrophollerFibering, LandesmanLitt1, LandesmanLitt2, LandesmanLitt3, LandesmanLittSawin, LittSurvey, LooijengaArithmetic, MarkovicTosic}.

\subsection{Relationship to Boggi's theorem}

The proof of the theorem in \cite{PutmanWieland} connecting the Putman--Wieland conjecture
to the vanishing of the virtual first Betti number of $\Mod_{g,n}$ uses
Theorem \ref{maintheorem:boggi} in an essential way.  This raises an obvious issue: the proof of
Theorem \ref{maintheorem:boggi} in \cite{BoggiCongruence} is part of a flawed
argument, and is also written for a reader who is fluent in mixed
Hodge theory.  Even for a reader who has the necessary background, it
is not easy to extract the proof from \cite{BoggiCongruence}.  

\subsection{Goal and audience of the paper}
Many people have told the author that it would be helpful to have a careful exposition
of Boggi's proof of Theorem \ref{maintheorem:boggi}.
Our goal is to give a self-contained account of the proof of Theorem \ref{maintheorem:boggi}
that is accessible to geometric topologists.  We envision our
reader as having roughly the background we did back in 2009--2010 when
we were working with Wieland on \cite{PutmanWieland}: 
expertise in the mapping class group and Teichm\"{u}ller theory,
but only a rudimentary knowledge of complex algebraic and analytic
geometry.

\begin{remark}
Topologists who learned about the moduli space of curves via Teichm\"{u}ller theory
typically think about it complex analytically.  As much as possible, we will give
definitions and constructions from this perspective.
However, the proof of Theorem \ref{maintheorem:boggi}
uses tools that only exist in the setting of algebraic (as opposed to analytic) geometry.
It will therefore be important for us that the spaces we construct
are actually algebraic varieties.  
\end{remark}

\begin{remark}
Since our goal is to make this paper accessible to topologists,
we include a substantial amount of background information.  In
particular, we will need many results about the moduli space of curves that are
well-known to experts, but for which we do not know appropriate
references that include complete proofs.  
We hope a reader with a more serious background
in algebraic geometry will pardon us for explaining so many basic
things.
\end{remark}

\subsection{Outline}

We start with two sections of preliminaries: in
\S \ref{section:mixedhodge} we survey
some basic facts about mixed Hodge theory,
and in \S \ref{section:normal} we survey some basic
facts about normalizations and the relationship between
algebraic and analytic geometry.
We then introduce the moduli space of curves and
its finite covers in \S \ref{section:modulicurves} -- \S \ref{section:modulicover}.
After that, we study the Deligne--Mumford compactification
in \S \ref{section:delignemumford} -- \S \ref{section:compactcover}.
We finally prove Theorem \ref{maintheorem:boggi} in \S \ref{section:boggiproof}.  

\subsection{Acknowledgements}

I would like to thank Autumn Kent, who first told me about Boggi's work at MSRI in 2007.
It was immediately clear to me that it should have implications for the virtual first
Betti number of $\Mod_{g,n}$.  I would also like to thank Ben Wieland.
In addition to collaborating with me to figure
out what those implications were, he spent a huge amount of time
explaining Boggi's arguments to me.  Any understanding I have of them comes
from him.  

I would finally like to thank the people who
have encouraged me to write an exposition of Boggi's proof of Theorem \ref{maintheorem:boggi}.  In
particular, Henry Wilton provided crucial encouragement in the
summer of 2026.

\subsection{Conventions}
Throughout this paper, all algebraic varieties are defined over $\C$.  We will
use the classical language of complex algebraic and analytic geometry rather than scheme
theory.  We will also frequently not distinguish between a simple closed curve on
a topological surface and its isotopy class, or between an orientation-preserving diffeomorphism
of such a surface and the mapping class it represents.

\section{The weight filtration}
\label{section:mixedhodge}

The proof of Theorem \ref{maintheorem:boggi} uses mixed Hodge theory, which
was introduced by Deligne in his classic papers \cite{DeligneI, DeligneII, DeligneIII}.
Only a small fragment of this theory is needed, namely the {\em weight filtration} on the cohomology
of a variety.  This section explains how the weight filtration works and
gives some simple examples of it.
This filtration is one of the reasons that the cohomology of an algebraic
variety is special compared to a general space or manifold:
\begin{itemize}
\item Many cohomology classes on a variety have a clear geometric origin, and maps between varieties
tend to respect that geometry in a rigid way.
\item Spectral sequences involving the cohomology of varieties often degenerate
at $E_2$.  We will sketch an example of how the weight filtration can be
used to prove such a degeneration result in \S \ref{section:simplenormalcrossing} below.
\end{itemize}
For proofs of everything we say, in addition to Deligne's
original papers we recommend \cite{PetersSteenbrink, GriffithsSchmid}.  We also recommend
the useful short survey \cite{Durfee}.

\begin{warning}
The weight filtration is structure on the cohomology of complex algebraic varieties, not complex
analytic varieties.  See Example \ref{example:badweight} below for an easy example illustrating
the necessity of working in the algebro-geometric setting.
\end{warning}

\begin{convention}
For a complex algebraic variety $X$, we will write $\HH^k(X;\Q)$ for the $k^{\text{th}}$
singular cohomology of $X$ endowed with its classical analytic topology (as opposed
to the Zariski topology).  Later in this paper it will be important to distinguish
the complex algebraic variety $X$ from its analytification, which is a complex
analytic variety that is denoted $X^{\an}$.  If $X$ is smooth, then $X^{\an}$
is a complex manifold.
\end{convention}

\subsection{Basic facts about the weight filtration}
\label{section:weightintro}

Let $X$ be a quasiprojective variety, that is, a Zariski open subspace of a projective variety.
Fix some $k \geq 0$, and let $H = \HH^k(X;\Q)$.
The weight filtration on $H$ is an increasing filtration
\[\begin{tikzcd}[column sep=tiny, cramped]
0 \arrow[r, phantom, "="] & W_{-1} H \arrow[r, phantom, "\subset"] & W_0 H \arrow[r, phantom, "\subset"] & \cdots \arrow[r, phantom, "\subset"] & W_{2k} H \arrow[r, phantom, "="] & H.
\end{tikzcd}\]
The associated graded of the weight filtration will be written
\[\Gr_q^W H = (W_q H) / (W_{q-1} H).\]
Roughly speaking, the weight filtration encodes the geometric origins of various cohomology classes:
some come from the smooth locus, some come from the singularities, and some come from the topology
at infinity.  Here are three key properties of it:
\begin{itemize}
\item[(I)] If $X$ is a smooth projective variety, then the weight filtration on $H = \HH^k(X;\Q)$
takes the simple form
\[\begin{tikzcd}[column sep=tiny, cramped]
0 \arrow[r, phantom, "="] & W_{k-1} H \arrow[r, phantom, "\subset"] & W_k H \arrow[r, phantom, "="] & H.
\end{tikzcd}\]
We say that $\HH^k(X;\Q)$ is {\em pure of weight $k$}.
\item[(II)] If $X$ is projective but singular, then the weight filtration on $H = \HH^k(X;\Q)$
takes the form
\[\begin{tikzcd}[column sep=tiny, cramped]
0 \arrow[r, phantom, "="] & W_{-1} H \arrow[r, phantom, "\subset"] & W_0 H \arrow[r, phantom, "\subset"] & \cdots \arrow[r, phantom, "\subset"] & W_{k} H \arrow[r, phantom, "="] & H.
\end{tikzcd}\]
Elements of $\Gr_q^W H$ for $q<k$ reflect the structure of the singularities.
\item[(III)] If $X$ is smooth but only quasiprojective, then the weight filtration on $H = \HH^k(X;\Q)$
takes the form
\[\begin{tikzcd}[column sep=tiny, cramped]
0 \arrow[r, phantom, "="] & W_{k-1} H \arrow[r, phantom, "\subset"] & W_k H \arrow[r, phantom, "\subset"] & \cdots \arrow[r, phantom, "\subset"] & W_{2k} H \arrow[r, phantom, "="] & H.
\end{tikzcd}\]
Elements of $\Gr_q^W H$ for $q>k$ reflect the geometry at infinity.
\end{itemize}
We will give some examples below after discussing the strong sense in which
the weight filtration is preserved by algebraic maps.

\subsection{Strict preservation of the weight filtration}
Let $X$ and $Y$ be quasiprojective varieties and let $f\colon X \rightarrow Y$ be an algebraic
map.  The induced map $f^{\ast}\colon \HH^k(Y;\Q) \rightarrow \HH^k(X;\Q)$ preserves the weight
filtrations in the sense that for all $0 \leq q \leq 2k$ we have
\[f^{\ast}(W_q \HH^k(Y;\Q)) \subset W_q \HH^k(X;\Q).\]
In fact, even more is true: the weight filtration is {\em strictly preserved}, i.e., for
all $0 \leq q \leq 2k$ we have
\[f^{\ast}(W_q \HH^k(Y;\Q)) = \Image(f^{\ast}) \cap W_q \HH^k(X;\Q).\]
This has the following consequence.  Say that a subspace $V$ of $\HH^k(X;\Q)$
is {\em pure of weight $q$} if 
\[V \subset W_q \HH^k(X;\Q) \quad \text{and} \quad V \cap W_{q-1} \HH^k(X;\Q) = 0.\]
We then have:

\begin{lemma}
\label{lemma:strictpreserve}
Let $X$ be a quasiprojective variety, let $Y$ be a smooth projective variety, and
let $f\colon X \rightarrow Y$ be an algebraic map.  Then 
the image of the map 
$f^{\ast}\colon \HH^k(Y;\Q) \rightarrow \HH^k(X;\Q)$
is pure of weight $k$.
\end{lemma}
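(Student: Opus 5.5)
We will combine property (I) with the strict preservation of the weight filtration just stated. Write $H_Y = \HH^k(Y;\Q)$ and $H_X = \HH^k(X;\Q)$, and let $V = f^{\ast}(H_Y) = \Image(f^{\ast})$. To show that $V$ is pure of weight $k$, we must verify two things: $V \subset W_k H_X$, and $V \cap W_{k-1} H_X = 0$.

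Both containments come from applying strictness at the two relevant indices, after reading off the weight filtration on $H_Y$ from (I). Since $Y$ is smooth projective, (I) gives
\[W_{k-1} H_Y = 0 \quad \text{and} \quad W_k H_Y = H_Y.\]
\begin{itemize}
\item Taking $q = k$ in the strictness identity gives
\[V = f^{\ast}(H_Y) = f^{\ast}(W_k H_Y) = V \cap W_k H_X,\]
so $V \subset W_k H_X$. The mere fact that $f^{\ast}$ preserves weights would already suffice for this step.
\item Taking $q = k-1$ gives
\[V \cap W_{k-1} H_X = f^{\ast}(W_{k-1} H_Y) = f^{\ast}(0) = 0.\]
\end{itemize}
Together these are exactly the definition of $V$ being pure of weight $k$.

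There is no real obstacle once (I) and strictness are granted. The only point needing care is the edge case $k = 0$. There $q = k-1 = -1$ lies outside the range $0 \leq q \leq 2k$ in which strictness was stated. In that case we instead use the convention $W_{-1} = 0$ on both sides, which makes $V \cap W_{-1} H_X = 0$ trivially true. The substantive input, namely strictness, is Deligne's theorem, which we take as given from the preceding discussion.
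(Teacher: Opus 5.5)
Your proof is correct and matches the paper's argument: the paper also uses strictness at $q = k-1$ together with $W_{k-1}\HH^k(Y;\Q) = 0$ from property (I) to show that any $f^{\ast}(v) \in W_{k-1}\HH^k(X;\Q)$ must vanish. You also spell out the containment $\Image(f^{\ast}) \subset W_k \HH^k(X;\Q)$ and the $k=0$ edge case, both of which the paper leaves implicit.
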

\begin{proof}
For some $v \in \HH^k(Y;\Q)$, assume that $f^{\ast}(v) \in W_{k-1} \HH^k(X;\Q)$.  We must prove
that $f^{\ast}(v)=0$.  To see this, note that since the weight filtration is strictly preserved
there must exist some $v' \in W_{k-1} \HH^k(Y;\Q)$ such that $f^{\ast}(v') = f^{\ast}(v)$.
Since $\HH^k(Y;\Q)$ is pure of weight $k$, we must have $v' = 0$.  The lemma follows.
\end{proof}

\begin{remark}
Other kinds of geometrically defined maps between the cohomology groups of 
quasiprojective varieties tend to
strictly preserve the weight filtration, though often with a weight shift encoded
in what is called a ``Tate twist''.  For instance, this holds for boundary maps
in long exact sequences, Gysin maps, etc.  We will not delve too deeply into
this, but such results are a key input to deeper applications of mixed
Hodge theory.
\end{remark}

\subsection{Examples}
We now give some simple examples illustrating how the weight filtration works.

\begin{example}
\label{example:p1minus}
Fix $n \geq 2$.  Let $p_1,\ldots,p_n \in \bbP^1$ be distinct points, and consider
$X = \bbP^1 \setminus \{p_1,\ldots,p_n\}$.  We have $\HH^1(X;\Q) \cong \Q^{n-1}$, and
it turns out that the weight filtration on $H = \HH^1(X;\Q)$ is
\[\begin{tikzcd}[column sep=tiny, cramped]
0 \arrow[r, phantom, "="] & W_{1} H \arrow[r, phantom, "\subset"] & W_2 H \arrow[r, phantom, "="] & H.
\end{tikzcd}\]
In other words, $\HH^1(X;\Q)$ is pure of weight $2$.  
As an important special case, $\HH^1(\C^{\times};\Q) = \Q$ is pure of weight $2$.
Elements of $\HH^1(X;\Q) \cong \Q^{n-1}$ reflect what is happening ``at infinity'', i.e.,
near $\bbP^1 \setminus X = \{p_1,\ldots,p_n\}$.  That $\HH^1(X;\Q)$ is pure of weight
$2$ therefore illustrates (III) from \S \ref{section:weightintro}.
\end{example}

\begin{example}
\label{example:curveminus}
Fix $n \geq 1$ and $g \geq 1$.  Let $C$ be a smooth genus-$g$ projective curve.  As a topological space,
$C$ is homeomorphic to $\Sigma_g$.  Since $C$ is smooth and projective,
$\HH^1(C;\Q) \cong \Q^{2g}$ is pure of weight $1$.  Let $p_1,\ldots,p_n \in C$ be distinct points, and
consider $X = C \setminus \{p_1,\ldots,p_n\}$.  We have $\HH^1(X;\Q) \cong \Q^{2g+n-1}$, and
$\HH^1(X;\Q)$ fits into a short exact sequence
\[0 \rightarrow \HH^1(C;\Q) \rightarrow \HH^1(X;\Q) \rightarrow \Q^{n-1} \rightarrow 0\]
whose first map is the one induced by the inclusion $X \hookrightarrow C$.  Since
$C$ is a smooth projective variety, Lemma \ref{lemma:strictpreserve} implies that
the image of $\HH^1(C;\Q)$ in $\HH^1(X;\Q)$ is pure of weight $1$.  It turns
out that the weight filtration of $H = \HH^1(X;\Q)$ is
\[\begin{tikzcd}[column sep=tiny, cramped]
0 \arrow[r, phantom, "="] & W_{0} H \arrow[r, phantom, "\subset"] & W_1 H \arrow[r, phantom, "\subset"] & W_2 H \arrow[r, phantom, "="] & H 
\end{tikzcd}\]
with $\Gr_1^W H \cong \HH^1(C;\Q)$ and $\Gr_2^W H \cong \Q^{n-1}$.
This also illustrates (III) from \S \ref{section:weightintro}.
\end{example}

\begin{example}
\label{example:weight3}
Let $X$ be a projective curve of the form $X = X_1 \cup X_2$, where:
\begin{itemize}
\item $X_1$ is a smooth genus-$g$ algebraic curve with $g \geq 1$; and
\item $X_2$ is isomorphic to $\bbP^1$; and
\item $X_1$ and $X_2$ intersect transversely in two points $p$ and $q$.
\end{itemize}
This is illustrated here:
\Figure{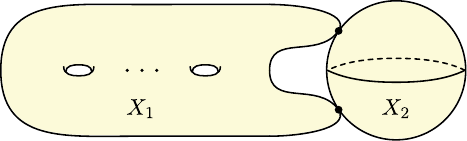}
We have
\begin{align*}
\HH^1(X_1;\Q) \cong &\Q^{2g},\\
\HH^1(X_2;\Q) \cong &0,\\
\RH^0(X_1 \cap X_2;\Q) = &\RH^0(\{p,q\};\Q) \cong \Q.
\end{align*}
The cohomology group $\HH^1(X;\Q) \cong \Q^{2g+1}$ fits into a Mayer--Vietoris exact sequence
\[\begin{tikzcd}
0 \arrow{r} & \RH^0(X_1 \cap X_2;\Q) \arrow{r} & \HH^1(X;\Q) \arrow{r} & \HH^1(X_1;\Q) \oplus \HH^1(X_2;\Q) \arrow{r} & 0.
\end{tikzcd}\]
Here $\RH^0(X_1 \cap X_2;\Q)$ and $\HH^1(X_1;\Q)$ are pure of weights $0$ and $1$, respectively.
It turns out that all these maps strictly preserve the weight filtrations, so
the weight filtration of $H=\HH^1(X;\Q)$ is
\[\begin{tikzcd}[column sep=tiny, cramped]
0 \arrow[r, phantom, "="] & W_{-1} H \arrow[r, phantom, "\subset"] & W_0 H \arrow[r, phantom, "\subset"] & W_1 H \arrow[r, phantom, "="] & H 
\end{tikzcd}\]
with $\Gr_0^W H \cong \RH^0(X_1 \cap X_2;\Q)$ and $\Gr_1^W H \cong \HH^1(X_1;\Q)$.
This illustrates (II) from \S \ref{section:weightintro}.
\end{example}

\begin{example}
\label{example:badweight}
Using the examples above, we can illustrate how important it is that
we are working in the algebraic category rather than the analytic one.
Let $E$ be an elliptic curve.  As a complex analytic
variety, we can identify $E$ with  $\C/\Lambda$ for a lattice $\Lambda$.  Let
$\pi\colon \C \rightarrow E$ be the projection.
Pick some nonzero $\lambda \in \Lambda$, and
define an analytic map $f\colon \C^{\times} \rightarrow E$ via
\[f(e^z) = \pi(\frac{\lambda}{2\pi i} z) \quad \text{for $z \in \C$}.\]
The map $f^{\ast}\colon \HH^1(E;\Q) \rightarrow \HH^1(\C^{\times};\Q)$ is nonzero; however,
$\HH^1(E;\Q)$ is pure of weight $1$ since $E$ is a smooth projective variety and $\HH^1(\C^{\times};\Q)$
is pure of weight $2$ (Example \ref{example:p1minus}).  It
follows that $f^{\ast}$ does {\em not} preserve the weight filtration.
\end{example}

\subsection{Weight filtration on a simple normal crossing variety}
\label{section:simplenormalcrossing}

For our proof of Theorem \ref{maintheorem:boggi}, we will need a generalization
of Example \ref{example:weight3}.  A {\em projective simple normal crossing variety}
of dimension $d$ is a projective variety $X$ such that for some $n \geq 1$ we have the following:
\begin{itemize}
\item $X = X_1 \cup \cdots \cup X_n$, where the $X_i$ are distinct 
smooth irreducible $d$-dimensional projective varieties.
The $X_i$ are called the {\em components} of $X$.
\item The $X_i$ intersect transversely in the following sense.  Consider some $x \in X$.  Let
$i_0 < \cdots < i_p$ be the indices such that $x \in X_{i_j}$.  Then $0 \leq p \leq d$, and
there exist complex analytic neighborhoods $U$ of $x$ in $X$ and $V$ of the origin in
\[\Set{$(z_0,\ldots,z_{d}) \in \C^{d+1}$}{$z_0 \cdots z_p = 0$}\]
and a complex analytic isomorphism $U \cong V$.  For $0 \leq j \leq p$, this
isomorphism takes $X_{i_j} \cap U$ to
$\Set{$(z_0,\ldots,z_d) \in V$}{$z_j = 0$}$.
\end{itemize}
Let $X$ be a projective simple normal crossing variety with components
$X_1,\ldots,X_n$.  Let
\[X^{[p]} = \bigsqcup_{i_0 < \cdots < i_p} X_{i_0} \cap \cdots \cap X_{i_p} \quad \text{for $p \geq 0$}.\]
Since the $X_i$ intersect transversely, this is a (possibly empty) disjoint union of smooth projective
varieties of dimension $d-p$.  In particular, $X^{[p]} = \emptyset$ for
$p>d$.  We can compute the cohomology of $X$ using the
Mayer--Vietoris spectral sequence
\[E^{pq}_1 = \HH^q(X^{[p]};\Q) \Rightarrow \HH^{p+q}(X;\Q).\]
Each $\HH^q(X^{[p]};\Q)$ is pure of weight $q$.  The $d_1$-differentials
are of the form
\[d_1\colon \HH^q(X^{[p]};\Q) \rightarrow \HH^q(X^{[p+1]};\Q).\]
This differential is the alternating sum of restriction maps associated to
inclusions of intersections.  Since the domain and codomain here are both pure of weight $q$, it is possible
for the $d_1$-differentials to be nonzero.  

However, the higher differentials
$d_r\colon E^{pq}_r \rightarrow E^{p+r,q-r+1}_r$ 
connect terms that are pure of different weights.  Using this,\footnote{Plus some
additional foundational work -- Deligne sets up an abelian category of mixed Hodge structures
and proves that this is a spectral sequence of mixed Hodge structures.} Deligne proved
that the above spectral sequence degenerates at $E_2$.  
For each $k \geq 0$, the above spectral sequence computes the weight filtration of $\HH^k(X;\Q)$
in the following sense:
\[\Gr_q^W \HH^k(X;\Q) \cong E^{k-q,q}_{\infty} = E^{k-q,q}_2.\]

\subsection{Dual complex of a simple normal crossing variety}
\label{section:dualcomplex}

Continue to let $X$ be a projective simple normal crossing variety
with irreducible components $X_1,\ldots,X_n$.  The {\em dual complex} of $X$, denoted $\Delta(X)$,
is the semisimplicial set whose set of $p$-simplices is $\pi_0(X^{[p]})$.  In other
words, the $p$-simplices of $\Delta(X)$ are the connected components of $X^{[p]}$
and the face maps correspond to inclusions of connected components.

In the spectral sequence 
\[E^{pq}_1 = \HH^q(X^{[p]};\Q) \Rightarrow \HH^{p+q}(X;\Q),\]
the $q=0$ row is exactly the semisimplicial cochain complex for $\Delta(X)$.  Since
this spectral sequence degenerates at $E_2$ and computes the weight
filtration on each $\HH^k(X;\Q)$, we deduce:

\begin{theorem}
\label{theorem:weight0}
Let $X$ be a projective simple normal crossing variety.  For each $k \geq 0$,
we have $W_0 \HH^k(X;\Q) \cong \HH^k(\Delta(X);\Q)$.
\end{theorem}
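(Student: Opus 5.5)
The plan is to read $W_0$ off the Mayer--Vietoris spectral sequence
\[E^{pq}_1 = \HH^q(X^{[p]};\Q) \Rightarrow \HH^{p+q}(X;\Q)\]
from \S\ref{section:simplenormalcrossing}, using the two facts recorded there: it degenerates at $E_2$, and $\Gr_q^W \HH^k(X;\Q) \cong E^{k-q,q}_2$. The statement then reduces to identifying the $q=0$ row of $E_2$.

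First I compute $W_0$ in terms of the graded pieces. By property (II) the weight filtration on $\HH^k(X;\Q)$ starts at $W_{-1}=0$, because $X$ is projective. Hence $W_0 \HH^k(X;\Q) = \Gr_0^W \HH^k(X;\Q)$. The degeneration statement then gives $W_0 \HH^k(X;\Q) \cong E^{k,0}_2$.

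Next I identify the $q=0$ row with cochains on $\Delta(X)$. Since $X^{[p]}$ is a disjoint union of smooth projective varieties, $\HH^0(X^{[p]};\Q)$ is the space of $\Q$-valued functions on $\pi_0(X^{[p]})$. This is exactly the space of simplicial $p$-cochains of $\Delta(X)$. The differential $d_1$ is the alternating sum of restriction maps along the inclusions $X^{[p+1]} \hookrightarrow X^{[p]}$ coming from omitting an index. On $\HH^0$, restriction from a component to a component of a deeper intersection is pullback of locally constant functions. Hence $d_1$ on the $q=0$ row agrees with the semisimplicial coboundary $\sum_j (-1)^j \partial_j^*$ of $\Delta(X)$, where $\partial_j$ is the face map sending a component of $X_{i_0}\cap\cdots\cap X_{i_p}$ to the component of $X_{i_0}\cap\cdots\widehat{X_{i_j}}\cdots\cap X_{i_p}$ containing it. It follows that $E^{k,0}_2 = \HH^k(\Delta(X);\Q)$. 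Strictly, the semisimplicial cochain complex computes the cohomology of the geometric realization of $\Delta(X)$, which is the standard fact for semisimplicial sets.

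The main obstacle is checking that the $q=0$ row really is the cochain complex of $\Delta(X)$ with matching signs and face maps. This depends on the ordering convention $i_0<\cdots<i_p$ used to build the spectral sequence, together with the fact that $\Delta(X)$ is defined so that its face maps are exactly these component inclusions. I would handle this by building the spectral sequence from the Čech-type resolution
\[\Q_X \to \textstyle\bigoplus_i \Q_{X_i} \to \bigoplus_{i_0<i_1}\Q_{X_{i_0}\cap X_{i_1}} \to \cdots,\]
which is exact by the local normal crossing model. Its sign conventions are then visibly those of semisimplicial cochains.

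The real depth is the $E_2$-degeneration and the identification $\Gr_q^W \cong E_2^{k-q,q}$, which I take as given from Deligne. For completeness I would add one remark: the edge map $\HH^k(X;\Q) \to E^{k,0}_\infty$ goes the wrong way for a subspace. The row $q=0$ is the bottom row, so $E^{k,0}_\infty$ is a subobject of $\HH^k(X;\Q)$, namely the image of $\HH^k(\Delta(X);\Q)$, which is exactly $W_0$. This matches the filtration indexing, so the result is an isomorphism onto $W_0$ and not merely onto a quotient.
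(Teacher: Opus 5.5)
Your proposal is correct and is essentially the paper's own argument. The $q=0$ row of the Mayer--Vietoris spectral sequence is the semisimplicial cochain complex of $\Delta(X)$. Combining $E_2$-degeneration, $\Gr_q^W \HH^k(X;\Q) \cong E_2^{k-q,q}$, and $W_{-1}=0$ (property (II)) gives $W_0 \HH^k(X;\Q) = \Gr_0^W \HH^k(X;\Q) \cong E_2^{k,0} = \HH^k(\Delta(X);\Q)$. Your extra remarks (the \v{C}ech-type resolution fixing the signs, and $E_\infty^{k,0}$ being the bottom filtration piece, which coincides with $W_0$) make explicit what the paper leaves implicit.
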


\begin{remark}
This implies that $\HH^k(\Delta(X);\Q)$ is isomorphic to a subspace of
$\HH^k(X;\Q)$, which is not obvious at all.
\end{remark}

\begin{example}
We revisit Example \ref{example:weight3} from this point of view.
In that example, $X$ is a projective curve of the form $X = X_1 \cup X_2$, where:
\begin{itemize}
\item $X_1$ is a smooth genus-$g$ algebraic curve with $g \geq 1$; and
\item $X_2$ is isomorphic to $\bbP^1$; and
\item $X_1$ and $X_2$ intersect transversely in two points $p$ and $q$.
\end{itemize}
The curve $X$ is a simple normal crossing variety with irreducible components $X_1$ and $X_2$.
Its dual complex $\Delta(X)$ is the semisimplicial set with:
\begin{itemize}
\item two vertices $X_1$ and $X_2$; and
\item two edges, namely the components $p$ and $q$ of $X_1 \cap X_2$.  
\end{itemize}
Both edges connect $X_1$ to $X_2$, so
the geometric realization of $\Delta(X)$ is a circle.\footnote{Since both edges connect the same two vertices, the
semisimplicial set $\Delta(X)$ is not a simplicial complex.}  By Theorem \ref{theorem:weight0}, we have
\[W_0 \HH^1(X;\Q) \cong \HH^1(\Delta(X);\Q) \cong \HH^1(\bbS^1;\Q) = \Q,\]
just like we computed in Example \ref{example:weight3}.  
\end{example}

\section{Some algebraic geometry: normalization and algebraization}
\label{section:normal}

This section covers some preliminary results about algebraic geometry.
The two main topics are normalizations of varieties and the relationship
between algebraic and analytic varieties.
See \cite[Chapter III, \S 8]{MumfordRed} for a textbook introduction to
normalization, and see \cite[Appendix B]{Hartshorne} for a short textbook introduction to
the relationship between algebraic and analytic varieties.\footnote{One result we
will discuss is the generalized Riemann existence theorem.  Be warned
that the statement of this in
\cite[Appendix B]{Hartshorne} is missing the important hypothesis that
the finite map is a finite unbranched cover.  See Theorem \ref{theorem:generalizedriemann}
below for the correct statement.}

\begin{remark}
In this section, we will be careful to distinguish in our notation a complex
algebraic variety $X$ from its analytification $X^{\an}$.  We will
use letters like $X,Y,\ldots$ for algebraic varieties and
letters like $M,N,\ldots$ for analytic varieties. 
\end{remark}

\subsection{Algebraic vs analytic properties}
\label{section:algebravsanalysis}

Let $X$ be an algebraic variety.  Many standard properties of varieties
hold for $X$ if and only if they hold for its analytification $X^{\an}$.
Here are some examples:
\begin{itemize}
\item[(i)] $X$ is connected if and only if $X^{\an}$ is connected \cite[Expos\'{e} XII, Proposition 2.4]{SGA1}.
\item[(ii)] $X$ is irreducible if and only if $X^{\an}$ is irreducible \cite[Expos\'{e} XII, Proposition 2.4]{SGA1}.
\item[(iii)] $X$ is smooth if and only if $X^{\an}$ is smooth \cite[Expos\'{e} XII, Proposition 3.1]{SGA1}.
\end{itemize}
In some settings, there is little difference between the algebraic and analytic categories.
Here is one result of this nature.
An analytic variety $M$ is said to be {\em proper} if it is compact, and an algebraic
variety $X$ is proper if $X^{\an}$ is proper.\footnote{For the equivalence between this definition
and the usual algebraic definition of being proper, see \cite[Expos\'{e} XII, Proposition 3.2]{SGA1}.}
For instance, projective varieties are proper.  We then have:
\begin{itemize}
\item If $X$ and $Y$ are proper algebraic varieties, then a map $X \rightarrow Y$ is algebraic
if and only if the corresponding map $X^{\an} \rightarrow Y^{\an}$ is analytic \cite[Expos\'{e} XII, Corollaire 4.5]{SGA1}.  
\end{itemize}
This is false in the non-proper setting; for instance, the analytic map $f\colon \C \rightarrow \C$
defined by $f(z) = e^z$ is not algebraic.  We must be careful to avoid such pathologies.

\subsection{Normal varieties}
\label{section:normaldef}

Our first main topic in this section is a weakening of being smooth called being
normal.

Let $M$ be a complex analytic variety,
let $U \subset M$ be an open set, and let $A \subsetneq U$ be a closed analytic
subvariety.  An analytic function $f\colon U \setminus A \rightarrow \C$
is {\em locally bounded} near $A$ if for all $a \in A$, there exists an open neighborhood
$V \subset U$ of $a$ such that $f|_{V \setminus A}$ is bounded.  The generalized
Riemann extension theorem says that if $U$ is smooth, then all analytic
functions $f\colon U \setminus A \rightarrow \C$ that are locally bounded near $A$ extend
to analytic functions on $U$.  

An irreducible analytic variety $M$ is said to be normal if it
satisfies the conclusion of the Riemann extension theorem.  More generally:

\begin{definition}
An analytic variety $M$ is {\em normal} if all connected components of $M$
are irreducible and for all connected open sets $U \subset M$ and all closed
analytic subvarieties $A \subsetneq U$
we have the following:
\begin{itemize}
\item All analytic functions $f\colon U \setminus A \rightarrow \C$ that are
locally bounded near $A$ extend to analytic functions on $U$.
\end{itemize}
An algebraic variety $X$ is {\em normal} if its analytification $X^{\an}$ is normal.
\end{definition}

\begin{remark}
The usual definition of normality is much more algebraic.  See \cite[Chapter II, \S 7]{Demailly}
for a proof that it is equivalent to the above definition in the analytic
category, and \cite[Expos\'{e} XII, Proposition 2.1]{SGA1} for a proof that
it is equivalent to the above definition in the algebraic category.
\end{remark}

The following examples illustrate the fact that a variety is normal if
its singularities are not too extreme:

\begin{example}
The generalized Riemann extension theorem says that smooth algebraic and analytic
varieties are normal.
\end{example}

\begin{example}
\label{example:notnormal}
Let $X = \Set{$(x,y) \in \C^2$}{$y^2=x^2(x+1)$}$.  Then $X$ is not normal.  Indeed,
the analytic function $f\colon X \setminus 0 \rightarrow \C$ defined
by
\[f(x,y) = y/x \quad \text{for $(x,y) \in X \setminus 0$}\]
is locally bounded near $0$ but does not extend continuously over $0$ since
for a small real $t \neq 0$ we have the points $(t,\pm t\sqrt{1+t})$ of $X \setminus 0$, and
\[\lim_{t \rightarrow 0} f(t,\pm t\sqrt{1+t}) = \lim_{t \rightarrow 0} \pm \sqrt{1+t} = \pm 1.\]
It follows that $X$ is not normal.
\end{example}

An algebraic or analytic curve is normal if and only if
it is smooth.  In higher dimensions, a normal variety can have
singularities.  One source of such examples is the following:

\begin{lemma}
\label{lemma:normalquotient}
Let $M$ be a normal analytic variety and let $G$ be a finite group acting
on $M$ by analytic automorphisms.  Then $M/G$ is normal.
\end{lemma}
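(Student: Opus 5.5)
Write $\pi\colon M \rightarrow M/G$ for the quotient map, and give $M/G$ the structure of an analytic variety in which a function on an open set $V \subset M/G$ is analytic exactly when its pullback to $\pi^{-1}(V)$ is analytic and $G$-invariant. That this really is an analytic variety is Cartan's theorem on quotients by finite groups; I will take it as the definition of the analytic structure. With this in place, normality should follow by averaging over $G$, using the normality of $M$ upstairs.

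There are two conditions to check. For irreducibility of components, take a connected component $C$ of $M/G$. Its preimage $\pi^{-1}(C)$ is a union of connected components of $M$, and these are permuted transitively by $G$, since $C$ is connected and $\pi$ is open and closed. Pick one such component $M_0$. It is irreducible because $M$ is normal, and $\pi|_{M_0}\colon M_0 \rightarrow C$ is surjective. If $C = C_1 \cup C_2$ with the $C_i$ closed analytic, then $M_0 = \pi^{-1}(C_1)\cap M_0 \cup \pi^{-1}(C_2)\cap M_0$, so some $\pi^{-1}(C_i) \supset M_0$ and hence $C_i = C$. So $C$ is irreducible.

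For the extension property, let $V \subset M/G$ be connected and open, let $A \subsetneq V$ be closed analytic, and let $f\colon V\setminus A \rightarrow \C$ be analytic and locally bounded near $A$. Set $U = \pi^{-1}(V)$ and $B = \pi^{-1}(A)$, which is closed analytic in $U$. The function $\tilde f = f\circ \pi$ is analytic on $U \setminus B$ and $G$-invariant. It is locally bounded near $B$: given $b \in B$, choose a neighborhood $W$ of $\pi(b)$ on which $f$ is bounded away from $A$, and then $\pi^{-1}(W)$ works. The definition of normality is stated for connected $U$, so I apply it on each connected component $U_j$ of $U$. Since $V$ is connected and $B$ is the full preimage of $A \neq V$, we have $B \cap U_j \subsetneq U_j$: otherwise $\pi(U_j) \subset A$, and transitivity of $G$ on the components of $U$ over $V$ would force $A = V$. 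Normality of $M$ then extends $\tilde f$ to an analytic function $F$ on $U$. This extension is $G$-invariant, because $F$ and $F\circ g$ agree on the dense set $U\setminus B$. The density holds because $B\cap U_j$ is a proper analytic subset of the irreducible $U_j$, hence nowhere dense. So $F$ descends to an analytic function on $V$ extending $f$.

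The main obstacle is foundational rather than computational. One has to know that $M/G$ is an analytic variety, and that its analytic functions and analytic subsets are exactly the $G$-invariant ones pulled back via $\pi$. In particular, we need that $\pi^{-1}(A)$ is analytic and that a $G$-invariant analytic function on $\pi^{-1}(V)$ descends to an analytic function on $V$. I would cite Cartan's quotient theorem for this rather than reprove it. With that in hand, the argument above is just averaging and density.
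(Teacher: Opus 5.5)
Your proof is correct and takes essentially the same route as the paper. You pull $f$ back to the $G$-invariant function $f\circ\pi$ on $\pi^{-1}(V)$ and extend it over each connected component using normality of $M$; the paper checks properness of $\pi^{-1}(A)$ in each component by noting that every component surjects onto $V$, where you use transitivity. You then use density to get $G$-invariance and descend, while stating explicitly the inputs the paper leaves implicit: Cartan's description of the analytic structure on $M/G$, local boundedness of the lift, and density of the complement of $\pi^{-1}(A)$.
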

\begin{proof}
Each connected component of $M/G$ is the image of a connected component of $M$.
Since the connected components of $M$ are irreducible, the connected components
of $M/G$ are also irreducible.
Let $p\colon M \rightarrow M/G$ be the projection.  Consider a connected open
set $U \subset M/G$, a closed analytic subvariety $A \subsetneq U$, and an
analytic function $f\colon U \setminus A \rightarrow \C$ that
is locally bounded near $A$.  Our goal is to extend $f$ to an analytic
function $F\colon U \rightarrow \C$.

Set $\tU = p^{-1}(U)$ and $\tA = p^{-1}(A)$.
Let $\tf\colon \tU \setminus \tA \rightarrow \C$ be
$\tf = f \circ p$.  Consider a component $\tU_i$ of $\tU$.  Since $p$ is both
an open and a closed map and $U$ is connected, $p(\tU_i) = U$.  Let 
$\tA_i = \tU_i \cap \tA$, so $\tA_i$ is a proper closed analytic subvariety of $\tU_i$.
Set $\tf_i = \tf|_{\tU_i \setminus \tA_i}$.
The function $\tf_i$ is locally bounded near $\tA_i$, so
since $M$ is normal the function $\tf_i$ extends to an analytic 
function $\tF_i\colon \tU_i \rightarrow \C$.  Applying this
to each component of $\tU$, we get an extension $\tF\colon \tU \rightarrow \C$ of
$\tf$.  The function $\tf$ is $G$-invariant,
so by continuity so is $\tF$.  It follows that $\tF$ descends to
an analytic function $F\colon U \rightarrow \C$ extending $f$.
\end{proof}

\begin{example}
\label{example:rpcone}
Let $C_2$ be the cyclic group of order $2$.  For
$n \geq 1$, let $C_2$ act on $\C^n$ as multiplication by $-1$.
Since $\C^n$ is smooth it is normal, so Lemma \ref{lemma:normalquotient} implies 
that $\C^n/C_2$ is normal.  The variety $\C^n/C_2$ is not smooth for $n \geq 2$
since as a topological space it is homeomorphic to the open cone on $\RP^{2n-1}$.
\end{example}

\begin{remark}
\label{remark:smoothcriterion}
Just like in Example \ref{example:rpcone}, let $M$ be a normal analytic variety and let $G$ be a finite group acting on
$M$ by analytic automorphisms.  Let $p\colon M \rightarrow M/G$ be the quotient.  Lemma \ref{lemma:normalquotient} says that
$M/G$ is normal.  Consider a point $x \in M$ at which $M$ is smooth.  How can we tell if $M/G$ is smooth at $p(x)$?

Let $G_x<G$ be the stabilizer of $x$ and $W = T_x M$ be the tangent space of $M$ at $x$, so $G_x \circlearrowright W$.  
Let $y \in W/G_x$ be the image of $0 \in W$.  
The point $p(x)$ has an analytic neighborhood that is isomorphic to a neighborhood of $y$,
so the question is equivalent to determining if $y$ is a smooth point of 
$W/G_x$.  

This has a simple answer: $y$ is a smooth point of $W/G_x$
if and only if the image of $G_x$ in $\GL(W)$ is generated by pseudo-reflections, that is, elements whose fixed
subspace has codimension $1$.  See Lemma \ref{lemma:notmanifold} and Remark \ref{remark:notmanifold} for more details.
This fact explains why we needed $n \geq 2$ in Example \ref{example:rpcone}.
\end{remark}

\subsection{Proper maps}

Our next goal is to discuss the process of normalization of an algebraic
variety, which requires some preliminary definitions.
If $f\colon M \rightarrow N$ is an analytic map between analytic varieties, then
$f$ is said to be {\em proper} if it is proper in the sense of point-set topology: preimages
of compact sets are compact.  An algebraic map $f\colon X \rightarrow Y$ between algebraic varieties
is {\em proper} if the induced map $f^{\an}\colon X^{\an} \rightarrow Y^{\an}$ is proper.

For an algebraic map $f\colon X \rightarrow Y$ between algebraic varieties, the usual definition
of being proper is that $f$ is universally closed.  This is a strengthening of being closed, that is,
taking Zariski closed subsets of $X$ to Zariski closed subsets of $Y$.  See
\cite[Expos\'e XII, Proposition 3.2]{SGA1} for a proof that it is equivalent to the definition
we gave.  To illustrate the meaning of our definition, we prove that it does imply
that $f$ is closed:

\begin{lemma}
\label{lemma:properclosed}
Let $f\colon X \rightarrow Y$ be a proper map between algebraic varieties and let $Z \subset X$
be a closed subvariety.  Then $f(Z) \subset Y$ is a closed subvariety.
\end{lemma}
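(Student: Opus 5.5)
The plan is to prove the statement by combining a topological closedness argument with Chevalley's theorem on constructible images. The point is that "closed subvariety" means Zariski closed. Properness, however, is defined analytically, so it gives us classical closedness. We therefore need a bridge from classical closedness to Zariski closedness for sets that are already algebraically well-behaved.

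First I show that $f(Z)^{\an}$ is closed in $Y^{\an}$ in the classical topology. The restriction $f|_Z\colon Z^{\an} \rightarrow Y^{\an}$ is still proper, since $Z^{\an}$ is closed in $X^{\an}$. A proper continuous map into a locally compact Hausdorff space is a closed map: if $y_i = f(z_i) \rightarrow y$, choose a compact neighborhood $K$ of $y$. Then the $z_i$ eventually lie in the compact set $f^{-1}(K) \cap Z^{\an}$, so a subsequence converges to some $z \in Z^{\an}$, and $f(z) = y$. Thus $f(Z)$ is closed in the classical topology.

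Second, I invoke Chevalley's theorem: the image $f(Z)$ of the algebraic map $f|_Z$ is a constructible subset of $Y$. That is, it is a finite union of sets of the form $V \cap U$, where $V$ is Zariski closed and $U$ is Zariski open.

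Third, I use the standard fact that a constructible set has the same closure in the Zariski and classical topologies. From this, $f(Z)$ equals its Zariski closure, which is a closed subvariety, as desired. To justify the fact, write the Zariski closure of $f(Z)$ as a union of irreducible components $V_1,\ldots,V_m$. For each $V_j$, the constructible set $f(Z)$ contains a nonempty Zariski open subset $U_j$ of $V_j$, since a constructible set whose Zariski closure contains the irreducible $V_j$ must contain a dense open subset of it. A nonempty Zariski open subset of an irreducible variety is dense in the classical topology; this is the genuinely analytic input. So the classical closure of $f(Z)$ contains each $V_j$, and hence equals the Zariski closure.

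I expect the main obstacle to be this last step, the density of Zariski open sets in the classical topology on an irreducible variety. It is standard (e.g.\ \cite[Expos\'e XII]{SGA1} or Mumford's Red Book), and I would cite it rather than reprove it. If a more self-contained route were desired, I would reduce to the case where $V_j$ is a smooth irreducible curve through a given point, where the statement is elementary. Chevalley's theorem would likewise be quoted.
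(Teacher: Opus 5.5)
Your proof is correct and matches the paper's argument. Chevalley's theorem gives constructibility of $f(Z)$. Point-set properness of $f^{\an}$ gives closedness in the analytic topology. A constructible set that is closed in the analytic topology is then Zariski closed. The only difference is that the paper cites this last fact as a standard result, while you sketch it via the density of nonempty Zariski open subsets of irreducible varieties in the classical topology, which is a valid justification.
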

\begin{proof}
A classical theorem of Chevalley (see, e.g., \cite[\S I.8, Corollary 2]{MumfordRed}) says
that $f(Z)$ is constructible, that is, a finite union of sets
of the form $U \cap C$ with $U$ Zariski open and $C$ Zariski closed.  
Since $f^{\an}\colon X^{\an} \rightarrow Y^{\an}$ is proper in the sense of point-set topology,
it takes closed sets to closed sets.  It follows that $f(Z)$ is closed in the analytic
topology on $Y$.  We can now appeal to a standard result
in algebraic geometry (see, e.g., \cite[Expos\'e XII, Corollary 2.3]{SGA1} or \cite[Theorem 5.4.4]{ArtinBook}) saying
that a constructible subspace of an algebraic variety that is closed in
the analytic topology is closed in the Zariski topology.
\end{proof}

\begin{remark}
It is also true that a proper map $f\colon M \rightarrow N$ between analytic
varieties takes analytic subvarieties to analytic subvarieties, but this
is a difficult theorem of Remmert.  See \cite[Theorem II.8.8]{Demailly}.
\end{remark}

\subsection{Finite maps}
\label{section:finitemaps}

If $f\colon M \rightarrow N$ is an analytic map between analytic varieties,
then $f$ is said to be {\em finite} if $f$ is proper and has finite fibers.
If $f$ is finite and surjective, it can be viewed as a finite ramified cover.
An algebraic map $f\colon X \rightarrow Y$ between algebraic varieties is {\em finite} if the induced map
$f^{\an}\colon X^{\an} \rightarrow Y^{\an}$ is finite.  Finite maps not only take
Zariski closed sets to Zariski closed sets, but also preserve dimension: if $Z \subset X$
is an irreducible closed subvariety, then $f(Z) \subset Y$ is also an irreducible
closed subvariety with $\dim Z = \dim f(Z)$ (see, e.g., \cite[\S 8, Theorem 3]{MumfordRed}).

\begin{remark}
The usual definition for a map between algebraic varieties to be finite is
purely algebraic.  See \cite[Expos\'e XII, Proposition 3.2]{SGA1} for
a proof that it is equivalent to the analytic definition above.
\end{remark}

\subsection{Normalization}
\label{section:normalization}

Let $X$ be an irreducible algebraic variety and let $U \subset X$
be a nonempty Zariski open subset.  Let $p\colon \tU \rightarrow U$ be a finite
surjective map with $\tU$ normal and irreducible.  A {\em normalization} of
$X$ with respect to $p\colon \tU \rightarrow U$ is an irreducible normal algebraic
variety $\tX$ containing $\tU$ as a Zariski open subset together
with an extension of $p$ to a finite map $p\colon \tX \rightarrow X$.

Normalizations always exist, and are unique in the sense that if
$p'\colon \tX' \rightarrow X$ is another normalization of $X$
with respect to $p\colon \tU \rightarrow U$ then there exists
an isomorphism $\phi\colon \tX \rightarrow \tX'$ restricting to the
identity on $\tU$ such that the following diagram commutes:
\[\begin{tikzcd}
\tX \arrow{rr}{\phi} \arrow{dr}{p} & & \tX' \arrow{dl}[swap]{p'} \\
& X &
\end{tikzcd}\]
Here are some examples:

\begin{example}
\label{example:ordinarynormalization}
Let $X$ be an irreducible algebraic variety and let $U \subset X$ be the smooth
locus of $X$.  The normalization of $X$ with respect to the identity map
$U \rightarrow U$ is often just called the normalization of $X$.  It
is an irreducible normal variety $\tX$ containing $U$ as a Zariski
open subset equipped with a finite map $p\colon \tX \rightarrow X$ restricting
to the identity on $U$.  We will give a concrete example of this
in the next example.  

If $X$ is not normal, then the singularities
of $\tX$ are simpler than those of $X$.  Indeed, if $X$ is an irreducible algebraic
curve, then $\tX$ is a smooth irreducible algebraic curve and
$p\colon \tX \rightarrow X$ is a resolution of the singularities of $X$.
For higher dimensional varieties $X$, taking the normalization is the first
step in resolving the singularities of $X$.
\end{example}

\begin{example}
Here is a concrete example of the normalization of a variety.
Let $X = \Set{$(x,y) \in \C^2$}{$y^2=x^2(x+1)$}$.  As we noted
in Example \ref{example:notnormal}, the algebraic curve $X$
is not normal.  The normalization of $X$ is the smooth curve
$\C$, which is equipped with the finite map $p\colon \C \rightarrow X$
defined by
\[p(z) = (z^2-1,z(z^2-1)) \in X \quad \text{for all $z \in \C$}.\qedhere\]
\end{example}
 
\begin{example}
Let $X = \C$ and let $U = X \setminus \{0,1\}$.  Set
$\tU = \Set{$(x,y) \in U \times \C$}{$y^2 = x(x-1)$}$, and
let $p\colon \tU \rightarrow U$ be the projection onto the
first coordinate.  The map $p\colon \tU \rightarrow U$
is a $2$-fold covering space map, and $\tU$ is smooth and hence
normal.  The normalization of $X$ with respect to
$p\colon \tU \rightarrow U$ is the smooth variety
$\tX = \Set{$(x,y) \in X \times \C$}{$y^2 = x(x-1)$}$, equipped
with the projection $p\colon \tX \rightarrow X$ onto
the first coordinate.  The map $p$ is not an unbranched 
covering space map since it is ramified over $0$ and $1$.
\end{example}

\begin{remark}
\label{remark:analyticnormalization}
The existence of normalizations for analytic varieties is more subtle.  Ordinary
normalizations as in Example \ref{example:ordinarynormalization} do exist.  However,
while this is elementary for algebraic varieties, for analytic varieties this
is a nontrivial theorem of Oka (see \cite[Theorem II.7.12]{Demailly}).  

To define more general normalizations, the Zariski open set $U$ should be replaced
by an open set whose complement is a closed analytic subvariety.
More general normalizations need not exist.  However, it is true that if they exist
they are unique (see \cite[Lemma II.7.11]{Demailly}\footnote{This reference only
proves uniqueness for the ordinary normalization, but its proof is elementary and
applies verbatim for more general normalizations.}).  We will use
this uniqueness in our proof of Theorem \ref{theorem:mumfordriemann} below.
\end{remark}

\subsection{Algebraization}

Let $M$ be an analytic variety.  Our next goal is to understand
when there exists an algebraic variety $X$ with $X^{\an} \cong M$.
This is a subtle question.  Serre's GAGA theorems \cite{SerreGAGA}
show that this always holds in the projective setting, and moreover
$X$ is unique up to algebraic isomorphism.  See Theorem \ref{theorem:gaga} below
for a more precise statement.  This is not true
in general:
\begin{itemize}
\item For $n \geq 2$, a typical complex torus $\C^n/\Lambda$ is
not the analytification of an algebraic variety (see, e.g., \cite[\S I.3]{MumfordAbelianVar}).
See Example \ref{example:nonalgebraic} below for a different kind of example
of an analytic variety that is not the analytification of an algebraic variety.
\item There exist affine analytic varieties $M$ such that
there exist two non-isomorphic affine algebraic varieties
$X$ and $Y$ with $X^{\an} \cong Y^{\an} \cong M$.  In
fact, if $N$ is the analytification of a smooth affine
algebraic curve of genus at least $1$, then there
are uncountably many pairwise non-isomorphic affine algebraic varieties
whose analytifications are isomorphic to $N \times \C$.
See \cite{KimMO}.
\end{itemize}

\subsection{Algebraizing finite ramified covers, I}

What we are really interested in is a relative
version of the above question.
Let $X$ be an algebraic variety, let $M$ be an analytic variety,
and let $f\colon M \rightarrow X^{\an}$ be a finite map.  
Our goal is to describe conditions under which we can 
find an algebraic variety $Y$ equipped with a finite map $g\colon Y \rightarrow X$
such that $Y^{\an}$ is isomorphic to $M$ via an isomorphism fitting
into a commutative diagram
\[\begin{tikzcd}
Y^{\an} \arrow{rr}{\cong} \arrow{dr}{g^{\an}} & & M \arrow{dl}[swap]{f} \\
& X^{\an} &
\end{tikzcd}\]
In this case, we will say that $g\colon Y \rightarrow X$ is an {\em algebraization} of $f\colon M \rightarrow X^{\an}$.
We will say that $f\colon M \rightarrow X^{\an}$ has a {\em unique algebraization} if it has
an algebraization $g\colon Y \rightarrow X$ and if $g'\colon Y' \rightarrow X$ is another
algebraization of $f$ then there exists an isomorphism from $Y$ to $Y'$ fitting into
a commutative diagram
\[\begin{tikzcd}
Y \arrow{rr}{\cong} \arrow{dr}{g} & & Y' \arrow{dl}[swap]{g'} \\
& X &
\end{tikzcd}\]
The easiest case is when $X$ is proper, in which case the existence
of unique algebraizations is a consequence of the proper version
of Serre's GAGA theorems:

\begin{theorem}[{\cite[Expos\'{e} XII, Corollaire 4.6]{SGA1}}]
\label{theorem:gaga}
Let $X$ be a proper algebraic variety, let $M$ be an analytic
variety, and let $f\colon M \rightarrow X^{\an}$ be a finite
map.  Then there exists a unique algebraization $g\colon Y \rightarrow X$
of $f\colon M \rightarrow X^{\an}$.  Moreover, $Y$ is proper, and
if $X$ is projective then $Y$ is projective.
\end{theorem}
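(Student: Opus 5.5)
The plan is to reduce the statement to Serre's GAGA comparison between coherent sheaves, via the dictionary between finite maps and coherent sheaves of algebras. On the analytic side, a finite map $f\colon M \rightarrow X^{\an}$ is determined up to isomorphism over $X^{\an}$ by the sheaf $\mathcal{A} = f_{\ast}\mathcal{O}_M$. This is a sheaf of $\mathcal{O}_{X^{\an}}$-algebras, and it is coherent by Grauert's direct image theorem, which is elementary for finite maps. Conversely, every coherent $\mathcal{O}_{X^{\an}}$-algebra $\mathcal{A}$ is of this form. One constructs its analytic spectrum $\operatorname{Specan}(\mathcal{A}) \rightarrow X^{\an}$ locally: over a small open set, present $\mathcal{A}$ by finitely many generators and relations, embed the spectrum in $U \times \C^N$ as the common zero set of the relations, and glue. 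This gives an anti-equivalence between finite analytic spaces over $X^{\an}$ and coherent $\mathcal{O}_{X^{\an}}$-algebras. The algebraic analogue is the standard anti-equivalence $Y \mapsto g_{\ast}\mathcal{O}_Y$, with inverse the relative $\operatorname{Spec}$, between finite $X$-varieties and coherent $\mathcal{O}_X$-algebras. Both dictionaries are compatible with analytification: $(g_{\ast}\mathcal{O}_Y)^{\an} = g^{\an}_{\ast}\mathcal{O}_{Y^{\an}}$.

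Second, I would invoke GAGA for a proper $X$. The functor $\mathcal{F} \mapsto \mathcal{F}^{\an}$ from coherent $\mathcal{O}_X$-modules to coherent $\mathcal{O}_{X^{\an}}$-modules is an equivalence of categories. It is compatible with tensor products, since analytification is $\mathcal{F}^{\an} = \mathcal{F} \otimes_{\mathcal{O}_X} \mathcal{O}_{X^{\an}}$ and this is exact by flatness. An algebra structure is a pair of maps $\mathcal{A} \otimes \mathcal{A} \rightarrow \mathcal{A}$ and $\mathcal{O} \rightarrow \mathcal{A}$ satisfying commutative diagrams. So full faithfulness plus tensor compatibility upgrades the equivalence to one between coherent algebras. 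Given $f$, take the unique coherent algebraic algebra $\mathcal{B}$ with $\mathcal{B}^{\an} \cong f_{\ast}\mathcal{O}_M$, and set $Y = \operatorname{Spec}_X \mathcal{B}$ with $g$ the structure map. Then $Y^{\an} = \operatorname{Specan}(\mathcal{B}^{\an}) \cong M$ over $X^{\an}$. For uniqueness, a second algebraization $g'\colon Y' \rightarrow X$ gives $g'_{\ast}\mathcal{O}_{Y'}$ with analytification isomorphic to $f_{\ast}\mathcal{O}_M$. Full faithfulness then produces an isomorphism of algebraic algebras, and hence an isomorphism $Y \cong Y'$ over $X$.

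For the final claims: $Y$ is proper because $Y^{\an} \rightarrow X^{\an}$ is proper with compact target, so $Y^{\an}$ is compact. If $X$ is projective with ample line bundle $L$, then $g^{\ast}L$ is ample on $Y$ because $g$ is finite, for instance by Serre's cohomological criterion using $g_{\ast}$ and the projection formula. Hence $Y$ is projective.

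The main obstacle is GAGA itself in the proper, non-projective setting. For projective $X$ I would follow Serre: reduce to $\bbP^n$, compare cohomology of $\mathcal{O}(k)$ by induction on $n$ using hyperplane sections, and use that twisted sheaves are globally generated. For proper $X$, I would follow Grothendieck: apply Chow's lemma to get a projective $X' \rightarrow X$ that is birational, then argue by noetherian induction on the support of the sheaf. The projective case is used on $X'$, and the discrepancy is controlled by sheaves supported on a proper closed subset. The remaining steps (the Spec dictionaries and their compatibility with analytification) are routine by comparison.
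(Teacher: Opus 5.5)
Your proposal is correct and follows essentially the same route as the source the paper cites; the paper itself gives no argument. SGA1, Expos\'{e} XII deduces Corollaire 4.6 from the GAGA equivalence for coherent sheaves on a proper $X$ (Th\'{e}or\`{e}me 4.4, proved via Chow's lemma and noetherian induction) exactly as you do, through the anti-equivalences between finite spaces over $X$ or $X^{\an}$ and coherent algebras, using compatibility of analytification with tensor products, and your arguments for properness of $Y$ and for projectivity via ampleness of $g^{\ast}L$ correctly supply the final clause, which the paper simply asserts.
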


\subsection{Algebraizing finite ramified covers, II}

If $X$ is not proper, things are more subtle.  If $M$ and $N$
are analytic varieties, an analytic map $f\colon M \rightarrow N$ is
said to be a {\em finite unbranched cover} if it satisfies the following:
\begin{itemize}
\item All points of $N$ have open neighborhoods $U \subset N$ such that
letting $\tU = f^{-1}(U)$, for some finite indexing set $I \neq \emptyset$ we have an
isomorphism $U \times I \cong \tU$ of analytic spaces fitting into a commutative 
diagram
\[\begin{tikzcd}
U \times I \arrow{rr}{\cong} \arrow{dr}{\pi} & & \tU \arrow{dl}[swap]{f} \\
& U &
\end{tikzcd}\]
\end{itemize}
This implies that $f\colon M \rightarrow N$ is a finite-sheeted covering
space in the sense of algebraic topology, but is stronger than that
if $N$ is singular.\footnote{Indeed, there exist bijective
maps $f\colon M \rightarrow N$ of analytic varieties that are homeomorphisms
but not isomorphisms of analytic varieties.  These are $1$-sheeted covering spaces in the sense
of algebraic topology, but not in the sense we are talking about.}
For finite unbranched covers, we can appeal to the generalized Riemann existence
theorem, which was first proved by Grauert--Remmert \cite{GrauertRemmert}.
See \cite[Expos\'{e} XII, \S 5]{SGA1} for alternate proofs.

\begin{theorem}[{Generalized Riemann existence theorem \cite{GrauertRemmert}}]
\label{theorem:generalizedriemann}
Let $X$ be an algebraic variety, let $M$ be an analytic
variety, and let $f\colon M \rightarrow X^{\an}$ be a finite
unbranched cover.  Then there exists a unique algebraization $g\colon Y \rightarrow X$
of $f\colon M \rightarrow X^{\an}$.
\end{theorem}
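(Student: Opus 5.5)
The plan is to compactify, extend the analytic cover across the boundary as a finite ramified cover, and then apply Theorem \ref{theorem:gaga}. I would first treat the case where $X$ is smooth and irreducible, handle uniqueness, and only then reduce the general case to this one.

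\textbf{Extension in the smooth case.} Suppose $X$ is smooth and irreducible. By Nagata compactification followed by Hironaka's resolution of singularities, choose an open embedding $X \subset \overline{X}$ with the following properties:
\begin{itemize}
\item $\overline{X}$ is a smooth proper (even projective) variety;
\item $D = \overline{X} \setminus X$ is a divisor with simple normal crossings.
\end{itemize}
The key local claim is that $f\colon M \rightarrow X^{\an}$ extends over each point of $D$. Near $x \in D$ there is a neighborhood $B \cong \bbD^d$ with $B \cap X^{\an} = (\bbD^{\times})^r \times \bbD^{d-r}$. The fundamental group of this set is $\Z^r$, so each connected component of $f^{-1}(B \cap X^{\an})$ is a finite cover corresponding to a finite-index subgroup of $\Z^r$. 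Such a cover is a quotient of a Kummer cover $w_i^{m} = z_i$ by a finite abelian group. The Kummer cover visibly extends to a finite map $\bbD^d \rightarrow \bbD^d$. Taking its quotient gives a finite extension $\overline{M}_B \rightarrow B$ which is normal by Lemma \ref{lemma:normalquotient}.

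\textbf{Gluing and algebraization.} These local extensions are normalizations of $B$ with respect to $f^{-1}(B \cap X^{\an}) \rightarrow B \cap X^{\an}$. By the uniqueness of analytic normalizations (Remark \ref{remark:analyticnormalization}), they agree on overlaps. They therefore glue to a normal analytic variety $\overline{M} \supset M$ with a finite map $\overline{f}\colon \overline{M} \rightarrow \overline{X}^{\an}$ extending $f$. Theorem \ref{theorem:gaga} now produces an algebraization $\overline{g}\colon \overline{Y} \rightarrow \overline{X}$. Set $Y = \overline{g}^{-1}(X)$, which is Zariski open; then $g = \overline{g}|_Y$ is finite and $Y^{\an} \cong \overline{f}^{-1}(X^{\an}) = M$ over $X^{\an}$.

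\textbf{Uniqueness.} Let $g'\colon Y' \rightarrow X$ be another algebraization. Since $f$ is unbranched, $Y'$ is normal. Take the algebraic normalization $\overline{Y}'$ of $\overline{X}$ with respect to $Y' \rightarrow X$. Its analytification is an analytic normalization of $\overline{X}^{\an}$ with respect to $M$, so it is isomorphic to $\overline{M}$ over $\overline{X}^{\an}$ by uniqueness of analytic normalizations. The uniqueness part of Theorem \ref{theorem:gaga} then gives $\overline{Y}' \cong \overline{Y}$ over $\overline{X}$. Restricting over $X$ gives $Y' \cong Y$.

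\textbf{General $X$.} Work one connected component at a time. Uniqueness gives compatibility on overlaps, so the problem is Zariski local, and we may assume $X$ is irreducible. Let $\pi\colon \tX \rightarrow X$ be proper and surjective with $\tX$ smooth, for example a resolution. Algebraize the pullback cover $\tX^{\an} \times_{X^{\an}} M$ using the smooth case, and do the same on $\tX \times_X \tX$ and on the triple fiber product, after resolving these as well. By uniqueness, the resulting descent data are algebraic, and one then descends along $\pi$ using descent of finite étale covers along proper surjections. This last step is the part I expect to be the main obstacle. It is where singular $X$ genuinely differs from the smooth case, and I would follow the descent argument of \cite[Expos\'{e} XII, \S 5]{SGA1}. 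The local Kummer analysis is the main technical point in the smooth case, but it is elementary once the normal crossings compactification is available.
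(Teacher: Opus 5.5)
The paper does not prove this statement. It quotes it from \cite{GrauertRemmert} with a pointer to \cite[Expos\'{e} XII, \S 5]{SGA1}, and your outline is essentially the argument given there: Hironaka plus the Kummer description of covers near a normal crossings divisor (replacing the Grauert--Remmert extension theorem), then Theorem \ref{theorem:gaga}, then descent. Your treatment of smooth $X$ is correct, including the gluing via uniqueness of analytic normalizations and the uniqueness argument through Theorem \ref{theorem:gaga}. The smooth case is also all the paper ever uses. In Claim 1, and via Theorem \ref{theorem:mumfordriemann} in Claims 2 and 3 of the proof of Theorem \ref{theorem:modulicoveralg}, the base is either $\Moduli_{0,n}$ or $U^{\an}=\tU/\PMod_{g,n}$ (resp.\ $\tU/\Gamma$), a quotient by a free properly discontinuous action, hence smooth.

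The gap is in reducing singular $X$ to the smooth case. You appeal to ``uniqueness'' over singular bases: over $X$ to make the problem Zariski local, and over $\tX\times_X\tX$ and the triple product to make the descent data algebraic. But you proved uniqueness only for smooth bases, because your argument needs a normal crossings compactification of the base. Moreover, gluing and descent need more than the existence of some isomorphism; they need the \emph{given} analytic isomorphism to be algebraic. Resolving $\tX\times_X\tX$ does not supply this. The descent datum is an analytic isomorphism between $p_1^{\ast}\widetilde{Y}$ and $p_2^{\ast}\widetilde{Y}$, which are already algebraic covers of the singular variety $\tX\times_X\tX$. Knowing that its pullback to a resolution is algebraic just reproduces the problem of descending a morphism.

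The missing idea is full faithfulness over an arbitrary base $S$. Let $Y_1,Y_2\to S$ be finite with unbranched analytifications, and let $\phi\colon Y_1^{\an}\to Y_2^{\an}$ be analytic over $S^{\an}$. The graph of $\phi$ is the image of a section of the covering map $(Y_1\times_S Y_2)^{\an}\to Y_1^{\an}$, so it is open and closed, hence a union of connected components. By (i) of \S\ref{section:algebravsanalysis}, these components are analytifications of Zariski connected components, so $\phi$ is algebraic. This gives uniqueness for every $X$ at once, legitimizes Zariski gluing, and makes your descent data algebraic without resolving any fiber products.

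There are two further omissions. First, after descending $\widetilde{Y}$ to $g\colon Y\to X$ via \cite[Expos\'{e} IX]{SGA1}, you must still show $Y^{\an}\cong M$. This needs the analytic counterpart of descent, which holds because $\pi^{\an}$ and its base changes are proper surjections, hence quotient maps. Second, ``Zariski local, so assume $X$ irreducible'' is false for connected reducible $X$. It is harmless if you take $\tX$ to be the disjoint union of resolutions of the components.

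Finally, for normal $X$ you can avoid descent altogether: run the proof of Theorem \ref{theorem:mumfordriemann}, one component of $M$ at a time, with $U$ the smooth locus of $X$. Descent is genuinely needed only for non-normal $X$, along the finite normalization map.
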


However, for general finite maps there might not exist an algebraization:

\begin{example}
\label{example:nonalgebraic}
Let $M = \Set{$(x,y) \in \C^2$}{$y^2 = \sin(x)$}$ and let $f\colon M \rightarrow \C$
be the finite analytic map $f(x,y) = x$.  The map $f\colon M \rightarrow \C$ has
no algebraization since it is branched over infinitely many points.  In fact,
$M$ is homeomorphic to an infinite-genus topological surface, so it
is not the analytification of any algebraic variety.
\end{example}

A version of the generalized Riemann existence theorem that applies in
settings beyond finite unbranched covers is stated on the first page of Mumford's
paper \cite{MumfordAbelian}, where it is attributed to Artin and Serre.  
Since he omits the proof, we include one for completeness:\footnote{The
statement of the theorem in \cite{MumfordAbelian} requires
$X$ to be normal, but our proof does not need this.}

\begin{theorem}[{Artin \& Serre, cf. \cite{MumfordAbelian}}]
\label{theorem:mumfordriemann}
Let $X$ be an irreducible algebraic variety, let $M$ be an irreducible analytic variety, and let
$f\colon M \rightarrow X^{\an}$ be a finite map.  Assume that $M$ is normal
and that there exists a Zariski open set $U \subset X$ such that $V = f^{-1}(U^{\an})$
is connected and dense in $M$ and $f\colon V \rightarrow U^{\an}$ is a finite unbranched cover.
Then there exists a unique algebraization $g\colon Y \rightarrow X$
of $f\colon M \rightarrow X^{\an}$.
\end{theorem}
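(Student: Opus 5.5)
Existence comes from gluing the unbranched cover over $U$ into the normalization of $X$; uniqueness comes from the uniqueness of normalizations.

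\emph{Step 1.} Apply Theorem \ref{theorem:generalizedriemann} to the finite unbranched cover $f|_V\colon V \rightarrow U^{\an}$. This gives an algebraic variety $\tU$ with a finite map $p\colon \tU \rightarrow U$ and an isomorphism $\tU^{\an} \cong V$ over $U^{\an}$. Several properties of $\tU$ follow from properties of $V$ via the algebraic-versus-analytic dictionary of \S\ref{section:algebravsanalysis}.
\begin{itemize}
\item $V$ is open in the normal variety $M$, so $V$ is normal and hence $\tU$ is normal.
\item $V$ is connected and normal, so it is irreducible; hence $\tU$ is irreducible.
\item $f|_V$ is a covering map, so $p$ is surjective. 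We may assume $U$ is nonempty, since otherwise $V$ could not be dense in $M$.
\end{itemize}

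\emph{Step 2.} Let $g\colon Y \rightarrow X$ be the normalization of $X$ with respect to $p\colon \tU \rightarrow U$, as in \S\ref{section:normalization}. Thus $Y$ is irreducible and normal, $g$ is finite, and $\tU$ is Zariski open in $Y$. Then $Y^{\an}$ is a normal analytic variety, $g^{\an}$ is finite, and $Y^{\an}$ contains $\tU^{\an} \cong V$ as the open set $(g^{\an})^{-1}(U^{\an})$. This open set is dense, because $\tU$ is a nonempty Zariski open subset of the irreducible variety $Y$.

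For this to count as a normalization in the analytic sense of Remark \ref{remark:analyticnormalization}, its complement must be a closed analytic subvariety. Here that complement is $(g^{\an})^{-1}((X \setminus U)^{\an})$, which is analytic. The same holds for $M$: its complement of $V$ is $f^{-1}((X\setminus U)^{\an})$, and $V$ is dense in $M$ by hypothesis. So both $f\colon M \rightarrow X^{\an}$ and $g^{\an}\colon Y^{\an} \rightarrow X^{\an}$ are normal, finite extensions of the same finite map $V \rightarrow U^{\an}$, with $V$ open and dense with analytic complement.

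We also need $g^{-1}(U)=\tU$ exactly, and not merely $\tU \subset g^{-1}(U)$. A point of $g^{-1}(U) \setminus \tU$ would lie in the proper closed subset $Y\setminus\tU$. Both $\tU$ and $g^{-1}(U)$ are finite over $U$, and $\tU \rightarrow U$ is finite, hence proper, so $\tU$ is closed in $g^{-1}(U)$. It is also open and dense in $g^{-1}(U)$, which is irreducible as an open subset of $Y$. Therefore the two coincide.

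\emph{Step 3.} By the uniqueness of analytic normalizations in Remark \ref{remark:analyticnormalization}, there is an analytic isomorphism $Y^{\an} \cong M$ over $X^{\an}$ extending the identification $\tU^{\an} \cong V$. This shows that $g$ is an algebraization of $f$.

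For uniqueness, let $g'\colon Y' \rightarrow X$ be another algebraization. Then $Y'^{\an} \cong M$, so $Y'$ is normal and irreducible. Moreover, $g'^{-1}(U)$ is an algebraization of the unbranched cover $V \rightarrow U^{\an}$. By the uniqueness in Theorem \ref{theorem:generalizedriemann}, $g'^{-1}(U) \cong \tU$ over $U$. Hence $Y'$ is a normalization of $X$ with respect to $p$, and the uniqueness of algebraic normalizations gives $Y' \cong Y$ over $X$.

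\emph{Main obstacle.} The delicate point is Step 3: making sure the analytic uniqueness statement applies. Its hypotheses must be checked carefully: the open set has analytic complement, is dense, both spaces are normal, and both maps are finite. I would first try to reduce to the elementary Riemann-extension argument. The composite $V \cong \tU^{\an} \subset Y^{\an}$ is locally bounded near the boundary, because $f$ and $g^{\an}$ are finite. Normality of $M$ and of $Y^{\an}$ then lets the identity map on $V$, and its inverse, extend continuously in both directions, giving mutually inverse maps over $X^{\an}$.
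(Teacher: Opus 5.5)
Your proposal is correct and follows the paper's proof essentially step for step. You algebraize $V \to U^{\an}$ via the generalized Riemann existence theorem, normalize $X$ with respect to the resulting $\tU \to U$, and get existence from the uniqueness of analytic normalizations; for uniqueness you combine the uniqueness in Riemann existence with the uniqueness of algebraic normalizations. Your extra checks (surjectivity of $p$, the analytic complements and density, and $g^{-1}(U) = \tU$) fill in details the paper leaves implicit, and your closing sketch of why analytic normalizations are unique is not needed since, like the paper, you simply cite that fact.
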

\begin{proof}
By the ordinary generalized Riemann existence theorem (Theorem \ref{theorem:generalizedriemann}), 
there is a unique algebraization $g\colon \tU \rightarrow U$ of $f\colon V \rightarrow U^{\an}$.
Since $M$ is normal, $V$ is also normal.  Since $V$ is connected and normal, so is $\tU$.
In particular, $\tU$ is irreducible.
Let $g\colon Y \rightarrow X$ be the normalization of $X$ with respect to 
$g\colon \tU \rightarrow U$.  

Since $M$ is normal, both $g^{\an}\colon Y^{\an} \rightarrow X^{\an}$ and $f\colon M \rightarrow X^{\an}$
are normalizations of $X^{\an}$ with respect to $f\colon V \rightarrow U^{\an}$.
By the uniqueness of normalizations of analytic varieties (see Remark \ref{remark:analyticnormalization}), 
there exists an isomorphism from $Y^{\an}$ to $M$ fitting into a commutative diagram
\[\begin{tikzcd}
Y^{\an} \arrow{rr}{\cong} \arrow{dr}{g^{\an}} & & M \arrow{dl}[swap]{f} \\
& X^{\an} &
\end{tikzcd}\]
In other words, $g\colon Y \rightarrow X$ is an algebraization
of $f\colon M \rightarrow X^{\an}$.  

To see that $g$ is unique, let $g'\colon Y' \rightarrow X$ be another algebraization
of $f\colon M \rightarrow X^{\an}$.  Set $\tU' = (g')^{-1}(U)$, so
$g'\colon \tU' \rightarrow U$ is an algebraization of the finite unbranched
cover $f\colon V \rightarrow U^{\an}$.  The uniqueness part of
the ordinary generalized Riemann existence theorem (Theorem \ref{theorem:generalizedriemann})
implies that $g'\colon \tU' \rightarrow U$ is isomorphic to $g\colon \tU \rightarrow U$.
Identifying $\tU' \subset Y'$ with $\tU \subset Y$ via such an isomorphism, we see
that $g\colon Y \rightarrow X$ and $g'\colon Y' \rightarrow X$ are both
normalizations of $X$ with respect to the finite map $\tU \rightarrow U$.
By the uniqueness of normalizations, we conclude that $g\colon Y \rightarrow X$
is isomorphic to $g'\colon Y' \rightarrow X$.
\end{proof}

\section{The moduli space of curves}
\label{section:modulicurves}

We now turn to the moduli space of curves.
Good general references for the algebro-geometric material in this
section are \cite{ACGH, HarrisMorrison}.  See \cite{FarbMargalitPrimer, HubbardTeich, IvanovSurvey}
for more information on Teichm\"{u}ller space and the mapping class group.

\subsection{Technical framework}

There are many different senses in which the moduli space of curves is a ``space''.
The most sophisticated point of view is that it is an algebraic stack; however, that
language is unnecessary for our purposes.  We prefer to simply treat
it as a complex algebraic variety.  Therefore,
throughout this paper we work with coarse moduli spaces and their associated
analytic spaces, rather than with moduli stacks.

\subsection{Setup}

Fix some $g,n \geq 0$ such that $\Sigma_{g,n}$ is hyperbolic, i.e., such
that $2g-2+n > 0$.  Recall that under our conventions the mapping class group $\Mod_{g,n}$
can permute the punctures.  Let $\PMod_{g,n}$ be the subgroup of $\Mod_{g,n}$ consisting
of mapping classes that fix the punctures pointwise.  This is often called the pure mapping class group.

\begin{remark}
We stated Theorem \ref{maintheorem:boggi} using $\Mod_{g,n}$ since that gives
a more general theorem.  However, most of the literature on the moduli space of curves 
uses $\PMod_{g,n}$.  Because of this, the first step of the proof of Theorem \ref{maintheorem:boggi} 
reduces it to $\PMod_{g,n}$.
\end{remark}

\subsection{Teichm\"{u}ller space}

Let $\Hyp(\Sigma_{g,n})$ be the set of finite-volume complete hyperbolic metrics on $\Sigma_{g,n}$.
The group $\Diff^{+}(\Sigma_{g,n})$ acts on $\Hyp(\Sigma_{g,n})$.  Letting $\Diff_0(\Sigma_{g,n})$ be
the group of diffeomorphisms of $\Sigma_{g,n}$ that are isotopic to the identity,
Teichm\"{u}ller space is the space
\[\Teich_{g,n} = \Hyp(\Sigma_{g,n}) / \Diff_0(\Sigma_{g,n})\]
of isotopy classes of finite-volume complete hyperbolic
metrics on $\Sigma_{g,n}$.  The space $\Teich_{g,n}$ is a complex manifold
of complex dimension $3g+n-3$.  It is contractible; indeed, it is homeomorphic to
$\R^{6g+2n-6}$.  The mapping class group $\PMod_{g,n}$ acts properly discontinuously 
on $\Teich_{g,n}$ by complex analytic automorphisms.  The quotient
$\Teich_{g,n} / \PMod_{g,n}$ is therefore a complex analytic space.

\subsection{The moduli space of curves}

Let $\Moduli_{g,n}$ be the moduli space of smooth connected projective genus-$g$ algebraic curves with
$n$ distinct ordered marked points.  This is an algebraic variety with
\[\Moduli_{g,n}^{\an} \cong \Teich_{g,n}/\PMod_{g,n}.\]
To simplify our notation, we will not distinguish
between $\Moduli_{g,n}$ and $\Moduli_{g,n}^{\an}$ unless it is necessary
for clarity.  The points of $\Moduli_{g,n}$ are isomorphism classes of
tuples $\cS = (S;p_1,\ldots,p_n)$, where $S$ is a smooth projective genus-$g$ algebraic curve
and $p_1,\ldots,p_n \in S$ are distinct points.  We will often slightly abuse notation
and call $\cS$ a point of $\Moduli_{g,n}$.  If $n=0$, then we will just call
the curve $S$ a point of $\Moduli_{g,n}$.

The group $\PMod_{g,n}$ need not act freely on $\Teich_{g,n}$, so $\Moduli_{g,n}$ is not
necessarily smooth.\footnote{It is not the case that points of $\Teich_{g,n}$ with nontrivial stabilizers
always project to singular points of $\Moduli_{g,n}$.  We will study the singular points of
$\Moduli_{g,n}$ in \S \ref{section:curvesauto}.}  However, we will later see that it is normal (see Lemma \ref{lemma:modulinormal}).
The algebraic variety $\Moduli_{g,n}$ is also quasiprojective, and we will
describe a projective compactification of it in \S \ref{section:delignemumford} below.

\subsection{Low-complexity examples}

In some low-complexity examples, we can describe the algebraic variety $\Moduli_{g,n}$ completely:

\begin{example}
\label{example:moduli0n}
Fix $n \geq 3$.  The points of $\Moduli_{0,n}$ are isomorphism classes of
$n$ distinct ordered points on $\bbP^1 = \C \cup \{\infty\}$.  The automorphism group of
$\bbP^1$ is $\PGL_2(\C)$.  This group acts $3$-transitively on $\bbP^1$,
and the stabilizer of any three distinct points is trivial.  Each isomorphism
class of $n$ ordered distinct points on $\bbP^1$ therefore has a unique
representative where the first three points are $0$ and $1$ and $\infty$.
From this, we see that
\[\Moduli_{0,n} \cong \Set{$(z_1,\ldots,z_{n-3}) \in (\C \setminus \{0,1\})^{n-3}$}{$z_i \neq z_j$ for all $1 \leq i < j \leq n-3$}.\]
This is the Zariski open subset of $\C^{n-3}$ whose complement is the vanishing
locus of the polynomial
\[f(z_1,\ldots,z_{n-3}) = \left(\prod_{1 \leq i \leq n-3} z_i(z_i-1)\right) \left(\prod_{1 \leq i < j \leq n-3} (z_i-z_j)\right).\]
In particular, $\Moduli_{0,n}$ is a smooth affine variety.\footnote{More generally, if $g \in \C[z_1,\ldots,z_d]$
is a polynomial, then the complement of the vanishing locus of $g$ in $\C^d$ is isomorphic to the affine subvariety
of $\C^d \times \C$ defined by the vanishing of the polynomial $h(z_1,\ldots,z_d,w) = w g(z_1,\ldots,z_d)-1$.}
\end{example}

\begin{example}
\label{example:moduli2}
Every smooth connected projective genus-$2$ algebraic curve $S$ is hyperelliptic,\footnote{We will
describe general hyperelliptic curves later in \S \ref{section:hyperelliptic}.} that is, it
has a degree-$2$ map $f\colon S \rightarrow \bbP^1$.  This map $f$ is unique up to postcomposition
by automorphisms of $\bbP^1$.  By the Riemann--Hurwitz formula, $f$ has $6$ branch points.  Up
to isomorphism, $S$ is determined by the images of those branch points in $\bbP^1$.  

It follows that $\Moduli_2$ is isomorphic to the moduli space of $6$ unordered distinct points
in $\bbP^1$.  Letting $S_6$ denote the symmetric group on $6$ points, the group $S_6$ acts
on $\Moduli_{0,6}$ by permuting the points and the above shows that $\Moduli_2 \cong \Moduli_{0,6}/S_6$.
The quotient of an affine variety by a finite group is affine, so we see in particular
that $\Moduli_2$ is an affine variety.
\end{example}

\begin{example}
\label{example:moduli11}
Points of $\Moduli_{1,1}$ are pairs $E = (S;p_1)$, where $S$ is a smooth connected projective genus-$1$
curve and $p_1 \in S$.  These are also called {\em elliptic curves}.  They have a canonical abelian group structure
with $p_1 \in S$ the identity.  From this, we get a canonical order-$2$ inversion
automorphism $i \in \Aut(E)$.  The map $i\colon S \rightarrow S$ fixes four points (including $p_1$), and the quotient
$S/i$ is $\bbP^1$.  

The isomorphism class of the elliptic curve $E$ is determined by the image
in $\bbP^1$ of these four fixed points.  One of these is the image of $p_1$,
and the other three are unordered.  It follows that $\Moduli_{1,1} \cong \Moduli_{0,4} /S_3$, where
$S_3$ acts by permuting the last three marked points.  In particular, $\Moduli_{1,1}$ is an
affine variety.
\end{example}

It is harder to describe a general $\Moduli_{g,n}$, which
in particular is typically far from an affine variety.  However, 
as we will describe next we can construct algebraic maps into $\Moduli_{g,n}$ by constructing
families of genus-$g$ curves with $n$ marked points.

\subsection{Classifying maps for families}
\label{section:moduliuniversal}

In an appropriate sense, the variety $\Moduli_{g,n}$ classifies families of curves with marked points.
One version of this is as follows.
Let $\pi\colon E \rightarrow B$ be a map of varieties and let $\sigma_1,\ldots,\sigma_n\colon B \rightarrow E$
be sections of $\pi$ such that the following hold:
\begin{itemize}
\item For some $d \geq 0$, the variety $B$ is a smooth $d$-dimensional variety and
the variety $E$ is a smooth $(d+1)$-dimensional variety.
\item The analytification of $\pi\colon E \rightarrow B$ is a proper holomorphic submersion.\footnote{If we allowed
$E$ and $B$ to be singular, then we would instead need to impose the more algebraic condition that
$\pi$ is flat and proper.}
\item For all $b \in B$, the preimage $\pi^{-1}(b)$ is a smooth connected projective genus-$g$ algebraic
curve.
\item The images of $\sigma_1,\ldots,\sigma_n$ are disjoint.
\end{itemize}
For all $b \in B$, we have a point
\[f(b) = (\pi^{-1}(b);\sigma_1(b),\ldots,\sigma_n(b)) \in \Moduli_{g,n}.\]
The resulting map $f\colon B \rightarrow \Moduli_{g,n}$ is then algebraic.

\begin{remark}
It is not necessarily true that every map to $\Moduli_{g,n}$ comes from the above construction.
This can be fixed by working in the category of {\em algebraic stacks}.
For geometric topologists who are interested in this, we recommend the beautiful
and elementary survey \cite{BehrendStacks}, which starts by describing
the moduli stack of Euclidean triangles.
\end{remark}

\section{Locus of curves with automorphisms}
\label{section:curvesauto}

Our next main goal is to study finite branched covers of the moduli
space of curves.  This requires a preliminary study of curves with
automorphisms.  Fix $g,n \geq 0$ such that $\Sigma_{g,n}$ is hyperbolic.

\subsection{Goal of section}

Consider a point $\cS = (S;p_1,\ldots,p_n)$ of $\Moduli_{g,n}$.  Its
automorphism group $\Aut(\cS)$ is the group of algebraic isomorphisms
$f\colon S \rightarrow S$ such that $f(p_i) = p_i$ for all $1 \leq i \leq n$.
As we will describe below, in most cases there are points
$\cS \in \Moduli_{g,n}$ such that $\Aut(\cS) = 1$.  The two exceptions
are $(g,n) = (2,0)$ and $(g,n) = (1,1)$, where all $\cS \in \Moduli_{g,n}$
have $|\Aut(\cS)| \geq 2$.  
Define $\Moduli_{g,n}^{\sym}$ to be the subspace of points $\cS \in \Moduli_{g,n}$
such that:
\begin{itemize}
\item $|\Aut(\cS)| > 1$ if $(g,n) \notin \{(2,0),(1,1)\}$; and
\item $|\Aut(\cS)| > 2$ if $(g,n) \in \{(2,0),(1,1)\}$.
\end{itemize}
Our main goal in this section is to prove that this is a proper Zariski
closed subset of $\Moduli_{g,n}$.  In most cases, we will do this by
proving that $\Moduli_{g,n}^{\sym}$ is the singular locus of $\Moduli_{g,n}$.
However, in a few cases this is not true, so we will need to work a little
harder.

\begin{remark}
We have $\Moduli_{0,n}^{\sym} = \emptyset$; indeed, since we have assumed
that $\Sigma_{0,n}$ is hyperbolic we must have $n \geq 3$, and an automorphism
of $\bbP^1$ fixing $n \geq 3$ points must be the identity.  We will therefore
focus on the case $g \geq 1$.
\end{remark}

\subsection{Hyperelliptic curves}
\label{section:hyperelliptic}

We start by studying an important special class of curves with automorphisms.
Assume that $g \geq 1$.
A smooth connected projective genus-$g$ algebraic curve $S$ is {\em hyperelliptic} if
there exists a degree-$2$ algebraic map $f\colon S \rightarrow \bbP^1$.  Such an
$f$ is branched at $2g+2$ points.  For $g \geq 2$ it is unique up to postcomposition
with automorphisms of $\bbP^1$ if it exists.  For $g = 1$ it is not unique in this way, but it is unique up
to postcomposition with automorphisms of $\bbP^1$ if we specify
one of the branch points.  

The automorphisms of $\bbP^1 = \C \cup \{\infty\}$ act $3$-transitively
on $\bbP^1$, so after composing $f$ with an automorphism of $\bbP^1$ we can assume
that three of the branch values are $\{0,1,\infty\}$.  Let
$\lambda_1,\ldots,\lambda_{2g-1} \in \C \setminus \{0,1\}$ be the other branch values.  

The variety $\bbP^1$ can be covered by two affine coordinate charts.  Let $x$ be the coordinate
centered at $0 \in \bbP^1$ and let $u$ be the coordinate centered at $\infty$,
so for $x \neq 0$ we have $u = 1/x$.  Over these two coordinate charts,
$S$ is isomorphic to the affine varieties defined by the following equations:\footnote{For $g \geq 2$, be warned
that $S$ is {\em not} the closure of either of these affine varieties in $\bbP^2$.  In fact,
the closures of both of these affine varieties are singular.}
\begin{align*}
y^2 &= x(x-1)(x-\lambda_1) \cdots (x-\lambda_{2g-1}),\\
v^2 &= u(1-u)(1-\lambda_1 u) \cdots (1-\lambda_{2g-1} u).
\end{align*}
To see how these two affine varieties are glued together to form $S$, plug
$x = 1/u$ into the first equation and clear denominators to get
\[(y u^{g+1})^2 = u(1-u)(1-\lambda_1 u) \cdots (1-\lambda_{2g-1} u).\]
It follows that a point $(x,y)$ in the first affine variety with $x \neq 0$
is identified with the point $(u,v) = (1/x,y/x^{g+1})$ of the second affine variety.

In this model, the involution $i\colon S \rightarrow S$ that multiplies $y$ and $v$ by
$-1$ is called the {\em hyperelliptic involution} 
associated to $f$.  Identifying $S$ with $\Sigma_g$, this
hyperelliptic involution $i$ is smoothly conjugate to the following involution
$\iota\colon \Sigma_g \rightarrow \Sigma_g$:
\Figure{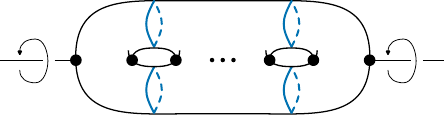}
The map $\iota$ exchanges the $g$ circles on the top and bottom while fixing the
indicated $2g+2$ marked points.  For $0 \leq n \leq 2g+2$, we can puncture
the surface at the first $n$ marked points (read left to right)
and identify $\iota$ with an element of $\PMod_{g,n}$.
Elements of $\PMod_{g,n}$ that are conjugate to $\iota$ are also called hyperelliptic
involutions.

\subsection{Hyperelliptic locus}

The {\em hyperelliptic locus}, denoted $\cH_{g,n}$, is
the subspace of $\Moduli_{g,n}$ consisting of points $\cS = (S;p_1,\ldots,p_n)$
such that the genus-$g$ curve $S$ admits a degree-$2$ algebraic map $f\colon S \rightarrow \bbP^1$
that is ramified at each $p_i$.  The resulting hyperelliptic involution
$\iota\colon S \rightarrow S$ fixes each $p_i$.  We emphasize that this
is stronger than merely requiring $S$ to be hyperelliptic.
The preimage of $\cH_{g,n}$ in $\Teich_{g,n}$ is the subspace of points that are fixed
by some hyperelliptic involution in $\PMod_{g,n}$.

The proof of the following lemma is a nice illustration of the interplay between
analytic and algebraic geometry.  It also illustrates how to use the
mapping property from \S \ref{section:moduliuniversal} to prove
nontrivial results without knowing the precise details of the structure
of the algebraic variety $\Moduli_{g,n}$:

\begin{lemma}
\label{lemma:hyperclosed}
Fix $g \geq 1$ and $n \geq 0$ such that $\Sigma_{g,n}$ is hyperbolic.
Then $\cH_{g,n}$ is a Zariski closed subspace of $\Moduli_{g,n}$
of dimension at most $2g-1$.
\end{lemma}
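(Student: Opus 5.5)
We will show that $\cH_{g,n}$ is the image of a proper algebraic map from a variety of dimension at most $2g-1$, and then apply Lemma \ref{lemma:properclosed}. Note first that $\cH_{g,n}$ is empty unless $n \leq 2g+2$, since a hyperelliptic involution fixes exactly $2g+2$ points. So assume $n \leq 2g+2$.

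\textbf{Parameter space and family.} Let
\[B \subset (\C \setminus \{0,1\})^{2g-1}\]
be the Zariski open set of tuples $(\lambda_1,\ldots,\lambda_{2g-1})$ with pairwise distinct entries. This is a smooth $(2g-1)$-dimensional variety, and in fact $B \cong \Moduli_{0,2g+2}$ by Example \ref{example:moduli0n}. Over $B$ we build $E \rightarrow B$ by gluing the two affine families
\[y^2 = x(x-1)(x-\lambda_1)\cdots(x-\lambda_{2g-1}) \quad\text{and}\quad v^2 = u(1-u)(1-\lambda_1 u)\cdots(1-\lambda_{2g-1}u)\]
along $u = 1/x$, $v = y/x^{g+1}$, exactly as in \S \ref{section:hyperelliptic}.

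\textbf{Checking the hypotheses of \S \ref{section:moduliuniversal}.} We must verify that $E$ is smooth of dimension $2g$ and that $E \rightarrow B$ is a proper holomorphic submersion with smooth connected genus-$g$ fibers.
\begin{itemize}
\item Smoothness and the submersion property follow from the Jacobian criterion. The right-hand polynomials have simple roots for every point of $B$, so at any point either $\partial/\partial y$ is nonzero or $\partial/\partial x$ is nonzero.
\item Properness holds because the map to $\bbP^1 \times B$ is finite and $\bbP^1 \times B \rightarrow B$ is proper.
\item The fibers are the hyperelliptic curves of \S \ref{section:hyperelliptic}.
\end{itemize}

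\textbf{Sections and the classifying map.} The $2g+2$ ramification points $x=0$, $x=1$, $x=\lambda_j$ (with $y=0$), and $u=0$ (with $v=0$) give disjoint algebraic sections of $E \rightarrow B$. For each injection $\tau\colon \{1,\ldots,n\} \rightarrow \{1,\ldots,2g+2\}$, choosing the corresponding sections gives, by \S \ref{section:moduliuniversal}, an algebraic map $f_\tau\colon B \rightarrow \Moduli_{g,n}$. Every point of $\cH_{g,n}$ lies in some image $f_\tau(B)$. Indeed, we can normalize the branch values by an automorphism of $\bbP^1$ so that three of them are $0,1,\infty$, and the marked points $p_i$ are ramification points. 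Conversely, each $f_\tau(B) \subset \cH_{g,n}$. Hence $\cH_{g,n} = \bigcup_\tau f_\tau(B)$ is a finite union, and each piece has dimension at most $\dim B = 2g-1$.

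\textbf{Main obstacle: closedness.} The maps $f_\tau$ are not proper, because $B$ is not compact, so Lemma \ref{lemma:properclosed} does not apply directly. I would argue closedness in the analytic topology instead, and then use the constructibility fact from that proof.

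By Chevalley, each $f_\tau(B)$ is constructible, so $\cH_{g,n}$ is constructible. It therefore suffices to show $\cH_{g,n}$ is closed in the analytic topology, which we do via Teichm\"uller space. Its preimage in $\Teich_{g,n}$ is the union of the fixed sets of the hyperelliptic involutions in $\PMod_{g,n}$. Each such fixed set is closed. The union is locally finite because $\PMod_{g,n}$ acts properly discontinuously, so only finitely many elements fix points near any given compact set. A locally finite union of closed sets is closed, and its image in the quotient is closed.

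Alternatively, one could check properness of $f_\tau$ onto its image directly: as $\lambda$ tends to the boundary of $B$, branch points collide and the curves leave every compact subset of $\Moduli_{g,n}$. The Teichm\"uller argument seems cleaner, and I expect this closedness step to be the main point requiring care.
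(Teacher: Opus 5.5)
Your proposal is correct and follows the paper's proof essentially step for step. The family over $B \cong \Moduli_{0,2g+2}$ gives constructibility (via Chevalley) and the dimension bound, and the locally finite union of fixed sets of hyperelliptic involutions in $\Teich_{g,n}$ gives analytic closedness, hence Zariski closedness. (The paper uses a single map instead of your union over injections $\tau$, since the ordering of the branch points is already built into $B$. Also, your aside that $f_\tau$ is not proper \emph{because} $B$ is noncompact is not a valid inference: the map is in fact proper, as your own alternative remark suggests. Nothing in your argument depends on this.)
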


\begin{remark}
The subspace $\cH_{g,n}$ is nonempty if and only if $0 \leq n \leq 2g+2$.  In that
case, with more work one can show that $\cH_{g,n} \cong \Moduli_{0,2g+2}/S_{2g+2-n}$, where
$S_{2g+2-n}$ fixes the first $n$ marked points.  This
generalizes Example \ref{example:moduli2}, and implies in particular that
$\cH_{g,n}$ has dimension exactly equal to $2g-1$.
\end{remark}

\begin{proof}[Proof of Lemma \ref{lemma:hyperclosed}]
The subspace $\cH_{g,n}$ is empty if $n > 2g+2$, so we can assume
without loss of generality that $0 \leq n \leq 2g+2$.  Below we will prove two things:
\begin{itemize}
\item[(a)] The subset $\cH_{g,n}$ of $\Moduli_{g,n}$ is closed in the
analytic topology.
\item[(b)] There is an algebraic map $f\colon \Moduli_{0,2g+2} \rightarrow \Moduli_{g,n}$ whose
image is $\cH_{g,n}$.
\end{itemize}
Before doing this, we explain why these facts imply the lemma.

Since $\cH_{g,n}$ is the image of an algebraic map by (b),
a theorem of Chevalley (see, e.g., \cite[\S I.8, Corollary 2]{MumfordRed})
says that it is constructible, that is, a finite union of sets
of the form $U \cap C$ with $U$ Zariski open and $C$ Zariski closed.  A standard
result in algebraic geometry (see, e.g., \cite[Expos\'e XII, Corollary 2.3]{SGA1} or \cite[Theorem 5.4.4]{ArtinBook}) says that
a constructible subspace of an algebraic variety that is closed in
the analytic topology is closed in the Zariski topology, so by (a)
we deduce that $\cH_{g,n}$ is Zariski closed.  Finally, since $\Moduli_{0,2g+2}$
has dimension $2g-1$ it follows from (b) that $\cH_{g,n}$ has dimension
at most $2g-1$.

We divide our proof of (a) and (b) into the following two claims:

\begin{claim}{1}
\label{claim:hyperclosed.1}
The hyperelliptic locus $\cH_{g,n}$ is a closed subspace of $\Moduli_{g,n}$ in its analytic topology.
\end{claim}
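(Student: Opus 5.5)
We will prove Claim \ref{claim:hyperclosed.1} by working upstairs in Teichm\"{u}ller space, where the hyperelliptic locus has a clean description in terms of fixed points. Let $\pi\colon \Teich_{g,n} \rightarrow \Moduli_{g,n}$ be the projection. As noted just before the lemma, $\pi^{-1}(\cH_{g,n})$ is the set of points of $\Teich_{g,n}$ fixed by some hyperelliptic involution in $\PMod_{g,n}$. Write $\mathcal{I} \subset \PMod_{g,n}$ for the set of hyperelliptic involutions, which is a union of conjugacy classes. Then
\[\pi^{-1}(\cH_{g,n}) = \bigcup_{\iota \in \mathcal{I}} \text{Fix}(\iota).\]
Here $\text{Fix}(\iota)$ denotes the fixed set of $\iota$ acting on $\Teich_{g,n}$. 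Each such fixed set is closed, since $\iota$ acts by a homeomorphism of the Hausdorff space $\Teich_{g,n}$.

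The key step is to show that this union of closed sets is locally finite, and hence closed. For this we use proper discontinuity of the $\PMod_{g,n}$-action on $\Teich_{g,n}$. Fix $x \in \Teich_{g,n}$ and a compact neighborhood $K$ of $x$. Only finitely many $\phi \in \PMod_{g,n}$ satisfy $\phi(K) \cap K \neq \emptyset$. If $\text{Fix}(\iota) \cap K \neq \emptyset$, then $\iota(K) \cap K \neq \emptyset$, so only finitely many $\iota \in \mathcal{I}$ have fixed sets meeting $K$. Near $x$ the union is therefore a finite union of closed sets, which is closed. It follows that $\pi^{-1}(\cH_{g,n})$ is closed.

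To pass to $\Moduli_{g,n}$, observe that $\pi^{-1}(\cH_{g,n})$ is saturated, i.e.\ $\PMod_{g,n}$-invariant, because $\mathcal{I}$ is closed under conjugation and $\phi(\text{Fix}(\iota)) = \text{Fix}(\phi\iota\phi^{-1})$. The quotient map $\pi$ is open, being the quotient by a group action. The complement of $\cH_{g,n}$ is $\pi$ of the open saturated complement of $\pi^{-1}(\cH_{g,n})$, so it is open.

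The one point needing care is the identification of $\pi^{-1}(\cH_{g,n})$ with the union of fixed sets of hyperelliptic involutions. Concretely, suppose $\cS$ has a hyperelliptic involution fixing every $p_i$, and lift $\cS$ to a marked point of $\Teich_{g,n}$. The involution then gives a mapping class in $\PMod_{g,n}$ fixing that point. This mapping class is topologically conjugate to the model $\iota$, since it is an involution with quotient a sphere, $2g+2$ fixed points, and the punctures among the fixed points. Conversely, suppose $\phi \in \mathcal{I}$ fixes a point of $\Teich_{g,n}$. By the standard fact that stabilizers of points of $\Teich_{g,n}$ are realized by conformal automorphisms (Nielsen realization for a point), $\phi$ is realized by an automorphism of the curve. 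That automorphism is an involution with quotient $\bbP^1$ fixing the $p_i$, so the curve lies in $\cH_{g,n}$. The paper already states this identification, so we will simply cite it. The main obstacle is really the local finiteness argument, which proper discontinuity handles.
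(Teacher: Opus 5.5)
Your proof is correct and follows essentially the same route as the paper: reduce to showing that the preimage $\bigcup_{\iota} \Teich_{g,n}^{\iota}$ in Teichm\"{u}ller space is closed, then use proper discontinuity to see that near any point only finitely many of the closed fixed sets can matter. The only differences are cosmetic. The paper chooses a neighborhood $B$ of $x$ whose only translates meeting $B$ come from the finite stabilizer $(\PMod_{g,n})_x$, whereas you use a compact neighborhood $K$ and the finiteness of $\{\phi : \phi(K) \cap K \neq \emptyset\}$. Also, your appeal to openness of the quotient map is unnecessary, since a subset of $\Teich_{g,n}/\PMod_{g,n}$ is closed in the quotient topology precisely when its preimage is closed.
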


It is enough to prove that the preimage of $\cH_{g,n}$ in the Teichm\"{u}ller space
$\Teich_{g,n}$ is closed.  This preimage is the set
\[C = \bigcup_{\substack{\iota \in \PMod_{g,n} \\ \text{hyperelliptic involution}}} \Teich_{g,n}^{\iota},\]
where $\Teich_{g,n}^{\iota}$ is the fixed locus of $\iota$ in $\Teich_{g,n}$.  To prove that $C$
is closed, consider a limit point $x \in \Teich_{g,n}$ of $C$.  We must prove that
$x \in C$.  Let $(\PMod_{g,n})_x$ be the stabilizer of $x$.
Since $\PMod_{g,n}$ acts properly discontinuously on $\Teich_{g,n}$, there exists
an open neighborhood $B$ of $x$ such that the only $f \in \PMod_{g,n}$ with
$f(B) \cap B \neq \emptyset$ are those $f$ that lie in $(\PMod_{g,n})_x$.  It follows that
\[C \cap B = \bigcup_{\substack{\iota \in (\PMod_{g,n})_x \\ \text{hyperelliptic involution}}} \Teich_{g,n}^{\iota} \cap B.\]
Since $(\PMod_{g,n})_x$ is a finite group, this is a finite union.  Since each $\Teich_{g,n}^{\iota}$
is closed, we deduce that $C \cap B$ is a closed subset of $B$, and hence that $x \in C$, as desired.

\begin{claim}{2}
\label{claim:hyperclosed.2}
There is an algebraic map $f\colon \Moduli_{0,2g+2} \rightarrow \Moduli_{g,n}$ whose
image is $\cH_{g,n}$.
\end{claim}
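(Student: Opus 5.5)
The plan is to construct an explicit family of hyperelliptic curves over $\Moduli_{0,2g+2}$ and then invoke the classifying-map property from \S\ref{section:moduliuniversal}. Using Example \ref{example:moduli0n}, identify $\Moduli_{0,2g+2}$ with the space of tuples $(\lambda_1,\ldots,\lambda_{2g-1})$ of distinct points of $\C \setminus \{0,1\}$. These correspond to the $2g+2$ ordered points $0,1,\infty,\lambda_1,\ldots,\lambda_{2g-1}$ of $\bbP^1$. Set $B = \Moduli_{0,2g+2}$.

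Define $E \subset B \times (\text{two charts})$ by gluing two affine varieties. The first is
\[\Set{$(\lambda,x,y)$}{$y^2 = x(x-1)(x-\lambda_1)\cdots(x-\lambda_{2g-1})$}.\]
The second is
\[\Set{$(\lambda,u,v)$}{$v^2 = u(1-u)(1-\lambda_1 u)\cdots(1-\lambda_{2g-1}u)$}.\]
The gluing is along $x \neq 0$ via $(u,v) = (1/x, y/x^{g+1})$, exactly as in \S\ref{section:hyperelliptic}.

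The key steps are as follows.
\begin{enumerate}
\item Check that $E$ is smooth of dimension $2g$. By the Jacobian criterion this holds because the right-hand polynomials have distinct roots for every $\lambda \in B$.
\item Check that $\pi\colon E \to B$ is a proper holomorphic submersion with fibers the smooth genus-$g$ hyperelliptic curves of \S\ref{section:hyperelliptic}.
   \begin{itemize}
   \item Submersivity follows from the same partial-derivative computation, now in the fiber variables.
   \item For properness, $E$ is finite over $B \times \bbP^1$ via $(x,y)\mapsto x$, since the equations are monic in $y$ and $v$. The projection $B\times\bbP^1 \to B$ is proper.
   \end{itemize}
\item Define sections from the ramification points. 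These are $\sigma_1 = (x,y)=(0,0)$, $\sigma_2 = (1,0)$, $\sigma_3 = (u,v)=(0,0)$ (the point over $\infty$), and $\sigma_{3+j} = (\lambda_j,0)$. They are algebraic and have disjoint images because the branch values are distinct.
\item Take the first $n \le 2g+2$ of these sections, so the classifying map $f\colon B\to\Moduli_{g,n}$ is algebraic. Every $f(b)$ lies in $\cH_{g,n}$, since the projection to $\bbP^1$ is a degree-$2$ map ramified at each chosen point.
\end{enumerate}

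For surjectivity onto $\cH_{g,n}$, take $(S;p_1,\ldots,p_n)\in\cH_{g,n}$ with degree-$2$ map $h\colon S\to\bbP^1$ ramified at each $p_i$. Order the $2g+2$ ramification points so that $p_1,\ldots,p_n$ come first. When $n\ge 1$ this also handles the $g=1$ non-uniqueness, since the branch point is specified. Then compose $h$ with an automorphism of $\bbP^1$ sending the first three branch values to $0,1,\infty$. The curve $S$ is then isomorphic to the fiber over the resulting $\lambda$. The isomorphism sends $p_i$ to $\sigma_i(\lambda)$, because a double cover of $\bbP^1$ is determined by its branch values; this is standard (take $y=\sqrt{\prod(x-\lambda)}$ as a function on $S$).

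If $n<3$, some of the points $0,1,\infty$ are just extra branch points and not marked, and the argument is the same.

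I expect the main obstacle to be verifying carefully that $\pi$ is a proper submersion with smooth total space near the gluing at $u=0$. The other delicate point is the classical fact that every hyperelliptic curve arises as one of these fibers with the marked points matched up.
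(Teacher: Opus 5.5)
Your proposal is correct and follows essentially the same route as the paper. You use the same two affine charts over $B \cong \Moduli_{0,2g+2}$ glued via $(u,v)=(1/x,y/x^{g+1})$, sections at the ramification points, properness via the finite degree-$2$ map $E \rightarrow B \times \bbP^1$, and the classifying map of \S\ref{section:moduliuniversal}; the differences are only that you mark $0,1,\infty$ first rather than last, and that you spell out why every point of $\cH_{g,n}$ is hit, which the paper leaves implicit.
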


Set
\[B = \Set{$(z_1,\ldots,z_{2g-1}) \in (\C \setminus \{0,1\})^{2g-1}$}{$z_i \neq z_j$ for all $1 \leq i < j \leq 2g-1$},\]
so $B \cong \Moduli_{0,2g+2}$ (see Example \ref{example:moduli0n}).  We will define a
family of smooth projective genus-$g$ curves $\pi\colon E \rightarrow B$ with sections
$\sigma_1,\ldots,\sigma_n\colon B \rightarrow E$
in the sense of \S \ref{section:moduliuniversal}.  For $b \in B$, the fiber $\pi^{-1}(b)$
will be the hyperelliptic curve whose branch image consists of the unordered set of $2g+2$ points
on $\bbP^1$ obtained by forgetting the ordering on the points corresponding to $b$.  The
sections mark fixed points of the hyperelliptic involution on $\pi^{-1}(b)$.
Via the construction in \S \ref{section:moduliuniversal},
we get an algebraic map $f\colon B \rightarrow \Moduli_{g,n}$ whose image is $\cH_{g,n}$.  
 
It remains to construct $\pi\colon E \rightarrow B$ and the sections
$\sigma_1,\ldots,\sigma_n\colon B \rightarrow E$.  Define
$U \subset B \times \C^2$ and $V \subset B \times \C^2$ to be
\begin{align*}
U &= \Set{$((z_1,\ldots,z_{2g-1}),(x,y)) \in B \times \C^2$}{$y^2 = x(x-1)(x-z_1)\cdots(x-z_{2g-1})$},\\
V &= \Set{$((z_1,\ldots,z_{2g-1}),(u,v)) \in B \times \C^2$}{$v^2 = u(1-u)(1-z_1 u) \cdots (1-z_{2g-1} u)$}.
\end{align*}
Both $U$ and $V$ are smooth affine varieties.  Let $E$ be the result of gluing $U$ to $V$ as follows:
\begin{itemize}
\item For $\bz = (z_1,\ldots,z_{2g-1}) \in B$ and $(\bz,(x,y)) \in U$ with $x \neq 0$, identify
$(\bz,(x,y)) \in U$ with $(\bz,(1/x,y/x^{g+1})) \in V$.
\end{itemize}
This is a smooth algebraic variety covered by the open affine subsets $U$ and $V$, and
the projections $U \rightarrow B$ and $V \rightarrow B$ onto the first coordinate
give an algebraic map $\pi\colon E \rightarrow B$.

By the discussion in \S \ref{section:hyperelliptic}, for $\bz \in B$ the fiber $\pi^{-1}(\bz)$ is indeed the
hyperelliptic curve corresponding to $\bz$.  For $1 \leq i \leq n$, define a section
$\sigma_i\colon B \rightarrow E$ as follows.  Consider $\bz = (z_1,\ldots,z_{2g-1}) \in B$.
\begin{itemize}
\item If $1 \leq i \leq 2g-1$, then $\sigma_i(\bz)$ is the image in $E$ of the point
$(\bz,(z_i,0)) \in U$.
\item If $i = 2g$, then $\sigma_i(\bz)$ is the image in $E$ of the point $(\bz,(0,0)) \in U$.
\item If $i = 2g+1$, then $\sigma_i(\bz)$ is the image in $E$ of the point $(\bz,(1,0)) \in U$.
\item If $i = 2g+2$, then $\sigma_i(\bz)$ is the image in $E$ of the point $(\bz,(0,0)) \in V$.
\end{itemize}
Since we have assumed that $0 \leq n \leq 2g+2$, these account for all
possible values of $i$.  We remark that we marked $\{0,1,\infty\}$ last rather than
first to avoid index shifts in our notation.

We must check that the analytification of $\pi$
is a proper submersion.  That it is a submersion is immediate from the equations defining $U$ and $V$,
so all that remains to check is that $\pi$ is proper.  For this, note that
the projections 
\begin{alignat*}{2}
U &\rightarrow B \times \C &\quad\quad V &\rightarrow B \times C \\
(\bz,(x,y)) & \mapsto (\bz,x) &\quad\quad  (\bz,(u,v)) &\mapsto (\bz,u)
\end{alignat*}
glue to give a finite degree-$2$ map $E \rightarrow B \times \bbP^1$, and $\pi$ factors as
$E \rightarrow B \times \bbP^1 \rightarrow B$.
Since both of these maps are proper, it follows that $\pi$ is proper.
\end{proof}

\subsection{Linear quotients of affine space}

Before continuing our study of curves with automorphisms, we pause
to prove the following standard lemma:

\begin{lemma}[{cf.\ Remark \ref{remark:smoothcriterion}}]
\label{lemma:notmanifold}
Let $W$ be a finite-dimensional complex vector space and let $F < \GL(W)$ be
a nontrivial finite group.  Let $W/F$ be the complex algebraic variety obtained
by quotienting $W$ by $F$ and let $\pi\colon W \rightarrow W/F$ be the projection.
For all $f \in F$ with $f \neq 1$, assume that
\[W^f = \Set{$w \in W$}{$f(w) = w$}\]
has codimension at least $2$.  Then $W/F$ is not smooth at $\pi(0)$.
\end{lemma}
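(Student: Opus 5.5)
Suppose, for contradiction, that $W/F$ is smooth at $y = \pi(0)$, of dimension $d = \dim W$. The plan is topological: pick a small $F$-invariant ball $B \subset W$ around $0$. Then $\pi(B) = B/F$ is a neighborhood of $y$. Because $W/F$ is smooth of dimension $d$ at $y$, we may shrink to a neighborhood $N$ of $y$ homeomorphic to an open ball in $\C^d$. We then compare local fundamental groups of $N \setminus \{y\}$ and of $N \setminus D$, where $D = \pi(\bigcup_{f \neq 1} W^f)$ is the branch locus.

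First I would observe that $D$ is the image under the finite map $\pi$ of a finite union of linear subspaces of complex codimension at least $2$. By \S\ref{section:finitemaps}, finite maps preserve dimension, so $D$ is a closed analytic subset of $W/F$ of complex codimension at least $2$, hence of real codimension at least $4$. Since $N$ is a topological manifold of real dimension $2d$ and $D \cap N$ is a closed subset with real codimension $\ge 4$ (a finite union of submanifold pieces), general position shows that $N \setminus D$ is connected and simply connected. The same holds with $N$ replaced by a ball neighborhood.

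Next, over $W/F \setminus D$ the map $\pi$ is a genuine covering with deck group $F$, because $F$ acts freely on $W \setminus \bigcup_{f \ne 1} W^f$. Its total space $\pi^{-1}(N) \setminus \bigcup W^f$ is connected. To see this, note that $\pi^{-1}(N)$ is a connected neighborhood of $0$ (choose $N$ as $B/F$, which is contractible by radial scaling). Removing a real-codimension-$\ge 4$ set leaves it connected. A connected covering of degree $|F| > 1$ of the simply connected space $N \setminus D$ is impossible. This is the desired contradiction.

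The main obstacle is arranging $N$ so that it is simultaneously a manifold ball around $y$ (from smoothness) and has a connected preimage. I would handle this by using the conical structure: $W/F$ is invariant under real scaling, so $B/F$ deformation-retracts onto $y$. Smoothness supplies a ball $N' \subset B/F$ around $y$, and $N' \setminus D$ is simply connected by the codimension count. I would then take the component structure of $\pi^{-1}(N')$. Each component maps onto $N'$, since $\pi$ is open and closed. The component containing $0$ is stabilized by all of $F$, because $F$ fixes $0$. So this component is all of $\pi^{-1}(N')$ and is connected, and the covering argument goes through. Smooth here means the analytic germ is a manifold, so the topological statement suffices.
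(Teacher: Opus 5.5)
Your argument is correct and uses the same mechanism as the paper's proof: the complement of the codimension-$\geq 2$ branch locus $\pi(Z)$ is simply connected, yet it is the base of a connected free $F$-covering of degree $|F|>1$. The only difference is the setting. The paper first uses the scaling maps $\overline{m}_r$ to spread smoothness from $\pi(0)$ to all of $W/F$ and then argues globally, where connectedness of $W - Z$ is automatic. You instead stay in a coordinate ball $N'$ around $\pi(0)$, and there the cleanest reason $\pi^{-1}(N')$ is connected is that each component surjects onto $N'$ and so contains $\pi^{-1}(\pi(0))=\{0\}$; $F$-invariance of the component containing $0$ is not by itself what gives this.
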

\begin{proof}
All topological statements we make in this proof refer to the analytic topology.
Let $n = \dim(W)$.  Our assumption on the fixed loci of $F$ implies that $n \geq 2$.
The variety $W/F$ is also $n$-dimensional.
Assume for the sake of contradiction that $W/F$ is smooth at $\pi(0)$.
This implies the following:

\begin{unnumberedclaim}
The variety $W/F$ is smooth and contractible.
\end{unnumberedclaim}
\begin{proof}[Proof of claim]
For $0 \leq r \leq 1$, the linear map $m_r\colon W \rightarrow W$ defined by $m_r(w) = r w$ descends
to a map $\om_r\colon W/F \rightarrow W/F$.  We have $\om_1 = \text{id}$,
and $\om_0$ is a constant map.  It follows that $W/F$ is contractible.  Next,
since $W/F$ is smooth at $\pi(0)$, there exists a Zariski open neighborhood $U$ 
of $\pi(0)$ consisting of smooth points.  For a point $x \in W/F$,
we can find some $0 < r \leq 1$ such that $\om_r(x) \in U$, so $\om_r(x)$ is a smooth
point.  Since
$0 < r \leq 1$, the map $\om_r$ is an isomorphism.  We conclude that $x$ is a smooth
point, as desired.
\end{proof}

Set
\[Z = \bigcup_{\substack{f \in F \\ f \neq 1}} W^f.\]
This is a subvariety of codimension at least $2$ in $W$.  Since
$\pi\colon W \rightarrow W/F$ is a finite map, it follows that $\oZ = \pi(Z)$ is also
a subvariety of codimension at least $2$ in $W/F$ (see \S \ref{section:finitemaps}).  General
position arguments show that if $X$ is a smooth complex variety and $Y \subset X$
is a subvariety of (complex) codimension at least $2$, then
the inclusion $X-Y \hookrightarrow X$ induces an isomorphism on $\pi_1$
for any choice of basepoint.  Letting $w_0 \in W-Z$ be a basepoint and
$\ow_0 = \pi(w_0)$, it follows that
\begin{align*}
\pi_1(W-Z,w_0) \cong &\pi_1(W,w_0) = 1, \\
\pi_1(W/F-\oZ,\ow_0) \cong & \pi_1(W/F,\ow_0) = 1.
\end{align*}
The group $F$ acts freely on $W-Z$ with $(W -Z)/F \cong W/F - \oZ$,
so by covering space theory we have $\pi_1(W/F - \oZ,\ow_0) \cong F \neq 1$, a contradiction.
\end{proof}

\begin{remark}
\label{remark:notmanifold}
In fact, the Chevalley--Shephard--Todd theorem (see, e.g., \cite[\S 3.5]{LehrerTaylor}) implies that $W/F$ is smooth if and only
if $F$ is generated by {\em pseudo-reflections}, i.e., linear maps whose fixed set
has codimension $1$.  See \cite{Prill} for a complete classification of complex
varieties of the form $W/F$.  
\end{remark}

\subsection{Locus of curves with automorphisms, high complexity}

Assume that $(g,n) \notin \{(2,0),(1,1)\}$, or equivalently $g+n \geq 3$.  Recall that
$\Moduli_{g,n}^{\sym}$ is the subspace of $\Moduli_{g,n}$ consisting of
points $\cS$ with $\Aut(\cS)$ nontrivial.  The following theorem
was first proved by Rauch \cite{RauchSingular} when $n = 0$.  We do not
know a reference proving the general case, but its proof follows
the outline of the proof in \cite{RauchSingular}.

\begin{theorem}[{Rauch \cite{RauchSingular}}]
\label{theorem:rauchsym}
Fix $g \geq 1$ and $n \geq 0$ such that $g+n \geq 3$.  The following hold:
\begin{itemize}
\item[(i)] If $g+n \geq 4$, then $\Moduli_{g,n}^{\sym}$ is the singular locus of $\Moduli_{g,n}$.
\item[(ii)] If $g+n = 3$, then $\Moduli_{g,n}^{\sym}$ is the union of $\cH_{g,n}$ and the singular locus
of $\Moduli_{g,n}$.
\end{itemize}
\end{theorem}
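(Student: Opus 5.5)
Fix $\cS = (S;p_1,\ldots,p_n) \in \Moduli_{g,n}$ and a lift $x \in \Teich_{g,n}$.  I would reduce everything to the local picture of Remark \ref{remark:smoothcriterion}.  Since $\PMod_{g,n}$ acts properly discontinuously, a neighborhood of $\cS$ in $\Moduli_{g,n}$ is analytically isomorphic to a neighborhood of $\pi(0)$ in $W/F$.  Here $W = T_x \Teich_{g,n}$ and $F$ is the image of the stabilizer $(\PMod_{g,n})_x \cong \Aut(\cS)$ in $\GL(W)$.  The action on $W$ is linearizable near the fixed point; for instance, one can use Bers coordinates or Cartan's linearization lemma for finite groups.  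So smoothness of $\Moduli_{g,n}$ at $\cS$ is smoothness of $W/F$ at $\pi(0)$.

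The first step is to check that $\Aut(\cS)$ acts faithfully on $W$, apart from the hyperelliptic exceptions.  I would identify $W$ with the dual of $Q = \HH^0(S, K_S^{2}(p_1+\cdots+p_n))$, the space of quadratic differentials with at most simple poles at the $p_i$.  A nontrivial automorphism $f$ acting trivially on $Q$ would fix the map to projective space given by this linear system.  When $g+n \geq 3$ and we are not in a hyperelliptic-with-Weierstrass-marked-points situation, I expect this system to be very ample, or at least birational onto its image, which forces $f = 1$.  Equivalently, a nontrivial $f$ acting trivially on Teichmüller space would be central in $\PMod_{g,n}$ and act trivially on all of $\Teich_{g,n}$.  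Such elements exist only when $(g,n) \in \{(2,0),(1,1),(1,2)\}$, where they are hyperelliptic involutions.  The case $(1,2)$ has $g+n = 3$, so it falls under (ii).  Given faithfulness, $\Aut(\cS) \neq 1$ implies $F \neq 1$.

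The second, main step is a dimension count on fixed loci.  For $1 \neq f \in \Aut(\cS)$, the fixed space $W^f$ is the tangent space to the locus of curves admitting a symmetry of the same topological type as $f$.  Passing to the quotient orbifold $S/\langle f \rangle$, this has dimension $3h - 3 + m$, where $h$ is the genus of $S/\langle f \rangle$ and $m$ counts the branch points together with the images of the marked points.  I would use Riemann--Hurwitz to show $3g-3+n - (3h-3+m) \geq 2$ whenever $g+n \geq 4$, and also when $g+n = 3$ unless $f$ is a hyperelliptic involution fixing the $p_i$.  The extremal case is an involution: with $r$ fixed points, $2g-2 = 2(2h-2)+r$, and the marked points are either fixed points or come in swapped pairs.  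A short case analysis ($h = 0$ versus $h \geq 1$, small $r$) should pin down that codimension $1$ occurs only for hyperelliptic involutions in low complexity.  Here I must be careful about the genus-$3$ hyperelliptic case and about $g = 1$ with $n = 3$.  Lemma \ref{lemma:notmanifold} then shows that every point with $F \neq 1$ is singular when $g+n \geq 4$.

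Conversely, if $\Aut(\cS) = 1$ then $F = 1$ and $\cS$ is a smooth point, which gives (i).  For (ii), the same analysis shows that pseudo-reflections in $F$ are exactly the hyperelliptic involutions fixing the marked points.  Take $\cS \in \Moduli^{\sym}_{g,n} \setminus \cH_{g,n}$ with $F \neq 1$.  I would argue that either $F$ contains no pseudo-reflections, so $\cS$ is singular by Lemma \ref{lemma:notmanifold}, or $F$ contains a hyperelliptic involution fixing the $p_i$, in which case $\cS \in \cH_{g,n}$.  There is a mixed case, where $F$ contains pseudo-reflections but is not generated by them.  For that I would use Remark \ref{remark:notmanifold} (Chevalley--Shephard--Todd) in place of Lemma \ref{lemma:notmanifold}, but that point already lies in $\cH_{g,n}$, so this suffices.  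I expect the Riemann--Hurwitz case analysis bounding $\operatorname{codim} W^f$ to be the main obstacle.
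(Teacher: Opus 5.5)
Your outline follows the paper's route. You use the local model $W/F$, show that every nontrivial $f$ has a fixed space of codimension at least $2$ (outside the hyperelliptic exception when $g+n=3$), and apply Lemma \ref{lemma:notmanifold}, with $\dim W^f = 3h-3+r$ computed from $T = S/\langle f\rangle$. The paper gets this dimension on the dual side: the $f$-invariant quadratic differentials are exactly pullbacks of quadratic differentials on $T$ with at most simple poles at the $r$ branch values. Your orbifold-Teichm\"{u}ller description gives the same number, but it needs an identification of the fixed locus with an orbifold Teichm\"{u}ller space, which the direct computation avoids. Your separate faithfulness step and the Chevalley--Shephard--Todd discussion are unnecessary. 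Faithfulness follows from $\dim W^f < \dim W$. Also, whenever $g+n\geq 4$ or $\cS\notin\cH_{g,n}$, no element of $F$ is a pseudo-reflection.

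The genuine problem is that you conflate $\PMod_{g,n}$ with $\Mod_{g,n}$. Elements of $\Aut(\cS)$ fix each $p_i$, so marked points are never swapped. Each $p_i$ is therefore a ramification point of $S \to S/\langle f\rangle$ of index $d$, and $m=r$. If swapped pairs were allowed, the inequality you want would be false. Take $g=n=2$ and $p_2=\iota(p_1)$ with $\iota$ the hyperelliptic involution: the fixed locus of $\iota$ in $\Teich_{2,2}$ has dimension $3\cdot 0-3+7=4$, which is codimension $1$ even though $g+n=4$. The same conflation makes your claim about $(1,2)$ wrong. The involution $z\mapsto p_1+p_2-z$ that every $(E;p_1,p_2)$ has swaps the punctures, so it lies in $\Mod_{1,2}$ but not $\PMod_{1,2}$. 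No nontrivial element of $\PMod_{1,2}$ acts trivially on $\Teich_{1,2}$.

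Fixedness of the $p_i$ is precisely what makes the case analysis you defer go through:
\begin{itemize}
\item Riemann--Hurwitz with all $e_i\geq 2$ gives $r\leq (4g-4)/d-4h+4\leq 2g-4h+2$. So the codimension is at least $g+n+h-2$, which settles both $g+n\geq 4$ and $h\geq 1$.
\item For $h=0$ and $(g,n)\in\{(3,0),(2,1)\}$, a non-hyperelliptic $f$ has $d\geq 3$. This improves the bound to $r\leq 7$, resp.\ $r\leq 5$.
\item For $(1,2)$, the bound $r\leq 4$ does not improve with $d$, so ``the involution is extremal'' is not automatic. You rule out $r=4$ because then all $e_i=2$, while $d$ equals the ramification index at the fixed point $p_1$. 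This forces $d=2$, so $f$ is hyperelliptic.
\item Fixedness of the $p_i$ also excludes $h=1$, $r=0$ when $g=1$. In that case the count $3h-3+r$ would be wrong.
\end{itemize}
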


\begin{remark}
As was noted by Rauch \cite{RauchSingular} in the closed case, a slightly more careful analysis
shows that if $g+n = 3$ and $\cS$ is a point of $\cH_{g,n}$ whose automorphism group
is generated by the hyperelliptic involution, then $\cS$ is a smooth point of $\Moduli_{g,n}$.
\end{remark}

Before proving Theorem \ref{theorem:rauchsym}, we point out the following corollary:

\begin{corollary}
\label{corollary:symclosed}
Fix $g \geq 1$ and $n \geq 0$ such that $\Sigma_{g,n}$ is hyperbolic.  Assume
that $g+n \geq 3$.  Then $\Moduli_{g,n}^{\sym}$ is a proper Zariski closed subvariety of $\Moduli_{g,n}$.
\end{corollary}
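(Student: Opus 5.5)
The plan is to deduce Corollary \ref{corollary:symclosed} directly from Theorem \ref{theorem:rauchsym} together with Lemma \ref{lemma:hyperclosed}. There are two things to check: that $\Moduli_{g,n}^{\sym}$ is Zariski closed, and that it is proper, i.e.\ not all of $\Moduli_{g,n}$.

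For closedness, I would use the fact that the singular locus of any algebraic variety is Zariski closed (a standard fact, via the Jacobian criterion on affine charts). If $g+n \geq 4$, Theorem \ref{theorem:rauchsym}(i) identifies $\Moduli_{g,n}^{\sym}$ with the singular locus, so it is closed. If $g+n=3$, Theorem \ref{theorem:rauchsym}(ii) writes $\Moduli_{g,n}^{\sym}$ as the union of the singular locus and $\cH_{g,n}$. The set $\cH_{g,n}$ is Zariski closed by Lemma \ref{lemma:hyperclosed}, so this finite union of closed sets is closed.

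For properness, I would argue by dimension. The moduli space $\Moduli_{g,n}$ is irreducible, being the image of the connected complex manifold $\Teich_{g,n}$, of dimension $3g+n-3$. A variety is smooth on a nonempty Zariski open subset, so its singular locus is a proper closed subset, and by irreducibility it has dimension strictly less than $3g+n-3$. In case (ii), $\cH_{g,n}$ has dimension at most $2g-1$ by Lemma \ref{lemma:hyperclosed}. We have $2g-1 < 3g+n-3$ exactly when $g+n > 2$, which holds here. A union of two closed subsets, each of dimension less than $\dim \Moduli_{g,n}$, cannot equal the irreducible variety $\Moduli_{g,n}$. This proves properness in both cases.

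The only real input is Rauch's theorem itself; given it, the corollary is formal. The one point needing care is irreducibility of $\Moduli_{g,n}$. It follows from the connectedness of $\Teich_{g,n}$, together with the fact from \S\ref{section:algebravsanalysis} that irreducibility can be checked analytically and that $\Teich_{g,n}/\PMod_{g,n}$ is the image of the irreducible manifold $\Teich_{g,n}$.
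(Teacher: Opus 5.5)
Your proposal is correct and follows the paper's proof essentially verbatim. Closedness comes from Theorem~\ref{theorem:rauchsym}, together with Lemma~\ref{lemma:hyperclosed} when $g+n=3$, and properness comes from the dimension count showing that both the singular locus and $\cH_{g,n}$ (of dimension at most $2g-1$) have dimension below $\dim \Moduli_{g,n} = 3g-3+n$; your explicit justification that $\Moduli_{g,n}$ is irreducible is a detail the paper leaves implicit here and only records later in Lemma~\ref{lemma:modulinormal}.
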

\begin{proof}
When $g+n \geq 4$ this is immediate from Theorem \ref{theorem:rauchsym} together with
the fact that the singular locus of a variety is always a proper Zariski closed subvariety.
When $g+n = 3$, Theorem \ref{theorem:rauchsym}
along with the fact that $\cH_{g,n}$ is a Zariski closed subspace of $\Moduli_{g,n}$ (Lemma \ref{lemma:hyperclosed})
implies that $\Moduli_{g,n}^{\sym}$ is a Zariski closed subvariety of $\Moduli_{g,n}$.  To see that
it is a proper subvariety, observe the following:
\begin{itemize}
\item The dimension of each irreducible component of the singular locus of a variety $X$ is strictly
smaller than $\dim(X)$.  In particular, the dimension of each component of the singular locus of
$\Moduli_{g,n}$ is strictly less than $\dim(\Moduli_{g,n}) = 3g-3+n = 2g$.
\item By Lemma \ref{lemma:hyperclosed}, the dimension of $\cH_{g,n}$ is at most $2g-1 < \dim(\Moduli_{g,n})$.\qedhere
\end{itemize}
\end{proof}

\begin{proof}[Proof of Theorem \ref{theorem:rauchsym}]
Consider a point $x$ of Teichm\"{u}ller space $\Teich_{g,n}$.  Letting
$W = T_x \Teich_{g,n}$ be the tangent space to $\Teich_{g,n}$ at $x$, the finite stabilizer
group $F = (\PMod_{g,n})_x$ acts linearly on $T_x \Teich_{g,n}$.  There exists an $F$-invariant
open neighborhood $U$ of the origin in $W$ such that the image of $x$ in $\Moduli_{g,n}$ has
an analytic neighborhood isomorphic to $U/F$.  It follows that the image of $x$ in
$\Moduli_{g,n}$ is a singular point if and only if the origin in $W$ maps to a singular point
of $W/F$.  We will check this in the cases claimed by the theorem 
using Lemma \ref{lemma:notmanifold}.  This requires checking that the fixed subspace
of each nontrivial $f \in F$ has codimension at least $2$.

The codimensions of fixed subspaces are unchanged by dualizing, so it is enough
to verify the hypotheses of Lemma \ref{lemma:notmanifold} for the action of
$F$ on the cotangent space $W^{\ast}$.
The point $x \in \Teich_{g,n}$ corresponds to a point $\cS = (S;p_1,\ldots,p_n)$ of
$\Moduli_{g,n}$ together with an isotopy class of orientation-preserving
diffeomorphism $\Sigma_{g,n} \cong S \setminus \{p_1,\ldots,p_n\}$ taking the
$i^{\text{th}}$ puncture of $\Sigma_{g,n}$ to $p_i$.  We have $F = \Aut(\cS)$, 
and the cotangent space to $\Teich_{g,n}$ at $x$ is the space
$Q(S;p_1,\ldots,p_n)$ of meromorphic quadratic differentials on $S$ 
having at most a simple pole at each $p_i$ and no other poles
(see \cite[Proposition 6.6.2]{HubbardTeich}).
In light of Lemma \ref{lemma:notmanifold}, it is therefore enough to prove Lemma \ref{lemma:invariantdifferentials}
below.
\end{proof}

The above proof used:

\begin{lemma}
\label{lemma:invariantdifferentials}
Fix $g \geq 1$ and $n \geq 0$ such that $g+n \geq 3$.  Let $\cS = (S;p_1,\ldots,p_n)$
be a point of $\Moduli_{g,n}$ and let $f\colon S \rightarrow S$ be a nontrivial
automorphism of $\cS$.  If $g+n = 3$, then assume that $f$ is not a hyperelliptic
involution.  Then the subspace
\[Q(S;p_1,\ldots,p_n)^f = \Set{$q \in Q(S;p_1,\ldots,p_n)$}{$f^{\ast}(q) = q$}\]
has codimension at least $2$ in $Q(S;p_1,\ldots,p_n)$.
\end{lemma}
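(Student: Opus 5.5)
The plan is to compute $\dim Q(S;p_1,\ldots,p_n)^f$ exactly, by identifying $f$-invariant quadratic differentials with quadratic differentials on the quotient curve, and then to compare with $\dim Q(S;p_1,\ldots,p_n) = 3g-3+n$ using Riemann--Hurwitz.

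\emph{Reduction to prime order.} If $f$ has order $m$ and $p$ is a prime dividing $m$, then $f' = f^{m/p}$ is a nontrivial automorphism of $\cS$ of prime order $p$. Moreover $Q(S;p_1,\ldots,p_n)^f \subset Q(S;p_1,\ldots,p_n)^{f'}$, so it suffices to prove the lemma for $f'$. The one issue is the hypothesis when $g+n=3$: there we must make sure $f'$ is not a hyperelliptic involution. For $g \geq 2$ the hyperelliptic involution is central and unique, so if $f'$ were hyperelliptic we would pick a different prime or handle $f$ directly. I expect this to be a small side check in the low-complexity cases.

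\emph{Descending to the quotient.} Assume $f$ has prime order $p$. Let $T = S/\langle f\rangle$ have genus $h$, and let $r$ be the number of fixed points of $f$. Since $f$ fixes each $p_i$, we have $n \leq r$. Riemann--Hurwitz gives
\[2g-2 = p(2h-2) + (p-1)r.\]
Next I would do a local computation. At a fixed point with local coordinate $z$ and quotient coordinate $w = z^p$, an invariant quadratic differential with at most a simple pole at the point descends to a meromorphic quadratic differential on $T$ with at most a simple pole at the image. Away from the fixed points there are no poles. Conversely, every such differential on $T$ pulls back to one that is allowed on $S$. Hence $Q(S;p_1,\ldots,p_n)^f \cong Q(T;\text{$r$ branch values})$, which has dimension $3h-3+r$ by Riemann--Roch. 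Here $(h,r)$ is automatically in the stable range, with the exception $h=1$, $r=0$, which would force $g=1$ and $n=0$, contradicting $g+n \geq 3$.

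\emph{The inequality.} Substituting $3g-3$ from Riemann--Hurwitz, the codimension is
\[3g-3+n-(3h-3+r) = 3(p-1)(h-1) + \tfrac{(3p-5)r}{2} + n.\]
I would then split into cases.

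\begin{itemize}
\item If $h \geq 2$, the expression is clearly at least $3$.
\item If $h=1$, then $r > 0$ (as above), and the expression is $r(3p-5)/2 + n$. For $p \geq 3$ this is at least $2$. For $p=2$, $r$ is even, so $r \geq 2$. If $r=2$ and $n=0$, Riemann--Hurwitz gives $g=2$, which is excluded. Otherwise the codimension is at least $2$.
\item If $h=0$ and $p=2$, then $r = 2g+2$, $f$ is a hyperelliptic involution fixing all $p_i$, and the codimension is $g+n-2$. This is at least $2$ exactly when $g+n \geq 4$, and the case $g+n=3$ is excluded by hypothesis.
\item If $h=0$ and $p \geq 3$, then $r \geq 3$ and the codimension is $-3(p-1) + r(3p-5)/2 + n$. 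When $r \geq 4$ this is at least $3p-7+n \geq 2$. When $r=3$ it equals $(3p-9)/2+n$. For $p \geq 5$ this is at least $3$. For $p=3$, Riemann--Hurwitz gives $g=1$, so $n \geq 2$ and the codimension is at least $2$.
\end{itemize}

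\emph{Main obstacle.} The step I expect to be hardest is making the descent isomorphism fully rigorous, in particular the local pole-order bookkeeping at fixed points. A second delicate point is the interaction between the prime-order reduction and the hyperelliptic exception when $g+n=3$. For example, if $f$ has order $4$ with $f^2$ hyperelliptic, I would handle $f$ directly with the same orbifold dimension count, $3h-3+(\text{number of cone points})$, rather than passing to $f^2$.
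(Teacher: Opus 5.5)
Your prime-order computation is correct, and it takes a different route from the paper. The paper never reduces to prime order. It keeps $f$ of arbitrary order $d$, tracks variable ramification indices $e_i$ over the branch values, shows $\dim Q^f=3h-3+r$ in that generality, and uses $e_i\geq 2$ in Riemann--Hurwitz to get $r\leq \frac{4g-4}{d}-4h+4$. That settles everything except $g+n=3$, $h=0$, which it handles by hand using $d\geq 3$. Your exact formula $3(p-1)(h-1)+\frac{(3p-5)r}{2}+n$ and your case analysis are right, and this is cleaner in the generic case.

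\textbf{The gap.} The prime-order reduction fails exactly where the hypothesis matters. Suppose $g+n=3$ and $f$ has order $2^k$ with $k\geq 2$, and $f^{2^{k-1}}$ is the hyperelliptic involution. Then that involution is the only prime-order element of $\langle f\rangle$, so there is no other prime to pick. Such $f$ exist in all three cases:
\begin{itemize}
\item $(3,0)$: $f(x,y)=(-x,iy)$ on $y^2=x(x^2-a_1)(x^2-a_2)(x^2-a_3)$;
\item $(2,1)$: the same $f$ on $y^2=x(x^2-a_1)(x^2-a_2)$, with $p_1=(0,0)$;
\item $(1,2)$: $z\mapsto iz$ on $\C/\Z[i]$, with $p_1=0$ and $p_2=\frac{1+i}{2}$.
\end{itemize}
Saying you would ``handle $f$ directly'' is therefore not a side check. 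Your descent and your Riemann--Hurwitz identity assume every ramification index equals $p$ and that branch values correspond bijectively to fixed points. Both fail when $f$ has order $4$.

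\textbf{How to close it.} You need the general-order version of your argument, which is what the paper does.
\begin{itemize}
\item Redo the local computation with $w=z^{e}$ for each ramification index $e\geq 2$. You still get exactly the condition $k\geq -1$, so $\dim Q^f=3h-3+r$ with $r$ the number of branch values.
\item Here $h=0$, since $T$ is a quotient of $S/\langle f^{d/2}\rangle\cong\bbP^1$, and $d\geq 4$.
\item Riemann--Hurwitz with each $1-1/e_i\geq 1/2$ gives $r\leq \frac{4g-4}{d}+4\leq g+3$.
\end{itemize}
This gives $r\leq 6\leq 7$ for $(3,0)$ and $r\leq 5$ for $(2,1)$, as required. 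For $(1,2)$ it only gives $r\leq 4$, but you need $r\leq 3$. There you must also use that $p_1,p_2$ are fixed by $f$ itself, so they lie over two distinct branch values with $e=d$. Then $\sum(1-1/e_i)=2$ leaves room for at most one more branch value. Equivalently, as in the paper, $r=4$ would force every $e_i=2$, hence $d=2$.
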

\begin{proof}
By Riemann--Roch, the vector space $Q(S;p_1,\ldots,p_n)$ is $(3g-3+n)$-dimensional.  We must therefore
prove that $Q(S;p_1,\ldots,p_n)^f$ is at most $(3g-5+n)$-dimensional.  Let $T$ be the quotient of $S$
by $f$ and let $\pi\colon S \rightarrow T$ be the projection.  Letting $d \geq 2$ be the order
of $f$, the map $\pi$ is a degree-$d$ branched cover.  The branch points of $\pi$ in $S$
must include $p_1,\ldots,p_n$.  Let $b_1,\ldots,b_r \in T$
be the branch values.  For $1 \leq i \leq r$, the map $\pi$ has the same ramification
degree $e_i \geq 2$ at each point of $\pi^{-1}(b_i)$.  This $e_i$ divides $d$, and
there are $d/e_i$ points in the fiber $\pi^{-1}(b_i)$ each of which is fixed by $f^{d/e_i}$.
Finally, let $h \geq 0$ be the genus of $T$.  The Riemann--Hurwitz formula therefore
says that
\[2g-2 = d\left(2h-2 + \sum_{i=1}^r\left(1-\frac{1}{e_i}\right)\right).\]
This formula implies that $T \setminus \{b_1,\ldots,b_r\} \cong \Sigma_{h,r}$ is hyperbolic.
Indeed:
\begin{itemize}
\item If $h=0$, then it implies that $\sum_{i=1}^r(1-1/e_i) \geq 2$ and thus
that $r \geq 3$.
\item If $h=1$ and $r=0$, then it implies that $g=1$.  It is impossible in this case for $r=0$; indeed,
our assumption $g+n \geq 3$ implies that $n \geq 2$, and $\pi$ must be ramified at each of the $n$ marked points
$p_1,\ldots,p_n$.
\end{itemize}
Having introduced all this notation, we divide the proof into several steps:

\begin{step}{1}
\label{step.invariantdifferentials.1}
We prove that $Q(S;p_1,\ldots,p_n)^f$ has dimension $3h-3+r$.
\end{step}

Each element of $Q(S;p_1,\ldots,p_n)^f$ is $\pi^{\ast}(q)$ for some meromorphic quadratic
differential $q$ on $T$.  For a meromorphic quadratic differential $q$
on $T$, we will prove that $\pi^{\ast}(q) \in Q(S;p_1,\ldots,p_n)$ if and only
if $q$ has at most a simple pole at each point $b_1,\ldots,b_r$ and
no other poles.  The above will imply that $Q(S;p_1,\ldots,p_n)^f$ is isomorphic
to the vector space of meromorphic quadratic differentials on $T$ with at most
a simple pole at each $b_i$ and no other poles.  
Riemann--Roch says that this has dimension $3h-3+r$, so this will prove the step.

Since $\pi$ must take each $p_i$ to some $b_j$, 
for a meromorphic quadratic differential $q$ on $T$ such that $\pi^{\ast}(q) \in Q(S;p_1,\ldots,p_n)$
it is clear that $q$ cannot have any poles except possibly at the $b_i$.  We now
examine what must hold at a branch value $b_i$.  Consider some $\tb_i \in \pi^{-1}(b_i)$, so
$\pi$ has ramification index $e_i$ at $\tb_i$.  We can choose a holomorphic coordinate
$z$ on $S$ such that $z=0$ is $\tb_i$ and a holomorphic coordinate $w$ on
$T$ such that $w=0$ is $b_i$ and such that $\pi^{\ast}(w) = z^{e_i}$.
This implies that $\pi^{\ast}(\dw) = e_i z^{e_i-1} \dz$, so for $k \in \Z$ we have
\[\pi^{\ast}(w^k \dw^2) = z^{e_i k} e_i^2 z^{2e_i-2} \dz^2 = e_i^2 z^{e_i(k+2)-2} \dz^2.\]
There are now two cases:
\begin{itemize}
\item If $\tb_i \notin \{p_1,\ldots,p_n\}$, then elements of $Q(S;p_1,\ldots,p_n)$
must be holomorphic at $\tb_i$.  For $\pi^{\ast}(w^k \dw^2)$ to be holomorphic at $\tb_i$,
we must have $e_i(k+2) \geq 2$.  Since $e_i \geq 2$, this will hold exactly when
$k \geq -1$.
\item If $\tb_i \in \{p_1,\ldots,p_n\}$, then elements of $Q(S;p_1,\ldots,p_n)$
are allowed to have at most a simple pole at $\tb_i$.  For $\pi^{\ast}(w^k \dw^2)$ to
have at most a simple pole at $\tb_i$, we must have $e_i(k+2) \geq 1$.  Since $e_i \geq 2$, this will hold exactly when
$k \geq -1$.
\end{itemize}
The step follows from the above two calculations.  We remark that these
calculations also show that an $f$-invariant element of
$Q(S;p_1,\ldots,p_n)$ is holomorphic at each $p_i$.
Consequently, $Q(S;p_1,\ldots,p_n)^f = Q(S)^f$.  Throughout the rest of the
proof, we will therefore just talk about $Q(S)^f$.

\begin{step}{2}
\label{step.invariantdifferentials.2}
We prove that $r \leq 2g-4h+2$.
\end{step}

As we stated before Step \ref{step.invariantdifferentials.1}, the Riemann--Hurwitz formula
says that
\[2g-2 = d\left(2h-2 + \sum_{i=1}^r\left(1-\frac{1}{e_i}\right)\right).\]
Since $e_i \geq 2$ for all $1 \leq i \leq r$, we have
\[\sum_{i=1}^r\left(1-\frac{1}{e_i}\right) \geq \frac{r}{2}.\]
Substituting this into the Riemann--Hurwitz formula, we get
$2g-2 \geq d\left(2h-2+ \frac{r}{2}\right)$.
Rearranging this and using the fact that $d \geq 2$, we get
\[r \leq \frac{4g-4}{d} - 4h + 4 \leq (2g-2)-4h+4 = 2g-4h+2.\]

\begin{step}{3}
\label{step.invariantdifferentials.3}
We complete the proof in the case where either $g+n \geq 4$ or $g+n=3$ and $h \geq 1$.
\end{step}

Recall that our goal is to prove that the dimension of $Q(S)^f$ is at most $3g-5+n$.
In Step \ref{step.invariantdifferentials.1} we proved that the dimension of
$Q(S)^f$ is $3h-3+r$.  Combining this with the inequality $r \leq 2g-4h+2$
from Step \ref{step.invariantdifferentials.2}, we see that the dimension
of $Q(S)^f$ is at most $3g-5+n$ if
\[3h-3+(2g-4h+2) \leq 3g-5+n.\]
Rearranging this, we see that this is equivalent to $g+n+h \geq 4$.  This
inequality holds if either $g+n \geq 4$ or $g+n=3$ and $h \geq 1$, as desired.

\begin{step}{4}
\label{step.invariantdifferentials.4}
We complete the proof when $(g,n) = (3,0)$ and $h=0$.
\end{step}

In this case, our assumptions say that the automorphism $f\colon S \rightarrow S$ is not
the hyperelliptic involution.  Since $T = S/f$ has genus $h=0$, this means that
the projection $\pi\colon S \rightarrow T = \bbP^1$ cannot have degree $2$.
We therefore have $d \geq 3$.

Step \ref{step.invariantdifferentials.1} says that the dimension of
$Q(S)^f$ is $3h-3+r = r-3$, and we must prove that this is at most $3g-5+n = 4$.  In other words,
we must prove that $r \leq 7$.  Unfortunately, the bound from
Step \ref{step.invariantdifferentials.2} is only $r \leq 8$.  Revisiting
the proof of Step \ref{step.invariantdifferentials.2}, its final line says
that
\[r \leq \frac{4g-4}{d} - 4h + 4 = \frac{8}{d} + 4 .\]
Since $d \geq 3$, the right-hand side of this is at most $8/3+4 \leq 7$,
as desired.

\begin{step}{5}
\label{step.invariantdifferentials.5}
We complete the proof when $(g,n) = (2,1)$ and $h=0$.
\end{step}

Again, our assumptions say that $f$ is not the hyperelliptic involution, so $d \geq 3$.
Step \ref{step.invariantdifferentials.1} says that the dimension of
$Q(S)^f$ is $3h-3+r = r-3$, and we must prove that this is at most $3g-5+n = 2$.  In other words,
we must prove that $r \leq 5$.  The final line of the proof of Step \ref{step.invariantdifferentials.2}
together with the fact that $d \geq 3$ implies that
\[r \leq \frac{4g-4}{d} - 4h + 4 = \frac{4}{d}+4 \leq \frac{4}{3}+4 = 5+\frac{1}{3}.\]
Since $r$ is an integer, this implies that $r \leq 5$, as desired.

\begin{step}{6}
\label{step.invariantdifferentials.6}
We complete the proof when $(g,n) = (1,2)$ and $h=0$.
\end{step}

Again, our assumptions say that $f$ is not the hyperelliptic involution.
Step \ref{step.invariantdifferentials.1} says that the dimension of
$Q(S)^f$ is $3h-3+r = r-3$, and we must prove that this is at most $3g-5+n = 0$.  In other words,
we must prove that $r \leq 3$.  The Riemann--Hurwitz formula in this case is
\[0 = d\left(-2 + \sum_{i=1}^r\left(1-\frac{1}{e_i}\right)\right).\]
Dividing by $d$ and rearranging, we get
\[\sum_{i=1}^r\left(1-\frac{1}{e_i}\right) = 2.\]
Since each $e_i$ has $e_i \geq 2$, we must have $r \leq 4$.  To rule out the
case $r=4$, note that if $r=4$ then each $e_i$ equals $2$.  The order $d$ of the
automorphism $f\colon S \rightarrow S$ must equal the ramification index
at one of the two marked fixed points, so we deduce that $d=2$.  This implies
that $f$ is the hyperelliptic involution, contrary to our assumptions.
\end{proof}

\subsection{Locus of curves with automorphisms, edge cases}

We now turn to the edge cases $\Moduli_2$ and $\Moduli_{1,1}$.
In these cases, every point in these moduli spaces represents a hyperelliptic curve,
which thus has an automorphism group of order at least $2$.
This is why we defined $\Moduli_2^{\sym}$ and $\Moduli_{1,1}^{\sym}$
to be the loci of points in these moduli spaces corresponding to
curves whose automorphism group has order greater than $2$.  Just
like for the other cases, this is a proper Zariski closed subspace:

\begin{lemma}
\label{lemma:symclosedlowgenus}
The following hold:
\begin{itemize}
\item[(i)] The subspace $\Moduli_2^{\sym}$ is a proper Zariski
closed subspace of $\Moduli_2$.
\item[(ii)] The subspace $\Moduli_{1,1}^{\sym}$ is a proper
Zariski closed subspace of $\Moduli_{1,1}$.
\end{itemize}
\end{lemma}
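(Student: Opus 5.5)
The plan is to use the explicit descriptions $\Moduli_2 \cong \Moduli_{0,6}/S_6$ (Example \ref{example:moduli2}) and $\Moduli_{1,1} \cong \Moduli_{0,4}/S_3$ (Example \ref{example:moduli11}). These translate the automorphism condition into a condition on configurations of points in $\bbP^1$, where closedness becomes transparent.

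\textbf{Part (i).} Let $S$ be a genus-$2$ curve with hyperelliptic map $f\colon S \rightarrow \bbP^1$ and branch set $P \subset \bbP^1$ of $6$ points. The hyperelliptic involution $i$ is central in $\Aut(S)$, since $f$ is unique up to automorphisms of $\bbP^1$. Hence every automorphism of $S$ descends to an element of $\PGL_2(\C)$ preserving $P$. Conversely, every element of $\PGL_2(\C)$ preserving $P$ lifts to $S$, because $S$ is determined up to isomorphism by $P$. This gives an exact sequence
\[1 \rightarrow \Span{i} \rightarrow \Aut(S) \rightarrow \operatorname{Stab}_{\PGL_2(\C)}(P) \rightarrow 1.\]
Thus $|\Aut(S)|>2$ if and only if some nontrivial $\phi \in \PGL_2(\C)$ preserves $P$. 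In terms of an ordered lift $(p_1,\ldots,p_6) \in \Moduli_{0,6}$, this happens if and only if for some $\sigma \in S_6$ with $\sigma \neq 1$, there exists $\phi \in \PGL_2(\C)$ with $\phi(p_j)=p_{\sigma(j)}$ for all $j$. Using the model of Example \ref{example:moduli0n} with $p_1,p_2,p_3 = 0,1,\infty$, the element $\phi$ is uniquely determined by where it sends $0,1,\infty$. The condition is therefore that the unique Möbius map sending $(0,1,\infty)$ to $(p_{\sigma(1)},p_{\sigma(2)},p_{\sigma(3)})$ also sends $p_j$ to $p_{\sigma(j)}$ for $j=4,5,6$. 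This is a polynomial (cross-ratio) condition, so it defines a Zariski closed set $Z_\sigma \subset \Moduli_{0,6}$.

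We have to rule out the degenerate case $\sigma \neq 1$ with $\phi = 1$. This cannot occur, since $\phi = 1$ forces $\sigma = 1$ because the points are distinct. So the preimage of $\Moduli_2^{\sym}$ is $\bigcup_{\sigma \neq 1} Z_\sigma$, which is closed and $S_6$-invariant. Its image under the finite quotient map $\Moduli_{0,6}\rightarrow\Moduli_2$ is closed by Lemma \ref{lemma:properclosed}.

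For properness of the subvariety, I would argue by dimension. The set $Z_\sigma$ imposes three independent conditions, or at least one nontrivial condition, on a $3$-dimensional space. More concretely, it suffices to exhibit one configuration with trivial projective stabilizer, or to note that the finitely many $\phi$ preserving a generic $6$-point set must be trivial. I expect verifying that each $Z_\sigma$ is a proper subset to be the main (mild) obstacle. I would handle it by a direct choice: take $p_4,p_5,p_6$ generic and note that $\phi$ is forced by the images of $0,1,\infty$. Then $\phi(p_4)=p_{\sigma(4)}$ with $p_4$ free is a nontrivial condition, except when $\sigma$ fixes $4$. In that case one uses another index moved by $\sigma$, or $\sigma$ moves some element of $\{1,2,3\}$ and one argues symmetrically.

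\textbf{Part (ii).} The same argument applies with $\Moduli_{1,1}\cong \Moduli_{0,4}/S_3$. Here $\Aut(E)$ modulo the inversion $i$ injects into the stabilizer in $\PGL_2(\C)$ of the four branch values that fixes the image of $p_1$. Conversely, such Möbius maps lift. So $|\Aut(E)|>2$ corresponds to the closed set $\bigcup_{\sigma\neq 1} Z_\sigma \subset \Moduli_{0,4} \cong \C\setminus\{0,1\}$, which is cut out by finitely many nontrivial polynomial equations in $\lambda$. This set is finite (the familiar $\lambda\in\{-1,2,1/2,e^{\pm \pi i/3}\}$), so it is a proper closed subset. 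Its image in $\Moduli_{1,1}$ is then a proper Zariski closed subspace.
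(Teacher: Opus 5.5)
Your route is the paper's. You identify the preimage of $\Moduli_2^{\sym}$ in $\Moduli_{0,6}$ with the union of the fixed loci of the nontrivial elements of $S_6$, observe that this union is Zariski closed, and push it forward along the finite quotient map $\Moduli_{0,6}\rightarrow\Moduli_2$. Your exact sequence $1\rightarrow\Span{i}\rightarrow\Aut(S)\rightarrow\mathrm{Stab}(P)\rightarrow 1$ and the cross-ratio description of $Z_\sigma$ make explicit what the paper leaves implicit. Part (ii), with its explicit finite set of $\lambda$-values, is fine. For properness in (i), the paper simply asserts that a generic $6$-point set has no nontrivial M\"obius symmetry. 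The argument you offer for that assertion does not work as written.

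Your claim that $\phi(p_4)=p_{\sigma(4)}$ is a nontrivial condition whenever $\sigma(4)\neq 4$ is false. For $\sigma=(12)(34)$ you get $\phi(z)=p_4(1-z)/(p_4-z)$, and then $\phi(p_4)=\infty=p_{\sigma(4)}$ for every configuration. The reason is that every double transposition of four distinct points of $\bbP^1$ is realized by a M\"obius map, since the cross-ratio is invariant under the Klein four-group. So a condition involving only $0,1,\infty,p_j$ cannot detect such a $\sigma$. For the same reason, $Z_\sigma$ need not impose three independent conditions: for example, $Z_{(12)(34)(56)}$ is the hypersurface $\phi(p_5)=p_6$.

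Here is a correct version of your argument. The condition at $j\in\{4,5,6\}$ is $\mathrm{CR}(p_1,p_2,p_3,p_j)=\mathrm{CR}(p_{\sigma(1)},p_{\sigma(2)},p_{\sigma(3)},p_{\sigma(j)})$.
\begin{itemize}
\item If $\{\sigma(1),\sigma(2),\sigma(3),\sigma(j)\}\neq\{1,2,3,j\}$, the right side varies nonconstantly with some free coordinate $p_k$, $k\in\{4,5,6\}\setminus\{j\}$. The left side does not depend on $p_k$, so the condition is not an identity.
\item If the two sets agree, the condition is an identity exactly when $\sigma$ acts on $\{1,2,3,j\}$ by the identity or a double transposition.
\end{itemize}
Suppose all three conditions were identities. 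Then $\sigma$ would preserve $\{1,2,3,4\}\cap\{1,2,3,5\}=\{1,2,3\}$, and hence fix $4$, $5$ and $6$. A double transposition has no fixed point, so $\sigma$ would act as the identity on each $\{1,2,3,j\}$, that is, $\sigma=1$. Therefore every $Z_\sigma$ with $\sigma\neq1$ is a proper closed subset. Since $\Moduli_{0,6}$ is irreducible, the union $\bigcup_{\sigma\neq1}Z_\sigma$ is proper. Because this union is the full preimage of $\Moduli_2^{\sym}$, its image $\Moduli_2^{\sym}$ is proper as well.
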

\begin{proof}
For (i), recall from Example \ref{example:moduli2}
that $\Moduli_2 = \Moduli_{0,6}/S_6$.  
The locus $\Moduli_2^{\sym}$ is the image of the subspace of $\Moduli_{0,6}$ consisting of points
that are fixed by some nontrivial element of $S_6$.  Since $S_6$ acts
algebraically on $\Moduli_{0,6}$, this locus is a Zariski closed
subspace $Z \subset \Moduli_{0,6}$.  It is proper since a generic
set of $6$ points on $\bbP^1$ cannot be nontrivially permuted by a M\"{o}bius
transformation.  The map $\Moduli_{0,6} \rightarrow \Moduli_2$
is a finite map, so the image $\Moduli_2^{\sym}$ of $Z$ is also a proper
Zariski closed subspace (see \S \ref{section:finitemaps}).

The proof of (ii) is similar, using the fact
that $\Moduli_{1,1} \cong \Moduli_{0,4}/S_3$ (see Example \ref{example:moduli11}).
\end{proof}

\begin{remark}
Igusa \cite{IgusaGenus2} proved that the singular locus of $\Moduli_2$ consists
of the single point representing
the normalization of the projective closure of the plane curve $y^2=x^5-1$. 
\end{remark}

\begin{remark}
Another way to proceed for $\Moduli_{1,1}$ is to view it as $\bbH^2/\PSL_2(\Z)$.
The group $\PSL_2(\Z)$ has two conjugacy classes of nontrivial finite subgroups,
one cyclic of order $2$ and the other cyclic of order $3$.  From this, we see that
$\Moduli_{1,1}^{\sym}$ consists of $2$ points, namely the images of
the fixed points of representatives of these two conjugacy classes of finite subgroups.
From this point of view we can also see that $\Moduli_{1,1}$ is smooth,
so its singular locus is empty.
\end{remark}

\section{Finite covers of the moduli space of curves}
\label{section:modulicover}

Fix $g,n \geq 0$ such that $\Sigma_{g,n}$ is hyperbolic.
The moduli space $\Moduli_{g,n}$ is typically not smooth.
In this section, we construct finite ramified covers of $\Moduli_{g,n}$ that are smooth.

\subsection{Covers of moduli space}

To clarify what we are doing, we will temporarily again denote the analytification of
$\Moduli_{g,n}$ by $\Moduli_{g,n}^{\an}$.  Recall that $\PMod_{g,n}$ acts on
the Teichm\"{u}ller space $\Teich_{g,n}$ and $\Moduli_{g,n}^{\an} = \Teich_{g,n}/\PMod_{g,n}$.
For a finite-index subgroup $G < \PMod_{g,n}$, let $\Moduli_{g,n}(G,\an) = \Teich_{g,n}/G$.
This is an analytic space, and the projection $\Moduli_{g,n}(G,\an) \rightarrow \Moduli_{g,n}^{\an}$
is a finite map.  We will soon prove that the projection $\pi\colon \Moduli_{g,n}(G,\an) \rightarrow \Moduli_{g,n}^{\an}$
has a unique algebraization, but first we record some simple properties of $\Moduli_{g,n}(G,\an)$:

\begin{lemma}
\label{lemma:coverproperties}
Fix $g,n \geq 0$ such that $\Sigma_{g,n}$ is hyperbolic.  Let $G < \PMod_{g,n}$
be a finite-index subgroup.  Assume that $G$ is torsion-free.  Then the following
hold:
\begin{itemize}
\item[(i)] The space $\Moduli_{g,n}(G,\an)$ is a smooth complex manifold.
\item[(ii)] The space $\Moduli_{g,n}(G,\an)$ is a $K(G,1)$.
\end{itemize}
\end{lemma}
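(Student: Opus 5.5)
The plan is to deduce both parts from the fact that a torsion-free finite-index subgroup $G < \PMod_{g,n}$ acts freely and properly discontinuously on the contractible complex manifold $\Teich_{g,n}$ by biholomorphisms.

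First, freeness. For $x \in \Teich_{g,n}$, the stabilizer $(\PMod_{g,n})_x$ is finite, since $\PMod_{g,n}$ acts properly discontinuously on $\Teich_{g,n}$. Hence $G_x = G \cap (\PMod_{g,n})_x$ is a finite subgroup of the torsion-free group $G$. A finite subgroup of a torsion-free group is trivial, so $G_x = 1$ and $G$ acts freely.

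Second, the covering map. Since $G < \PMod_{g,n}$, the action of $G$ is still properly discontinuous, and it is free by the previous step. Standard covering space theory then says that $\Teich_{g,n} \rightarrow \Teich_{g,n}/G$ is a regular covering map with deck group $G$.

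Part (i) follows from this. Each point of $\Teich_{g,n}$ has a neighborhood $B$ with $f(B) \cap B = \emptyset$ for all $1 \neq f \in G$, so $B$ maps homeomorphically onto its image. These images form holomorphic charts on $\Teich_{g,n}/G$, and the transition maps are restrictions of elements of $G$, which are biholomorphic. Hausdorffness of the quotient also follows from proper discontinuity. So $\Moduli_{g,n}(G,\an) = \Teich_{g,n}/G$ is a smooth complex manifold, of dimension $3g-3+n$. Equivalently, in the language of Remark \ref{remark:smoothcriterion}, every local model $U/G_x$ has $G_x = 1$, so it is just an open subset of $\C^{3g-3+n}$.

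Part (ii) also follows from the covering map. $\Teich_{g,n}$ is contractible, being homeomorphic to $\R^{6g+2n-6}$, so it is a universal cover of $\Teich_{g,n}/G$. Covering space theory gives $\pi_1(\Teich_{g,n}/G) \cong G$. The long exact sequence of the covering gives $\pi_k(\Teich_{g,n}/G) \cong \pi_k(\Teich_{g,n}) = 0$ for $k \geq 2$. The quotient is a manifold, so it has the homotopy type of a CW complex, and it is therefore a $K(G,1)$.

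There is no serious obstacle here. The only point needing care is the freeness step, which uses that point stabilizers in $\PMod_{g,n}$ are finite, a consequence of proper discontinuity already recorded in the paper.
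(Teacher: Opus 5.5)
Your proof is correct and takes the same approach as the paper. Because $G$ is torsion-free and point stabilizers in $\PMod_{g,n}$ are finite, $G$ acts freely and properly discontinuously on the contractible complex manifold $\Teich_{g,n}$, and (i) and (ii) follow from that; you simply write out the covering-space details that the paper leaves implicit.
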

\begin{proof}
An element of $\PMod_{g,n}$ fixes a point of $\Teich_{g,n}$ if and only if it has finite order.
Since $G < \PMod_{g,n}$ is torsion-free, it therefore acts freely on Teichm\"{u}ller space.  
Since Teichm\"{u}ller space is a complex manifold on which $\PMod_{g,n}$ acts properly discontinuously, this implies (i).
Since Teichm\"{u}ller space is contractible, it also implies (ii).
\end{proof}

To apply Lemma \ref{lemma:coverproperties}, we need a source of finite-index torsion-free subgroups
of $\PMod_{g,n}$.  For $\ell \geq 2$, the {\em level-$\ell$} subgroup of $\PMod_{g,n}$
is\footnote{Sometimes
the level-$\ell$ subgroup is the kernel of the action of $\PMod_{g,n}$ on $\HH_1(\Sigma_g;\Z/\ell)$
obtained by filling in the punctures, but the definition we give works better
in this particular context.}
\[\PMod_{g,n}(\ell) = \Set{$f \in \PMod_{g,n}$}{$f$ acts trivially on $\HH_1(\Sigma_{g,n};\Z/\ell)$}.\]
This is a finite-index normal subgroup of $\PMod_{g,n}$, and we have:

\begin{lemma}[{\cite[Corollary 7.1.B]{IvanovSurvey}}]
\label{lemma:leveltorsionfree}
Fix $g,n \geq 0$ such that $\Sigma_{g,n}$ is hyperbolic.  For $\ell \geq 3$,
the group $\PMod_{g,n}(\ell)$ is torsion-free.
\end{lemma}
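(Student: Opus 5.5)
Since $\PMod_{g,n}(\ell)$ is by definition the kernel of the action of $\PMod_{g,n}$ on the finite group $\HH_1(\Sigma_{g,n};\Z/\ell)$, it has finite index and is normal. It remains to show it is torsion-free for $\ell \geq 3$. Suppose $f \in \PMod_{g,n}(\ell)$ has finite order $d > 1$; replacing $f$ by a power, we may assume $d$ is prime. The plan has three steps: realize $f$ as a holomorphic automorphism, show that it then acts nontrivially and with finite order on an integral homology lattice, and finally use a congruence argument on integral matrices to contradict triviality modulo $\ell$.

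\textbf{Realization.} Since finite-order elements of $\PMod_{g,n}$ fix points of $\Teich_{g,n}$ (as used in the proof of Lemma \ref{lemma:coverproperties}), $f$ is represented by a nontrivial automorphism of some $\cS = (S;p_1,\ldots,p_n)$ fixing each $p_i$.

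\textbf{Nontriviality on integral homology.} We want a free abelian group $H$ on which $f$ acts nontrivially through an integral matrix $A$ with $A^d = 1$. There are two cases.
\begin{itemize}
\item If $g \geq 2$, use the classical fact that a nontrivial automorphism of a closed hyperbolic Riemann surface acts nontrivially on $\HH_1(S;\Z)$. This is proved via the Lefschetz fixed point theorem: if $f$ acted trivially, its Lefschetz number would be $2-2g<0$. However, the fixed points of a nontrivial holomorphic map are isolated with index $+1$, so the Lefschetz number equals the number of fixed points, which is $\geq 0$. This is a contradiction. We take $H = \HH_1(S;\Z)$. Since $\HH_1(\Sigma_{g,n};\Z/\ell)$ surjects onto $\HH_1(S;\Z/\ell)$ equivariantly, $f$ acts trivially on $\HH_1(S;\Z/\ell)$.
\item If $g \leq 1$, the closed surface has too little homology, so we instead work with $H = \HH_1(\Sigma_{g,n};\Z)$, which is free. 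We again argue by Lefschetz, now on the punctured surface $\Sigma_{g,n}$ (or on $S$ with the fixed punctures counted). If $f$ acted trivially on $\HH_1(\Sigma_{g,n};\Z)$, then $\Lambda(f|_{\Sigma_{g,n}}) = \chi(\Sigma_{g,n}) < 0$, since $\Sigma_{g,n}$ is hyperbolic. On the other hand, a finite-order holomorphic map has only isolated fixed points of positive index on the compactification. Since $S \setminus \{p_i\}$ deformation retracts to a compact subsurface invariant under an isometric finite-order map, the Lefschetz number of $f|_{\Sigma_{g,n}}$ equals the number of fixed points in $\Sigma_{g,n}$, which is $\geq 0$. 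This is a contradiction. For $g=0$, one can argue even more simply: $f$ fixes $n \geq 3$ points of $\bbP^1$, hence is trivial.
\end{itemize}

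\textbf{Serre's lemma.} Now $A \neq 1$, $A^d = 1$, and $A \equiv 1 \pmod{\ell}$ with $\ell \geq 3$. Passing to a prime $q \mid \ell$ (if $q = 2$ then $4 \mid \ell$), we may assume $\ell$ is an odd prime or $\ell = 4$. Write $A = 1 + \ell^k B$ with $k \geq 1$ and $B \not\equiv 0 \pmod{\ell}$. Expanding $(1+\ell^k B)^d = 1$ binomially, $d\ell^k B + \binom{d}{2}\ell^{2k}B^2 + \cdots = 0$. If $\ell \nmid d$, reducing modulo $\ell^{k+1}$ gives $\ell \mid B$, a contradiction. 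If $d = \ell = p$ is prime, the higher-order terms are divisible by $p^{2k+1}$, which forces $p \mid B$ unless $p = 2$ with $k = 1$; this case is excluded by taking $\ell = 4$. I expect this case bookkeeping in Serre's lemma to be the fiddly step, together with justifying the Lefschetz count on the punctured surface for $g \leq 1$. In the $g \leq 1$ case I would first try to avoid that count by choosing the lattice carefully.
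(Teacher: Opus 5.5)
Your outline is the standard argument for this fact. The paper gives no proof of its own and simply cites Ivanov. The argument runs: realize $f$ by an automorphism, use a Lefschetz count to show it acts nontrivially on an integral lattice $H$, and finish with Serre's congruence lemma. The geometric half is sound.

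The congruence step as written breaks down in one case, namely $\ell = 4$ and $d = 2$. There you write $A = 1 + 4^k B$ with $4 \nmid B$, and you land in your case $\ell \nmid d$. But reducing $2\cdot 4^k B + 4^{2k} B^2 = 0$ modulo $4^{k+1}$ only gives $2 \mid B$, not $4 \mid B$. Your other case, $d = \ell = p$, never applies because $\ell = 4$ is not prime.

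The fix is to normalize by the prime $p \mid \ell$ rather than by $\ell$. Write $A = 1 + p^k B$ with $p \nmid B$, where $k \geq 1$, and $k \geq 2$ if $p = 2$; this is the only place $4 \mid \ell$ is used.
\begin{itemize}
\item If $d \neq p$, reducing modulo $p^{k+1}$ gives $p \mid dB$, hence $p \mid B$.
\item If $d = p$, every term $\binom{p}{j} p^{jk} B^j$ with $j \geq 2$ is divisible by $p^{k+2}$. For $2 \leq j < p$ this holds since $p \mid \binom{p}{j}$ and $1 + jk \geq k + 2$. For $j = p$ it holds since $pk \geq k + 2$, which for $p = 2$ is exactly the condition $k \geq 2$. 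So reducing modulo $p^{k+2}$ again gives $p \mid B$.
\end{itemize}
Either way this contradicts $p \nmid B$.

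Separately, your genus split is unnecessary, and avoiding it removes the punctured-surface Lefschetz count you were worried about. For every $g \geq 1$, take $H = \HH_1(S;\Z)$; your surjectivity remark shows that $f$ acts trivially on $\HH_1(S;\Z/\ell)$ for any $g$. Then count fixed points on the closed surface, where the marked points are themselves fixed points. If $f$ acted trivially on $\HH_1(S;\Z)$, then $n \leq \#\mathrm{Fix}(f) = L(f) = 2 - 2g$, contradicting $2g - 2 + n > 0$.

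For $g = 1$ this reads $L(f) = 0$ against at least $n \geq 1$ fixed points, so $\HH_1(S;\Z) \cong \Z^2$ is not too small after all. For $g = 0$ the same inequality shows directly that $f = 1$.

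Your punctured-surface count is also correct, but it needs one more remark. You must check that $f$ has no fixed points on the boundary of the invariant compact core. This holds because $f$ rotates each cusp by the root of unity $f'(p_i) \neq 1$.
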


We now prove the following.  See \S \ref{section:normaldef} for
the definition of a normal variety.

\begin{lemma}
\label{lemma:modulinormal}
Fix $g,n \geq 0$ such that $\Sigma_{g,n}$ is hyperbolic.  Let $G < \PMod_{g,n}$
be a finite-index subgroup.  Then $\Moduli_{g,n}(G,\an)$ is an irreducible normal analytic variety.
In particular, $\Moduli_{g,n}$ is irreducible and normal.
\end{lemma}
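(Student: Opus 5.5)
The plan is to reduce to the torsion-free case and then apply Lemma \ref{lemma:normalquotient}. By Lemma \ref{lemma:leveltorsionfree}, $\PMod_{g,n}(3)$ is a torsion-free finite-index normal subgroup of $\PMod_{g,n}$. Set $H = G \cap \PMod_{g,n}(3)$. Then $H$ is torsion-free, has finite index in $\PMod_{g,n}$, and can be chosen normal in $\PMod_{g,n}$ (replace it by its normal core, which is still finite index and torsion-free). By Lemma \ref{lemma:coverproperties}, $\Moduli_{g,n}(H,\an) = \Teich_{g,n}/H$ is a smooth complex manifold. It is connected because $\Teich_{g,n}$ is connected. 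A connected complex manifold is irreducible, and it is normal by the generalized Riemann extension theorem.

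Next, the finite group $G/H$ acts on $\Teich_{g,n}/H$ by analytic automorphisms, since $H$ is normal in $G$ and $\PMod_{g,n}$ acts on $\Teich_{g,n}$ analytically. The quotient is
\[(\Teich_{g,n}/H)/(G/H) = \Teich_{g,n}/G = \Moduli_{g,n}(G,\an).\]
Lemma \ref{lemma:normalquotient} then says $\Moduli_{g,n}(G,\an)$ is normal. The one point needing care is that the analytic structure on $\Teich_{g,n}/G$ agrees with the quotient analytic structure of $(\Teich_{g,n}/H)/(G/H)$. Both are defined by taking $G$-invariant holomorphic functions locally, so this is a formality.

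Irreducibility is the step where I expect some friction, since irreducibility is not just a topological property. I would argue that $\Moduli_{g,n}(G,\an)$ is connected, being the continuous image of the connected space $\Teich_{g,n}$. By the definition of normality used in the paper (and Lemma \ref{lemma:normalquotient}), each connected component of a normal variety is irreducible, so $\Moduli_{g,n}(G,\an)$ is irreducible. Alternatively, it is the image of the irreducible space $\Teich_{g,n}/H$ under a finite surjective map, and the image of an irreducible space under such a map is irreducible.

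Finally, taking $G = \PMod_{g,n}$ shows that $\Moduli_{g,n}^{\an}$ is irreducible and normal. By the paper's definitions, an algebraic variety is normal exactly when its analytification is. For irreducibility we use fact (ii) of \S \ref{section:algebravsanalysis}. Hence $\Moduli_{g,n}$ is irreducible and normal.
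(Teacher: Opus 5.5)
Your proposal is correct and follows the paper's proof essentially verbatim. You pass to the torsion-free subgroup $G \cap \PMod_{g,n}(3)$, get normality from smoothness of the cover together with Lemma \ref{lemma:normalquotient}, and get irreducibility from connectedness plus normality. The one difference is that your normal-core step is unnecessary: $G \cap \PMod_{g,n}(3)$ is already normal in $G$ because $\PMod_{g,n}(3) \lhd \PMod_{g,n}$, and normality in $G$ is all the quotient argument needs.
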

\begin{proof}
Set $G' = G \cap \PMod_{g,n}(3)$, so by Lemma \ref{lemma:leveltorsionfree} the group $G'$ is a torsion-free finite-index
normal subgroup of $G$.  Lemma \ref{lemma:coverproperties} says that
$\Moduli_{g,n}(G',\an)$ is a smooth analytic variety.  Setting $Q = G/G'$, we have
$\Moduli_{g,n}(G,\an) = \Moduli_{g,n}(G',\an)/Q$.  Since $\Moduli_{g,n}(G,\an)$ is the quotient
of a smooth analytic variety by a finite group, it follows that $\Moduli_{g,n}(G,\an)$ is
normal (see Lemma \ref{lemma:normalquotient}).  Since $\Moduli_{g,n}(G,\an)$ is a quotient
of the connected analytic variety $\Teich_{g,n}$, the normal variety $\Moduli_{g,n}(G,\an)$
is connected and hence irreducible.
\end{proof} 

\subsection{Algebraicity}

The following theorem first appeared in Mumford's paper \cite{MumfordAbelian}, at
least for $n=0$:

\begin{theorem}[{Mumford \cite{MumfordAbelian}}]
\label{theorem:modulicoveralg}
Fix $g,n \geq 0$ such that $\Sigma_{g,n}$ is hyperbolic.  Let $G < \PMod_{g,n}$
be a finite-index subgroup.  Then the finite map $\pi\colon \Moduli_{g,n}(G,\an) \rightarrow \Moduli_{g,n}^{\an}$
has a unique algebraization.
\end{theorem}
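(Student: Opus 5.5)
We will deduce this from the Artin--Serre version of the generalized Riemann existence theorem, Theorem \ref{theorem:mumfordriemann}, applied with $X = \Moduli_{g,n}$, $M = \Moduli_{g,n}(G,\an)$, and $f = \pi$. Lemma \ref{lemma:modulinormal} already gives most of the hypotheses: $X$ is irreducible, and $M$ is an irreducible normal analytic variety. The map $\pi$ is finite because $G$ has finite index in $\PMod_{g,n}$. Indeed, $\pi$ is proper with fibers of cardinality at most $[\PMod_{g,n}:G]$: a compact set in $\Moduli_{g,n}^{\an}$ is covered by finitely many images of compact sets in $\Teich_{g,n}$, and their preimages in $\Teich_{g,n}/G$ are finite unions of translates. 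What remains is to produce a Zariski open set $U \subset \Moduli_{g,n}$ such that $V = \pi^{-1}(U^{\an})$ is connected and dense in $M$, and such that $\pi\colon V \to U^{\an}$ is a finite unbranched cover.

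The choice of $U$ is $U = \Moduli_{g,n} \setminus \Moduli_{g,n}^{\sym}$. This is a nonempty Zariski open set: by Corollary \ref{corollary:symclosed} when $g+n \geq 3$, and by Lemma \ref{lemma:symclosedlowgenus} for $(g,n) \in \{(2,0),(1,1)\}$. For genus $0$ we have $\Moduli_{0,n}^{\sym}=\emptyset$, so $U$ is everything.

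\textbf{Unbranched cover.} Let $K$ be the kernel of the action of $\PMod_{g,n}$ on $\Teich_{g,n}$. This is trivial except in the two edge cases, where it is the order-$2$ group generated by the hyperelliptic involution. Over $U$, the stabilizer in $\PMod_{g,n}$ of every point of $\Teich_{g,n}$ is exactly $K$. Hence $\PMod_{g,n}/K$ acts freely on the preimage $\tU$ of $U^{\an}$ in $\Teich_{g,n}$, and $U^{\an}$ is the quotient of the complex manifold $\tU$ by this free, properly discontinuous action. Similarly, $V = \tU/(GK/K)$. For a point $u$ of $U^{\an}$, choose a small ball $B$ around a lift $\tilde u$ whose $\PMod_{g,n}/K$-translates are disjoint. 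Then $B \cong$ its image in $U^{\an}$, and the preimage of that image in $V$ is a disjoint union of copies of $B$ indexed by the cosets of $GK$ in $\PMod_{g,n}$. This gives the local product structure required of a finite unbranched cover, as an isomorphism of analytic spaces and not merely a homeomorphism, since everything is locally a biholomorphic copy of a ball in a manifold.

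\textbf{Connected and dense.} The set $\tU$ is the complement in $\Teich_{g,n}$ of the preimage of a proper analytic subvariety. That preimage is a closed analytic subset of positive complex codimension in the connected complex manifold $\Teich_{g,n}$, so its complement is connected and dense. Therefore its image $V$ is connected and dense in $M$.

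Theorem \ref{theorem:mumfordriemann} then yields existence and uniqueness of the algebraization. The main point needing care is the unbranched-cover verification. One must use the right definition of $\Moduli^{\sym}$ in the edge cases, accounting for the ineffective kernel $K$, and check that the analytic structure on $\Moduli_{g,n}^{\an}$ near points of $U$ is genuinely the quotient manifold structure. The rest is bookkeeping.
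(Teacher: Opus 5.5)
Your proposal is correct and follows the paper's proof. You apply Theorem \ref{theorem:mumfordriemann} with $U = \Moduli_{g,n} \setminus \Moduli_{g,n}^{\sym}$, check that the action on $\tU$ is free (modulo the hyperelliptic involution when $(g,n) \in \{(2,0),(1,1)\}$), and show $V$ is connected and dense exactly as the paper does. The only difference is organizational: you fold the genus-$0$ case (where $U$ is everything and the argument reduces to Theorem \ref{theorem:generalizedriemann}) and the two edge cases into one argument via the ineffective kernel $K$, while the paper treats them as three separate claims.
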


The paper \cite{MumfordAbelian} says that Theorem \ref{theorem:modulicoveralg} follows
from its version of the generalized Riemann existence theorem, which we discussed
earlier (see Theorem \ref{theorem:mumfordriemann}).  It does not indicate how
to verify the hypotheses of that theorem, so for completeness we include the proof.

\begin{proof}[Proof of Theorem \ref{theorem:modulicoveralg}]
As a warm-up, we prove it in genus $0$:

\begin{claim}{1}
\label{claim:modulicover.1}
Fix $n \geq 3$.  Let $G < \PMod_{0,n}$ be a finite-index subgroup.  
The finite map $\pi\colon \Moduli_{0,n}(G,\an) \rightarrow \Moduli_{0,n}^{\an}$
has a unique algebraization.
\end{claim}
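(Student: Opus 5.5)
We will apply the Artin--Serre version of the generalized Riemann existence theorem (Theorem \ref{theorem:mumfordriemann}) with $X = \Moduli_{0,n}$, $M = \Moduli_{0,n}(G,\an)$, and $f = \pi$. By Example \ref{example:moduli0n}, $\Moduli_{0,n}$ is a smooth affine variety, in particular irreducible. By Lemma \ref{lemma:modulinormal}, $M$ is an irreducible normal analytic variety, and the projection $\pi$ is a finite map. It remains to produce a Zariski open set $U \subset \Moduli_{0,n}$ such that $V = \pi^{-1}(U^{\an})$ is connected and dense in $M$ and $\pi\colon V \rightarrow U^{\an}$ is a finite unbranched cover.

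In genus $0$ we simply take $U = \Moduli_{0,n}$. The group $\PMod_{0,n}$ acts freely on $\Teich_{0,n}$: an element with a fixed point has finite order and is realized as an automorphism of $\bbP^1$ fixing $n \geq 3$ points, so it is trivial (this is the observation that $\Moduli_{0,n}^{\sym} = \emptyset$). Since the action is also properly discontinuous, $\Teich_{0,n} \rightarrow \Moduli_{0,n}^{\an}$ is an unbranched analytic covering. Hence the intermediate map $\Teich_{0,n}/G \rightarrow \Teich_{0,n}/\PMod_{0,n}$ is a finite-sheeted unbranched cover in the strong analytic sense required: locally over a small ball $B$ evenly covered by $\Teich_{0,n}$, the preimage in $\Teich_{0,n}/G$ is a disjoint union of $[\PMod_{0,n}:G]$ copies of $B$, each mapping isomorphically. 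Then $V = M$ is trivially connected and dense.

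The only point needing care, and the main though mild obstacle, is to check that freeness gives an unbranched cover in the analytic-space sense of \S\ref{section:normal}, and not merely a topological covering. This holds because both spaces are complex manifolds and the local sheets are biholomorphic to $B$. Since $M$ is smooth here, we could even invoke Theorem \ref{theorem:generalizedriemann} directly instead of Theorem \ref{theorem:mumfordriemann}, which yields existence and uniqueness of the algebraization at once. I would present it this way, noting that in higher genus the same strategy must instead use $U = \Moduli_{g,n} \setminus \Moduli_{g,n}^{\sym}$, via Corollary \ref{corollary:symclosed} and Lemma \ref{lemma:symclosedlowgenus}.
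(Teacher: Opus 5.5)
Your proposal is correct and matches the paper's proof: the $\PMod_{0,n}$-action on $\Teich_{0,n}$ is free, so $\pi$ is a finite unbranched cover, and Theorem~\ref{theorem:generalizedriemann} then gives existence and uniqueness of the algebraization directly, which is the route you say you would present. The only difference is how you justify freeness: you use automorphisms of $\bbP^1$ fixing $n \geq 3$ points, whereas the paper cites the torsion-freeness of $\PMod_{0,n}$; the detour through Theorem~\ref{theorem:mumfordriemann} is unnecessary.
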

\begin{proof}[Proof of claim]
The key point here is that $\PMod_{0,n}$ is torsion-free, so it acts
freely on Teichm\"{u}ller space.  It follows that the map $\Teich_{0,n} \rightarrow \Moduli_{0,n}^{\an}$
is an unbranched cover, and thus that the map $\pi\colon \Moduli_{0,n}(G,\an) \rightarrow \Moduli_{0,n}^{\an}$
is a finite unbranched cover.  The usual generalized Riemann existence theorem 
(Theorem \ref{theorem:generalizedriemann})
therefore implies that the map $\Moduli_{0,n}(G,\an) \rightarrow \Moduli_{0,n}^{\an}$ has a unique algebraization, as desired.
\end{proof}

For $g \geq 1$, we first handle the cases where $g+n \geq 3$ and then adjust the proof
to deal with the remaining cases $(g,n) = (2,0)$ and $(g,n) = (1,1)$.

\begin{claim}{2}
\label{claim:modulicover.2}
Fix $g \geq 1$ and $n \geq 0$ such that $g+n \geq 3$.
Let $G < \PMod_{g,n}$ be a finite-index subgroup.  
The finite map $\pi\colon \Moduli_{g,n}(G,\an) \rightarrow \Moduli_{g,n}^{\an}$
has a unique algebraization.
\end{claim}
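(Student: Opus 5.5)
The plan is to apply the Artin--Serre version of the generalized Riemann existence theorem (Theorem \ref{theorem:mumfordriemann}) with $X = \Moduli_{g,n}$, $M = \Moduli_{g,n}(G,\an)$, $f = \pi$, and Zariski open set $U = \Moduli_{g,n} \setminus \Moduli_{g,n}^{\sym}$. Corollary \ref{corollary:symclosed} says that $\Moduli_{g,n}^{\sym}$ is a proper Zariski closed subvariety, so $U$ is a nonempty Zariski open set. Lemma \ref{lemma:modulinormal} says that $\Moduli_{g,n}$ is irreducible and that $M$ is an irreducible normal analytic variety. It remains to check the hypotheses on $V = \pi^{-1}(U^{\an})$.

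First we show that $\pi\colon V \rightarrow U^{\an}$ is a finite unbranched cover. Let $\tU \subset \Teich_{g,n}$ be the preimage of $U^{\an}$. A point $x \in \Teich_{g,n}$ has stabilizer $(\PMod_{g,n})_x \cong \Aut(\cS)$, where $\cS$ is its image in $\Moduli_{g,n}$. By the definition of $\Moduli_{g,n}^{\sym}$ (here $g+n \geq 3$), $\PMod_{g,n}$ therefore acts freely on $\tU$. The action is properly discontinuous, so $\tU \rightarrow U^{\an}$ is an analytic covering map: each point has a neighborhood $B$ whose $\PMod_{g,n}$-translates are disjoint and biholomorphic to $B$. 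Then $\tU/G \rightarrow \tU/\PMod_{g,n}$ is locally isomorphic to $B \times (G \backslash \PMod_{g,n}) \rightarrow B$, with finite fiber of size $[\PMod_{g,n}:G]$. This is exactly the notion of finite unbranched cover from the paper. One point needs care: the trivialization has to be an isomorphism of analytic spaces and not just a homeomorphism. This holds because $U^{\an}$ is locally biholomorphic to open subsets of $\Teich_{g,n}$ via the quotient map, since the action is free there.

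Next we show that $V$ is connected and dense. $\Moduli_{g,n}^{\sym}$ is a proper closed analytic subvariety of the irreducible space $\Moduli_{g,n}$. Its preimage $\tZ$ in $\Teich_{g,n}$ is therefore a proper closed analytic subvariety of the connected complex manifold $\Teich_{g,n}$; it is the union of the fixed loci of the nontrivial finite-order elements, which is locally finite by the argument of Claim \ref{claim:hyperclosed.1}. The complement of a proper closed analytic subvariety of a connected complex manifold is connected and dense, because it has real codimension at least $2$. So $\tU$ is connected and dense in $\Teich_{g,n}$. Its image $V = \tU/G$ is then connected and dense in $M$, since the quotient map is continuous, open and surjective.

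With all hypotheses verified, Theorem \ref{theorem:mumfordriemann} gives a unique algebraization. I expect the main obstacle to be the analytic-isomorphism subtlety in the unbranched-cover verification, namely identifying the analytic structure of $\Moduli_{g,n}^{\an}$ near points with trivial automorphism group with that of $\Teich_{g,n}$. The density and connectivity step should be routine.
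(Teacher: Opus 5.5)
Your proposal is correct and follows essentially the same route as the paper. You apply Theorem \ref{theorem:mumfordriemann} with $U = \Moduli_{g,n} \setminus \Moduli_{g,n}^{\sym}$ (nonempty by Corollary \ref{corollary:symclosed}), take irreducibility and normality from Lemma \ref{lemma:modulinormal}, and show that $\PMod_{g,n}$ acts freely on $\tU$, so that $V = \tU/G \rightarrow U^{\an}$ is a finite unbranched cover. Connectedness and density of $V$ then follow because $\tU$ is the complement of a proper analytic subvariety of $\Teich_{g,n}$; your extra remarks on why the trivialization is an analytic isomorphism simply fill in details the paper leaves implicit.
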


We will use Mumford's version of the generalized 
Riemann existence theorem (Theorem \ref{theorem:mumfordriemann}).
This requires verifying its hypotheses, which in our setting are as follows:
\begin{itemize}
\item[(a)] Both $\Moduli_{g,n}(G,\an)$ and $\Moduli_{g,n}$ are irreducible, which follows from Lemma \ref{lemma:modulinormal}.
\item[(b)] The analytic variety $\Moduli_{g,n}(G,\an)$ is normal, which also follows from Lemma \ref{lemma:modulinormal}.
\item[(c)] There exists a Zariski open set $U \subset \Moduli_{g,n}$ such that
$V = \pi^{-1}(U^{\an})$ is connected and dense in $\Moduli_{g,n}(G,\an)$ and
$\pi\colon V \rightarrow U^{\an}$ is a finite unbranched cover.  We will verify this
below.
\end{itemize}
It remains to construct a Zariski open set $U \subset \Moduli_{g,n}$ as in (c).  

Set $Z = \Moduli_{g,n}^{\sym}$.  Corollary \ref{corollary:symclosed} says that $Z$
is a proper Zariski closed subset of $\Moduli_{g,n}$.  Setting $U = \Moduli_{g,n} \setminus Z$,
it follows that $U$ is a nonempty Zariski open set. 
Before studying $V = \pi^{-1}(U^{\an})$, we make some preliminary observations.
Let $\tZ$ and $\tU$ be the preimages in the Teichm\"{u}ller space
$\Teich_{g,n}$ of $Z^{\an}$ and $U^{\an}$, respectively.
Since $\tZ$ is the preimage of the analytification of
the proper Zariski closed subset $Z \subset \Moduli_{g,n}$, it follows
that $\tZ$ is a proper analytic subspace of $\Teich_{g,n}$.  Since
$\tU = \Teich_{g,n} \setminus \tZ$, we deduce that $\tU$ is connected\footnote{The point here is that proper analytic subspaces of complex manifolds have real codimension at least $2$,
so they cannot disconnect the manifold.}
and dense.

We claim that $\PMod_{g,n}$ acts freely on $\tU$.
Indeed, the only elements of $\PMod_{g,n}$ that fix points of $\Teich_{g,n}$ are the finite-order ones.
If $f \in \PMod_{g,n}$ is a nontrivial finite-order element fixing $x \in \Teich_{g,n}$, then the
image of $x$ in $\Moduli_{g,n}$ is a point $\cS$ whose automorphism group contains an element
corresponding to $f$.  Since $Z = \Moduli_{g,n}^{\sym}$, 
it follows that $\cS \in Z^{\an}$, so $x \in \tZ = \Teich_{g,n} \setminus \tU$.
The claim follows.

Since $\PMod_{g,n}$ acts freely and properly discontinuously on $\tU$ and 
$U^{\an} = \tU/\PMod_{g,n}$, the map
$\tU \rightarrow U^{\an}$ is an unbranched holomorphic covering space.
Set $V = \pi^{-1}(U^{\an})$.  Since $V \subset \Moduli_{g,n}(G,an)$ is the image of the connected
dense subset $\tU \subset \Teich_{g,n}$, it follows that $V$ is connected and dense.  
We have $V \cong \tU/G$, and the covering space $\tU \rightarrow U^{\an}$ factors as
\[\begin{tikzcd}
\tU \arrow[rr, bend right=50, "/\PMod_{g,n}"{yshift=2pt}] \arrow{r}{/G} & V \arrow{r}{\pi} & U^{\an}.
\end{tikzcd}\]
Since $G$ is a finite-index subgroup of $\PMod_{g,n}$, we deduce that
$\pi\colon V \rightarrow U^{\an}$ is a finite unbranched cover.  The
hypotheses of Theorem \ref{theorem:mumfordriemann} are therefore satisfied.

\begin{claim}{3}
\label{claim:modulicover.3}
Let $(g,n) \in \{(2,0),(1,1)\}$.
Let $G < \PMod_{g,n}$ be a finite-index subgroup.
The finite map $\pi\colon \Moduli_{g,n}(G,\an) \rightarrow \Moduli_{g,n}^{\an}$
has a unique algebraization.
\end{claim}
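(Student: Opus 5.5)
We adapt the proof of Claim \ref{claim:modulicover.2}, again invoking Mumford's version of the generalized Riemann existence theorem (Theorem \ref{theorem:mumfordriemann}). Hypotheses (a) and (b) hold verbatim by Lemma \ref{lemma:modulinormal}. The only change is in constructing the Zariski open set for (c). We take $Z = \Moduli_{g,n}^{\sym}$, which by Lemma \ref{lemma:symclosedlowgenus} is a proper Zariski closed subset, and set $U = \Moduli_{g,n} \setminus Z$. As before, the preimage $\tZ \subset \Teich_{g,n}$ is a proper analytic subspace, so $\tU = \Teich_{g,n} \setminus \tZ$ is connected and dense. Hence $V = \pi^{-1}(U^{\an}) \cong \tU/G$ is connected and dense in $\Moduli_{g,n}(G,\an)$.

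The new point is that $\PMod_{g,n}$ no longer acts freely on $\tU$. Instead, every point of $\Teich_{g,n}$ is fixed by a hyperelliptic involution. We claim that there is a single central element $\iota \in \PMod_{g,n}$ of order $2$ such that the stabilizer of every $x \in \tU$ is exactly $\langle \iota \rangle$.
\begin{itemize}
\item For $(g,n)=(2,0)$, the hyperelliptic involution is central in $\Mod_2$.
\item For $(g,n)=(1,1)$, we take $\iota = -I$ under $\PMod_{1,1} \cong \mathrm{SL}_2(\Z)$.
\end{itemize}
Since each $x \in \Teich_{g,n}$ is fixed by the hyperelliptic involution of the corresponding curve, and this mapping class is $\iota$ by centrality and uniqueness, the stabilizer of $x$ contains $\iota$. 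For $x \in \tU$, the stabilizer is isomorphic to $\Aut(\cS)$, which has order at most $2$ by the definition of $\Moduli_{g,n}^{\sym}$. So the stabilizer is exactly $\langle \iota \rangle$. Moreover, $\iota$ acts trivially on all of $\Teich_{g,n}$.

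With this in hand, $\PMod_{g,n}/\langle\iota\rangle$ acts freely and properly discontinuously on $\tU$. Thus $\tU \rightarrow U^{\an}$ is an unbranched covering with deck group $\PMod_{g,n}/\langle \iota \rangle$. Likewise, $V = \tU/G = \tU/(G\langle\iota\rangle)$, because $\iota$ acts trivially. The map $\pi\colon V \rightarrow U^{\an}$ is therefore the intermediate cover corresponding to the finite-index subgroup $G\langle\iota\rangle/\langle\iota\rangle$, and so it is a finite unbranched cover. Theorem \ref{theorem:mumfordriemann} then applies.

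The main obstacle is justifying that the stabilizer of each point of $\tU$ is exactly the central element $\iota$ acting trivially on $\Teich_{g,n}$, uniformly across $\tU$. I would prove this by citing that every genus-$2$ curve is hyperelliptic (Example \ref{example:moduli2}) and every elliptic curve has inversion (Example \ref{example:moduli11}). Combined with the uniqueness of these involutions, this forces them all to be the same mapping class.

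One further point needs care: the local structure. We need $\pi$ to be an unbranched cover in the analytic-space sense, not merely a topological covering. This follows because $\tU/\langle\iota\rangle = \tU$ as analytic spaces, since $\iota$ acts trivially.
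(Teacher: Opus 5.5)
Your proposal is correct and is essentially the paper's argument. The paper likewise observes that the center $\langle\iota\rangle$ acts trivially on $\Teich_{g,n}$, replaces $\PMod_{g,n}$ by $\PMod_{g,n}/Z(\PMod_{g,n})$ and $G$ by its image $\oG$ (your $G\langle\iota\rangle/\langle\iota\rangle$), and then reruns Claim \ref{claim:modulicover.2} with Lemma \ref{lemma:symclosedlowgenus} in place of Corollary \ref{corollary:symclosed}. You also spell out two points the paper leaves implicit: that stabilizers of points of $\tU$ are exactly $\langle\iota\rangle$, and that the resulting cover is unbranched in the analytic sense.
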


In this case, the group $\PMod_{g,n}$ has a nontrivial
center $Z(\PMod_{g,n})$ which acts trivially on $\Teich_{g,n}$.  This center
is the cyclic group generated by the hyperelliptic involution.  
The action of $\PMod_{g,n}$ on $\Teich_{g,n}$ factors through
$\Gamma = \PMod_{g,n} / Z(\PMod_{g,n})$.  Letting $\oG < \Gamma$ be the image
of $G < \PMod_{g,n}$ under the projection $\PMod_{g,n} \rightarrow \Gamma$,
we have $\Moduli_{g,n}(G,\an) = \Teich_{g,n}/\oG$.  

The automorphism group of each $\cS \in \Moduli_{g,n}$ contains the hyperelliptic involution.
A point $x \in \Teich_{g,n}$ is fixed by a nontrivial finite-order element $f \in \Gamma$
exactly when the image $\cS \in \Moduli_{g,n}$ of $x$ has an automorphism not contained
in the cyclic group generated by the hyperelliptic involution.
Using the $\Gamma$-action instead of the $\PMod_{g,n}$-action, the proof
proceeds exactly like in Claim \ref{claim:modulicover.2}, with
Lemma \ref{lemma:symclosedlowgenus} substituting for Corollary \ref{corollary:symclosed}.
\end{proof}

\subsection{Normality of algebraization}

Let $G < \PMod_{g,n}$ be a finite-index subgroup.  Theorem \ref{theorem:modulicoveralg}
says that the finite map $\pi\colon \Moduli_{g,n}(G,\an) \rightarrow \Moduli_{g,n}^{\an}$
has a unique algebraization.  We will denote the resulting algebraization of
$\Moduli_{g,n}(G,\an)$ by $\Moduli_{g,n}(G)$.  Just like for $\Moduli_{g,n}$, to
simplify our notation we will not distinguish between $\Moduli_{g,n}(G)$ and
$\Moduli_{g,n}(G,\an)$ unless it is needed for clarity.  We record the following
consequence of our previous results:

\begin{lemma}
\label{lemma:modgsmoothnormal}
Fix $g,n \geq 0$ such that $\Sigma_{g,n}$ is hyperbolic.  Let $G < \PMod_{g,n}$
be a finite-index subgroup.  Then $\Moduli_{g,n}(G)$ is a normal variety, and
is smooth if $G$ is torsion-free.
\end{lemma}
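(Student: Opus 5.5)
The plan is to reduce both assertions to the analytic side, where they have already been established. By definition, $\Moduli_{g,n}(G)$ is the algebraic variety produced by Theorem \ref{theorem:modulicoveralg}, so its analytification $\Moduli_{g,n}(G)^{\an}$ is isomorphic, as an analytic variety, to $\Moduli_{g,n}(G,\an) = \Teich_{g,n}/G$. Both normality and smoothness of an algebraic variety can be checked on the analytification. Normality holds by the definition we adopted in \S\ref{section:normaldef}, and smoothness by property (iii) in \S\ref{section:algebravsanalysis}. So it is enough to prove the corresponding statements for $\Moduli_{g,n}(G,\an)$.

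For normality, I would invoke Lemma \ref{lemma:modulinormal}, which states that $\Moduli_{g,n}(G,\an)$ is an irreducible normal analytic variety for every finite-index $G < \PMod_{g,n}$. It then follows from the definition that $\Moduli_{g,n}(G)$ is normal. This step needs no further work.

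For smoothness, assume $G$ is torsion-free. Lemma \ref{lemma:coverproperties}(i) says that $\Moduli_{g,n}(G,\an)$ is a smooth complex manifold, so $\Moduli_{g,n}(G)^{\an}$ is smooth. Property (iii) of \S\ref{section:algebravsanalysis} then shows that $\Moduli_{g,n}(G)$ is smooth.

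I do not expect a real obstacle; the argument is bookkeeping. The one point to state carefully is that the isomorphism $\Moduli_{g,n}(G)^{\an} \cong \Moduli_{g,n}(G,\an)$ supplied by the algebraization is an isomorphism of analytic varieties, not merely a homeomorphism. Normality and smoothness are properties of the analytic structure, so a homeomorphism alone would not transfer them. This is guaranteed by how an algebraization was defined.
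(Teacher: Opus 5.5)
Your proposal is correct and matches the paper's proof. The paper also transfers normality and smoothness from the analytification $\Moduli_{g,n}(G,\an)$, using Lemma \ref{lemma:modulinormal} for normality and Lemma \ref{lemma:coverproperties}(i) for smoothness, and you rightly point out that the algebraization gives an isomorphism of analytic varieties, not merely a homeomorphism.
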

\begin{proof}
Since an algebraic variety is normal if its analytification is normal, this
is immediate from Lemmas \ref{lemma:modulinormal} and \ref{lemma:coverproperties}.
\end{proof}

\section{The Deligne--Mumford compactification}
\label{section:delignemumford}

Fix $g,n \geq 0$ such that $\Sigma_{g,n}$ is hyperbolic.  At least for $n=0$, Baily \cite{BailyQuasi} proved
that $\Moduli_{g,n}$ is a quasiprojective variety.  The points Baily added to $\Moduli_{g,n}$ to form
his projective compactification did not have a clear geometric meaning.  Later, Deligne and Mumford
constructed a geometrically meaningful compactification of $\Moduli_{g,n}$ called the Deligne--Mumford
compactification.  This section reviews its properties.

\subsection{References}

The most natural approach to the Deligne--Mumford compactification
for someone interested in mapping class groups
uses augmented Teichm\"{u}ller space.  This
has a large literature that we will not try to summarize.
We use Hubbard--Koch \cite{HubbardKoch}
as our primary reference, which gives a complete
construction of the Deligne--Mumford compactification using
Teichm\"{u}ller theory.  Hinich--Vaintrob \cite{HinichVaintrob} is
a good secondary reference, but it assumes that the Deligne--Mumford
compactification is already known to exist.

\subsection{Stable topological surfaces}
\label{section:stablesurface}

Except in the trivial case $(g,n) = (0,3)$ where it is a single point, 
the moduli space $\Moduli_{g,n}$ is not compact, so it is not a projective 
variety.  Deligne and Mumford compactified $\Moduli_{g,n}$ by adding points
corresponding to certain singular algebraic curves.  Before describing this,
we introduce the topological models of these singular curves.  

Recall that a simple closed curve on $\Sigma_{g,n}$ is {\em essential} if
it does not bound a disk or a once-punctured disk.  Let
$\sigma = \{\gamma_1,\ldots,\gamma_k\}$ be a (possibly empty) collection of 
disjoint essential simple closed curves that are pairwise non-isotopic.
If $k \geq 1$, then the isotopy classes of the $\gamma_i$ form a $(k-1)$-simplex in
the curve complex $\cC_{g,n}$. 

Define $\Sigma_{g,n}/\sigma$
to be the space obtained by collapsing each $\gamma_i$ to a node $x_i$.  See here:
\Figure{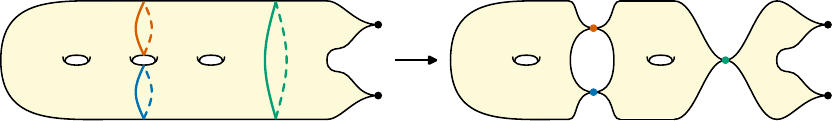}
Spaces of this form are called {\em stable topological surfaces}.  Define
\[\Sigma_{g,n}[\sigma] = \Sigma_{g,n} \setminus \bigcup_{i=1}^k \gamma_i \cong \Sigma_{g,n}/\sigma \setminus \{x_1,\ldots,x_k\}.\]
This is a disjoint union of punctured hyperbolic surfaces.  

Let $\PMod[\sigma]$ be the product of the pure\footnote{Here ``pure'' means
that they fix each puncture, including the two punctures coming from each $\gamma \in \sigma$.}
mapping class groups of the components of $\Sigma_{g,n}[\sigma]$.
Elements of this product extend over the $x_i$ to mapping classes on the
stable surface $\Sigma_{g,n}/\sigma$.
Let $\PMod\Span{\sigma}$ be the subgroup of $\PMod_{g,n}$ consisting of mapping classes
that take each circle $\gamma \in \sigma$ to itself by an orientation-preserving homeomorphism.
Note that this makes sense even though we have not chosen an orientation on $\gamma$.
There is a surjective map $\PMod\Span{\sigma} \rightarrow \PMod[\sigma]$ whose
kernel is the free abelian group generated by the Dehn twists about the curves in $\sigma$:
\[1 \rightarrow \SpanSet{$T_{\gamma}$}{$\gamma \in \sigma$} \rightarrow \PMod\Span{\sigma} \rightarrow \PMod[\sigma] \rightarrow 1.\]
Compare \cite[Proposition 2.5]{HubbardKoch}.

\subsection{Compactification}
\label{section:dm}

The Deligne--Mumford compactification $\oModuli_{g,n}$ is a projective variety containing
$\Moduli_{g,n}$ as a dense Zariski open subspace \cite{DeligneMumford, Knudsen2, Knudsen3}.
Since $\Moduli_{g,n}$ is irreducible (see Lemma \ref{lemma:modulinormal}) and
dense in $\oModuli_{g,n}$, it follows that $\oModuli_{g,n}$ is irreducible.
Points of $\oModuli_{g,n}$ parameterize isomorphism classes of tuples $\cS = (S;p_1,\ldots,p_n)$ where:
\begin{itemize}
\item $S$ is a connected projective algebraic curve of arithmetic genus $g$ whose only
singularities are nodes.  Here the arithmetic genus of a nodal curve is the genus of the smooth curve obtained
by smoothing each node.
\item The $p_i$ are distinct points in the smooth locus of $S$.
\item $\Aut(\cS)$ is a finite group.  Equivalently, the following holds for each component $T$ of the normalization\footnote{Topologically, this
normalization separates the two branches at each node of $S$ to produce a possibly disconnected smooth topological surface.} of $S$ such that
$T \cong \bbP^1$.  Let $Q \subset T$ be the set of points that map to either points of $\{p_1,\ldots,p_n\} \subset S$
or to nodes of $S$.  We remark that two points of $Q$ might map to a single node in $S$.  Then $|Q| \geq 3$.
\end{itemize}
As topological spaces
with marked points, these stable curves are exactly those that are 
obtained from a stable topological surface $\Sigma_{g,n}/\sigma$ for
some (possibly empty) $\sigma$ by replacing each of the $n$ original punctures
with a marked point.  The fact that $\Aut(\cS)$ is finite ensures that
the curves in $\sigma$ are essential and pairwise non-isotopic.

\subsection{Clutching maps}
\label{section:clutching1}

A reference for the material in this subsection is \cite[\S 3]{Knudsen2}.
Let $\sigma$ be either $\emptyset$ or a simplex of $\cC_{g,n}$.  Define $\Moduli[\sigma]$ to
be the product of the moduli spaces of curves associated to each component
of $\Sigma_{g,n}[\sigma]$.  There is an algebraic map $\Moduli[\sigma] \rightarrow \oModuli_{g,n}$
called a {\em clutching map} that glues the two marked points corresponding to each $\gamma \in \sigma$ to form a node.
The image of the clutching map consists of stable curves whose underlying stable
topological surface is $\Sigma_{g,n}/\sigma$:
\Figure{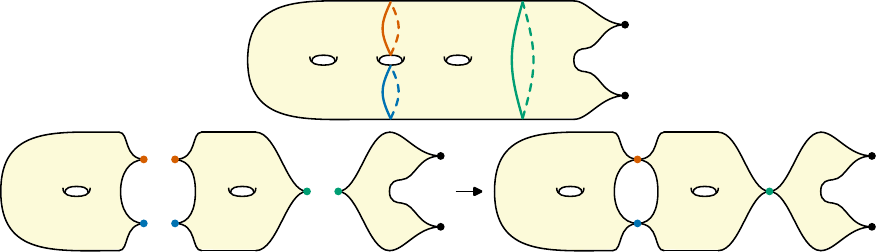}
The top of this figure shows a simplex $\sigma$, and the bottom row shows the components
of $\Sigma_{g,n}[\sigma]$ being glued together to form the stable surface $\Sigma_{g,n}/\sigma$.

Let $\oModuli[\sigma]$ be the product of the Deligne--Mumford compactifications
of the moduli spaces of curves associated to each component of $\Sigma_{g,n}[\sigma]$.
The clutching map $\Moduli[\sigma] \rightarrow \oModuli_{g,n}$ extends
to a clutching map $\oModuli[\sigma] \rightarrow \oModuli_{g,n}$.  The clutching
map $\oModuli[\sigma] \rightarrow \oModuli_{g,n}$ is finite.

Let $\Delta[\sigma] \subset \oModuli_{g,n}$
be the image of the clutching map $\oModuli[\sigma] \rightarrow \oModuli_{g,n}$.  The
subspace $\Delta[\sigma]$ is the closure of the image of $\Moduli[\sigma]$.  In this closure,
curves with additional nodes may appear.
The variety $\oModuli[\sigma]$ is a product of irreducible projective varieties,
so $\oModuli[\sigma]$ is also irreducible and projective.  Since the clutching map
$\oModuli[\sigma] \rightarrow \oModuli_{g,n}$ is a finite map, it follows
that its image $\Delta[\sigma]$ is an irreducible projective subvariety of $\oModuli_{g,n}$ (see \S \ref{section:finitemaps}).

\begin{remark}
For a simplex $\sigma$ of $\cC_{g,n}$, the clutching maps 
$\Moduli[\sigma] \rightarrow \oModuli_{g,n}$ and $\oModuli[\sigma] \rightarrow \oModuli_{g,n}$ 
are typically not injective.
\end{remark}

\begin{remark}
\label{remark:pmodclutch}
For $f \in \PMod_{g,n}$, we have
$\Delta[\sigma] = \Delta[f(\sigma)]$.  In other words, the images of
the clutching maps are really parameterized by $\PMod_{g,n}$-orbits of simplices
of $\cC_{g,n}$.
\end{remark}

\subsection{Boundary of the compactification}

Let $\partial \oModuli_{g,n} = \oModuli_{g,n} \setminus \Moduli_{g,n}$.  By construction,
each point of $\partial \oModuli_{g,n}$ lies in $\Delta[\sigma]$
for some simplex $\sigma$ of $\cC_{g,n}$.  Moreover, if $\sigma$ is a face of $\sigma'$ then
the clutching map $\oModuli[\sigma'] \rightarrow \oModuli_{g,n}$ factors
as
\[\oModuli[\sigma'] \rightarrow \oModuli[\sigma] \rightarrow \oModuli_{g,n},\]
where the second map is a clutching map and the first map is a partial clutching map\footnote{To help understand
this, note that since $\sigma \subset \sigma'$ we can form $\Sigma_{g,n}[\sigma']$ by
first deleting the curves in $\sigma$ and then deleting the curves in $\sigma' \setminus \sigma$.  Each
component of $\Sigma_{g,n}[\sigma]$ therefore
breaks into one or more components of $\Sigma_{g,n}[\sigma']$.} 
that glues together the marked points corresponding to the curves in $\sigma' \setminus \sigma$. 
It follows that $\partial \oModuli_{g,n}$ is the union of the $\Delta[\sigma]$
as $\sigma$ ranges over all vertices of $\cC_{g,n}$, i.e., over essential simple closed
curves.

\subsection{Boundary divisors}
\label{section:boundarydivisors}

For an essential simple closed curve $\gamma$, let $\Delta[\gamma] = \Delta[\{\gamma\}]$.
As we noted in Remark \ref{remark:pmodclutch}, if $\gamma$ and $\gamma'$
are essential simple closed curves such that there exists some $f \in \PMod_{g,n}$
with $f(\gamma) = \gamma'$, then $\Delta[\gamma] = \Delta[f(\gamma)]$.
By the change of coordinates principle from \cite[\S 1.3]{FarbMargalitPrimer},
the $\PMod_{g,n}$-orbits of essential simple closed curves fall into the following
families.

First, for $g \geq 1$ we have the orbit of a nonseparating simple closed curve $\gamma$.  For
such a $\gamma$, we have $\oModuli[\gamma] \cong \oModuli_{g-1,n+2}$:
\Figure{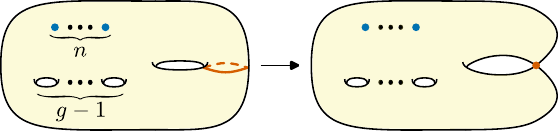}
The image $\Delta[\gamma]$ of the corresponding clutching map $\oModuli[\gamma] \rightarrow \oModuli_{g,n}$
is written $\Delta_{\irr}$.  It is an irreducible projective subvariety of
$\partial \oModuli_{g,n}$.

Second, for each $0 \leq i \leq g$ and each subset $P \subset \{1,\ldots,n\}$ such that
$\Sigma_{i,|P|+1}$ and $\Sigma_{g-i,n-|P|+1}$ are both hyperbolic surfaces, we have
the orbit of a simple closed separating curve that divides the surface into a genus-$i$ subsurface
containing the punctures $P$ and a genus-$(g-i)$ subsurface containing the punctures
$P^c=\{1,\ldots,n\} \setminus P$.  For such a $\gamma$, we have $\oModuli[\gamma] \cong \oModuli_{i,|P|+1} \times \oModuli_{g-i,n-|P|+1}$:
\Figure{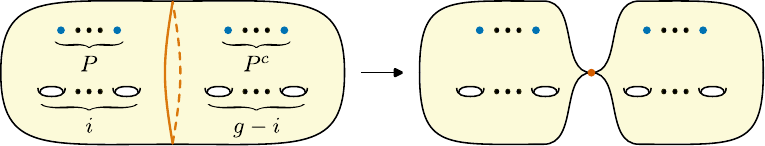}
The image $\Delta[\gamma]$ of the corresponding clutching map $\oModuli[\gamma] \rightarrow \oModuli_{g,n}$
is written $\Delta_{i,P}$.  It is an irreducible projective subvariety of
$\partial \oModuli_{g,n}$.  Since the two sides of $\gamma$ are unordered, we
have $\Delta_{i,P} = \Delta_{g-i,P^c}$.

For an irreducible variety $X$, an {\em irreducible divisor} in $X$ is a closed irreducible
subvariety $Y \subset X$ of codimension $1$.  Both $\Delta_{irr}$ and the $\Delta_{i,P}$ are
irreducible divisors in $\Moduli_{g,n}$.  Indeed, $\dim(\Moduli_{g,n}) = 3g-3+n$ and
\begin{align*}
\dim(\oModuli_{g-1,n+2}) &= 3(g-1)-3+(n+2) = (3g-3+n)-1,\\
\dim(\oModuli_{i,|P|+1} \times \oModuli_{g-i,n-|P|+1}) &= (3i-3+|P|+1) + (3(g-i)-3+n-|P|+1) \\
&= (3g-3+n)-1.
\end{align*}
We call the divisors $\Delta_{irr}$ and $\Delta_{i,P}$ the {\em boundary divisors} of
$\Moduli_{g,n}$.  

The boundary $\partial \oModuli_{g,n}$ is the union of $\Delta_{irr}$ (which only appears when $g \geq 1$) and the distinct $\Delta_{i,P}$.  Since
these are closed irreducible subvarieties of $\partial \oModuli_{g,n}$ and each contains a point
not in any of the others, they are in fact the irreducible components of $\partial \oModuli_{g,n}$.
It follows that $\partial \oModuli_{g,n}$ is a divisor in $\oModuli_{g,n}$, that is, a closed
subvariety each of whose irreducible components is a divisor.

\subsection{Analytic construction of the Deligne--Mumford compactification}
\label{section:stratainfinity}

Completing a program begun by Abikoff \cite{AbikoffAugmented} and continued
by many people, Hubbard--Koch \cite{HubbardKoch} showed how to construct $\oModuli_{g,n}$
analytically.  There is an augmented Teichm\"{u}ller space $\oTeich_{g,n}$ containing
$\Teich_{g,n}$ as a dense open subspace.  Just like for $\Teich_{g,n}$, the space
$\oTeich_{g,n}$ is contractible.\footnote{This result does not appear in \cite{HubbardKoch}.  An alternate
description of $\oTeich_{g,n}$ is that it is the metric completion of the Weil--Petersson metric on
Teichm\"{u}ller space (see \cite{Wolpert} for a survey).  For $n=0$, the paper \cite{Yamada} proves that this
metric completion is CAT(0), and hence in particular is contractible.  See \cite[Theorem 14]{Wolpert} for
the extension of this to general $(g,n)$.}  
The group $\PMod_{g,n}$ acts on $\oTeich_{g,n}$,
and as a topological space $\oModuli_{g,n}^{\an}$ is homeomorphic to $\oTeich_{g,n} / \PMod_{g,n}$.
Points of $\partial \oTeich_{g,n} = \oTeich_{g,n} \setminus \Teich_{g,n}$
correspond to isotopy classes of complete hyperbolic metrics of finite
volume on surfaces of the form $\Sigma_{g,n}[\sigma]$
for some simplex $\sigma$ of $\cC_{g,n}$.
   
One subtle issue here is that except in the trivial case $(g,n)=(0,3)$, the
space $\oTeich_{g,n}$ is {\em not} a complex manifold.  In
fact, it is not even locally compact.  However, \cite{HubbardKoch} shows that
there is a natural way to endow $\oTeich_{g,n} / \PMod_{g,n}$ with the structure
of a complex analytic space, and we have an isomorphism
$\oModuli_{g,n}^{\an} \cong \oTeich_{g,n} / \PMod_{g,n}$
of complex analytic spaces \cite[Theorem 13.1]{HubbardKoch}.
Just like for $\Moduli_{g,n}$, to simplify our notation
we will not distinguish between $\oModuli_{g,n}$ and $\oModuli_{g,n}^{\an}$
unless it is necessary for clarity.

\subsection{Stratification}

In \cite[\S 2.1]{HubbardKoch}, the space $\oTeich_{g,n}$ is given a stratification
by subspaces that are products of simpler Teichm\"{u}ller spaces.  Our next
goal is to describe this stratification.

Let $\sigma$ be a (possibly empty) collection of essential simple closed
curves on $\Sigma_{g,n}$ that are pairwise disjoint and non-isotopic.  If $\sigma \neq \emptyset$,
then $\sigma$ is a simplex of $\cC_{g,n}$.  Associated
to $\sigma$ is a locally closed subset $\Teich[\sigma]$ of $\oTeich_{g,n}$ called
a {\em stratum}.  We have $\Teich[\emptyset] = \Teich_{g,n}$, and more generally
$\Teich[\sigma]$ parameterizes isotopy classes of complete hyperbolic metrics
of finite volume on $\Sigma_{g,n}[\sigma]$.  In other words, $\Teich[\sigma]$
is homeomorphic to the product of the Teichm\"{u}ller spaces of the components
of $\Sigma_{g,n}[\sigma]$.

These strata are pairwise disjoint and partition $\oTeich_{g,n}$.  In other words,
\[\oTeich_{g,n} = \bigsqcup_{\sigma} \oTeich[\sigma],\]
where the disjoint union is over $\emptyset$ together with all the simplices
of $\cC_{g,n}$.  The action of $\PMod_{g,n}$ preserves this stratification in the sense that
\[f(\Teich[\sigma]) = \Teich[f(\sigma)] \quad \text{for all $f \in \PMod_{g,n}$}.\]
In particular, the subgroup $\PMod\Span{\sigma}$ discussed in
\S \ref{section:stablesurface} takes $\Teich[\sigma]$
to itself.  The Dehn twists about the curves in $\sigma$ act
trivially on $\Teich[\sigma]$, so the action of $\PMod\Span{\sigma}$ on
$\Teich[\sigma]$ factors through $\PMod[\sigma]$.  

Let $S_1,\ldots,S_d$ be the components of $\Sigma_{g,n}[\sigma]$.  For each $1 \leq i \leq d$,
let $\PMod(S_i)$ be the
pure mapping class group of $S_i$ and let
$\Teich(S_i)$ be the Teichm\"{u}ller space of $S_i$.  We have
\begin{align*}
\PMod[\sigma] &= \PMod(S_1) \times \cdots \times \PMod(S_d),\\
\Teich[\sigma]      &= \Teich(S_1) \times \cdots \times \Teich(S_d).
\end{align*}
The action of $\PMod[\sigma]$ on $\Teich[\sigma]$ is the product of the actions
of $\PMod(S_i)$ on $\Teich(S_i)$.

\begin{remark}
From the fact that the Dehn twists about the curves in $\sigma$ act
trivially on $\Teich[\sigma]$, we see that except in the trivial case $(g,n) = (0,3)$,
the action of $\PMod_{g,n}$ on
$\oTeich_{g,n}$ is not properly discontinuous.  This is related to the
fact that $\oTeich_{g,n}$ is not locally compact.  
\end{remark}

\subsection{Closed strata}

For $\sigma$ either $\emptyset$ or a simplex of $\cC_{g,n}$, let 
$\oTeich[\sigma]$ be the closure of the stratum $\Teich[\sigma]$.  We have
\[\oTeich[\sigma] = \bigsqcup_{\tau \supseteq \sigma} \Teich[\tau],\]
where the disjoint union is over all collections of disjoint curves $\tau$ containing $\sigma$.
The action of $\PMod\Span{\sigma}$ on
$\oTeich_{g,n}$ preserves $\oTeich[\sigma]$, and just like for $\Teich[\sigma]$ its
action on $\oTeich[\sigma]$ factors through $\PMod[\sigma]$.  The closed
stratum is homeomorphic to the product of the augmented Teichm\"{u}ller spaces
of the components of $\Sigma_{g,n}[\sigma]$.  This homeomorphism is compatible
with the clutching map $\oModuli[\sigma] \rightarrow \Moduli_{g,n}$ in the sense
that we have a commutative diagram
\[\begin{tikzcd}
\oTeich[\sigma] \arrow{d}{/\PMod[\sigma]} \arrow[hook]{r} & \oTeich_{g,n} \arrow{d}{/\PMod_{g,n}} \\
\oModuli[\sigma] \arrow{r} & \Moduli_{g,n},
\end{tikzcd}\]
where the bottom arrow is the clutching map.

\subsection{Intersections of codimension-one strata}

For an essential simple closed curve $\gamma$, let $\Teich[\gamma] = \Teich[\{\gamma\}]$ and
$\oTeich[\gamma] = \oTeich[\{\gamma\}]$.  It follows from the discussion above
that the images of the closed strata $\oTeich[\gamma]$ in $\oModuli_{g,n}$ are the
boundary divisors.  For later use, we record the following observation about them:

\begin{lemma}
\label{lemma:intersectcurves}
Fix $g,n \geq 0$ such that $\Sigma_{g,n}$ is hyperbolic.  For some $k \geq 1$, let $\gamma_1,\ldots,\gamma_k$ be
pairwise distinct essential simple closed curves on $\Sigma_{g,n}$.
Set $\sigma = \{\gamma_1,\ldots,\gamma_k\}$.  Then the following hold:
\begin{itemize}
\item If $\sigma$ is a simplex of $\cC_{g,n}$, then
$\oTeich[\gamma_1] \cap \cdots \cap \oTeich[\gamma_k]$ is $\oTeich[\sigma]$, which is contractible.
\item If $\sigma$ is not a simplex of $\cC_{g,n}$,
then $\oTeich[\gamma_1] \cap \cdots \cap \oTeich[\gamma_k] = \emptyset$.
\end{itemize}
\end{lemma}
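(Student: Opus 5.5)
The plan is to work directly from the description of the closed strata as unions of open strata, $\oTeich[\gamma] = \bigsqcup_{\tau \ni \gamma} \Teich[\tau]$, where $\tau$ ranges over collections of pairwise disjoint, non-isotopic essential curves containing $\gamma$. Since the open strata $\Teich[\tau]$ partition $\oTeich_{g,n}$, intersecting these unions gives
\[\oTeich[\gamma_1] \cap \cdots \cap \oTeich[\gamma_k] = \bigsqcup_{\tau \supseteq \{\gamma_1,\ldots,\gamma_k\}} \Teich[\tau].\]
A stratum $\Teich[\tau]$ lies in the intersection exactly when $\tau$ contains every $\gamma_i$.

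For the second bullet, I would argue as follows. If $\sigma$ is not a simplex, then the $\gamma_i$ cannot be realized disjointly, since they are pairwise distinct (that is, non-isotopic). Any $\tau$ in the union above would realize all of them as disjoint curves. Hence no such $\tau$ exists, and the intersection is empty.

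For the first bullet, the same formula shows the intersection equals $\bigsqcup_{\tau \supseteq \sigma} \Teich[\tau] = \oTeich[\sigma]$, using the displayed description of closed strata. For contractibility, I would invoke the statement that $\oTeich[\sigma]$ is homeomorphic to the product of the augmented Teichm\"{u}ller spaces of the components of $\Sigma_{g,n}[\sigma]$. Each of these is contractible by the cited fact that augmented Teichm\"{u}ller spaces are contractible, via the CAT(0) Weil--Petersson completion (Yamada, Wolpert). Since a finite product of contractible spaces is contractible, $\oTeich[\sigma]$ is contractible.

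The only point needing care is the bookkeeping in the decomposition $\oTeich[\sigma] = \bigsqcup_{\tau\supseteq\sigma}\Teich[\tau]$. Here $\tau \supseteq \sigma$ should be understood up to isotopy, so that containing each $\gamma_i$ individually is the same as containing $\sigma$. I would also check that the components of $\Sigma_{g,n}[\sigma]$ are hyperbolic, which holds because the curves are essential and non-isotopic, so the contractibility result applies to each factor. I expect no real obstacle beyond accepting the cited contractibility of augmented Teichm\"{u}ller space.
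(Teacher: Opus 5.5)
Your proposal is correct and follows the paper's proof essentially step for step: it writes each $\oTeich[\gamma_i]$ as a disjoint union of open strata, intersects to get $\bigsqcup_{\tau \supseteq \sigma} \Teich[\tau]$, observes this is empty when $\sigma$ is not a simplex and equals $\oTeich[\sigma]$ otherwise, and deduces contractibility from the product decomposition into contractible augmented Teichm\"{u}ller spaces. Your extra check that the components of $\Sigma_{g,n}[\sigma]$ are hyperbolic is a harmless addition that the paper leaves implicit.
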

\begin{proof}
For each $1 \leq i \leq k$, we have
\[\oTeich[\gamma_i] = \bigsqcup_{\tau \supseteq \{\gamma_i\}} \Teich[\tau].\]
Here the union is over all simplices $\tau$ of $\cC_{g,n}$ containing $\gamma_i$.  It follows that
\[\oTeich[\gamma_1] \cap \cdots \cap \oTeich[\gamma_k] = \bigsqcup_{\tau \supseteq \sigma} \Teich[\tau].\]
Again, the union is over all simplices $\tau$ of $\cC_{g,n}$ containing $\sigma = \{\gamma_1,\ldots,\gamma_k\}$.
If $\sigma$ is not a simplex of $\cC_{g,n}$ then there are no such simplices $\tau$, so the
intersection is empty.  If $\sigma$ is a simplex of $\cC_{g,n}$, then it follows that
\[\oTeich[\gamma_1] \cap \cdots \cap \oTeich[\gamma_k] = \oTeich[\sigma].\]
The closed stratum $\oTeich[\sigma]$ is the product of the augmented Teichm\"{u}ller spaces of the components
of $\Sigma_{g,n}[\sigma]$.  Since each of these augmented Teichm\"{u}ller spaces is contractible,
it follows that $\oTeich[\sigma]$ is contractible.  The lemma follows.
\end{proof}

\section{Covers of the Deligne--Mumford compactification}
\label{section:compactcover}

Fix $g,n \geq 0$ such that $\Sigma_{g,n}$ is hyperbolic.  Let $G < \PMod_{g,n}$ be
a finite-index subgroup.  In \S \ref{section:modulicover}, we proved
that $\Teich_{g,n}/G$ is the analytification of an algebraic
variety $\Moduli_{g,n}(G)$ equipped with a finite map $\pi\colon \Moduli_{g,n}(G) \rightarrow \Moduli_{g,n}$
(see Theorem \ref{theorem:modulicoveralg}).  This section studies the analogue of the
Deligne--Mumford compactification for $\Moduli_{g,n}(G)$.

\subsection{Construction}

The ordinary Deligne--Mumford compactification $\oModuli_{g,n}$ is an irreducible
projective variety containing $\Moduli_{g,n}$ as a dense Zariski open subspace.
The variety $\Moduli_{g,n}(G)$ is irreducible and normal (see Lemma \ref{lemma:modulinormal}).
Recall that in \S \ref{section:normalization} we discussed normalization, which
is the following construction:
\begin{itemize}
\item Let $X$ be an irreducible algebraic variety and let $U \subset X$
be a nonempty Zariski open subset.  Let $p\colon \tU \rightarrow U$ be a finite
surjective map with $\tU$ normal and irreducible.  The {\em normalization} of
$X$ with respect to $p\colon \tU \rightarrow U$ is an irreducible normal algebraic
variety $\tX$ containing $\tU$ as a Zariski open subset together
with an extension of $p$ to a finite map $p\colon \tX \rightarrow X$.  This
always exists and is unique in an appropriate sense.
\end{itemize}
Applying this to our situation, we define $\oModuli_{g,n}(G)$
to be the normalization of $\oModuli_{g,n}$ with respect to the finite
map $\pi\colon \Moduli_{g,n}(G) \rightarrow \Moduli_{g,n}$.  The following lemma
summarizes some properties of $\oModuli_{g,n}(G)$:

\begin{lemma}
\label{lemma:constructdmg}
Fix $g,n \geq 0$ such that $\Sigma_{g,n}$ is hyperbolic.  Let $G < \PMod_{g,n}$ be
a finite-index subgroup.  Then $\oModuli_{g,n}(G)$ is a normal irreducible projective algebraic
variety containing $\Moduli_{g,n}(G)$ as a Zariski open dense subspace, and
the projection $\pi\colon \Moduli_{g,n}(G) \rightarrow \Moduli_{g,n}$ extends to a
finite map $\pi\colon \oModuli_{g,n}(G) \rightarrow \oModuli_{g,n}$.
\end{lemma}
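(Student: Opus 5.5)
Most of the statement comes straight from the definition of normalization in \S\ref{section:normalization}. The only nontrivial property is projectivity, and we will get it from the finite map to the projective variety $\oModuli_{g,n}$.

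\textbf{Step 1: the normalization is defined.} We apply the normalization construction with $X = \oModuli_{g,n}$, $U = \Moduli_{g,n}$ and $p = \pi\colon \Moduli_{g,n}(G) \rightarrow \Moduli_{g,n}$. The hypotheses hold:
\begin{itemize}
\item $\oModuli_{g,n}$ is irreducible (\S\ref{section:dm}) and $\Moduli_{g,n}$ is a nonempty Zariski open subset of it.
\item $\Moduli_{g,n}(G)$ is normal and irreducible. Lemma \ref{lemma:modgsmoothnormal} gives normality. For irreducibility, Lemma \ref{lemma:modulinormal} says the analytification is irreducible, and fact (ii) of \S\ref{section:algebravsanalysis} transfers this to the algebraic variety.
\item $\pi$ is finite, since it is the algebraization of a finite analytic map (Theorem \ref{theorem:modulicoveralg}).
\item $\pi$ is surjective, since $\Teich_{g,n} \rightarrow \Moduli_{g,n}^{\an}$ is surjective and factors through $\Teich_{g,n}/G$.
\end{itemize}
The definition of normalization then gives an irreducible normal variety $\oModuli_{g,n}(G)$ that contains $\Moduli_{g,n}(G)$ as a Zariski open subset, together with a finite map $\pi\colon \oModuli_{g,n}(G) \rightarrow \oModuli_{g,n}$ extending $\pi$.

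\textbf{Step 2: density.} Since $\oModuli_{g,n}(G)$ is irreducible, every nonempty Zariski open subset of it is dense. In particular $\Moduli_{g,n}(G)$ is dense.

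\textbf{Step 3: projectivity.} This is the one point needing an outside argument. The map $\pi^{\an}\colon \oModuli_{g,n}(G)^{\an} \rightarrow \oModuli_{g,n}^{\an}$ is finite, and $\oModuli_{g,n}$ is projective. Theorem \ref{theorem:gaga} therefore says this finite map has a unique algebraization $Y \rightarrow \oModuli_{g,n}$ with $Y$ projective. But $\pi\colon \oModuli_{g,n}(G) \rightarrow \oModuli_{g,n}$ is itself an algebraization of the same analytic map, so by uniqueness $\oModuli_{g,n}(G) \cong Y$, and hence $\oModuli_{g,n}(G)$ is projective.

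An alternative for Step 3 is the standard fact that a variety finite over a projective variety is projective, since the pullback of an ample bundle under a finite map is ample. I would cite this if the GAGA uniqueness statement is considered to be only about analytic spaces over $X$. Properness, at least, is immediate: preimages of compact sets under a proper map are compact, so $\oModuli_{g,n}(G)^{\an}$ is compact.
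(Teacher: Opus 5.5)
Your proposal is correct and follows the same route as the paper. Everything except projectivity is read off from the normalization construction of \S\ref{section:normalization}, and projectivity comes from Theorem~\ref{theorem:gaga} applied to the finite map $\pi\colon \oModuli_{g,n}(G) \rightarrow \oModuli_{g,n}$, using exactly the uniqueness argument you give. Your fallback in Step 3 is unnecessary, since the paper's notion of a unique algebraization already gives an isomorphism of algebraic varieties over $\oModuli_{g,n}$.
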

\begin{proof}
Everything except for the fact that $\oModuli_{g,n}(G)$ is projective follows
from our discussion of normalizations in \S \ref{section:normalization}.  The
fact that $\oModuli_{g,n}(G)$ is projective follows from the fact that
$\oModuli_{g,n}$ is projective and there exists a finite map $\pi\colon \oModuli_{g,n}(G) \rightarrow \oModuli_{g,n}$
(see Theorem \ref{theorem:gaga}).
\end{proof}

\subsection{Analytic construction}

Recall that $\oModuli_{g,n}$ is the quotient of the augmented Teichm\"{u}ller space $\oTeich_{g,n}$
by $\PMod_{g,n}$.  Since $\oTeich_{g,n}$ is in general not a complex manifold or even a locally compact
topological space, it is not clear that $\oTeich_{g,n}/\PMod_{g,n}$ is anything other than just
a topological space.  However, Hubbard--Koch \cite{HubbardKoch} constructed a natural analytic
structure on $\oTeich_{g,n}/\PMod_{g,n}$ and proved that there is an analytic isomorphism
$\oTeich_{g,n}/\PMod_{g,n} \cong \oModuli_{g,n}$.

For a finite-index subgroup $G < \PMod_{g,n}$,
Hinich--Vaintrob proved that there is a similar description of $\oModuli_{g,n}(G)$.
In \cite[Theorem 6.1.1]{HinichVaintrob}, they proved the following:
\begin{itemize}
\item The space $\oTeich_{g,n}/G$ can be given an analytic structure.
\item With respect to this analytic structure, $\oTeich_{g,n}/G$ is an irreducible normal analytic variety.
\item The normal analytic variety $\oTeich_{g,n}/G$ contains $\Moduli_{g,n}(G) = \Teich_{g,n}/G$ as an open subvariety,
and the finite map $\Moduli_{g,n}(G) \rightarrow \Moduli_{g,n}$ extends to a finite
map $\oTeich_{g,n}/G \rightarrow \oModuli_{g,n}$.
\end{itemize}
Since $\oModuli_{g,n}$ is projective and $\oTeich_{g,n}/G \rightarrow \oModuli_{g,n}$ is a finite
map, it follows from Serre's GAGA theorems that $\oTeich_{g,n}/G$ is the analytification
of a projective algebraic variety (see Theorem \ref{theorem:gaga}).  This variety
is normal, so by the uniqueness of normalization it must be the case that there is an analytic
isomorphism between $\oTeich_{g,n}/G$ and $\oModuli_{g,n}(G)$.  This isomorphism fits into a commutative diagram
\[\begin{tikzcd}
\oTeich_{g,n}/G \arrow{r}{\cong} \arrow{d} & \oModuli_{g,n}(G) \arrow{d}{\pi} \\
\oTeich_{g,n}/\PMod_{g,n} \arrow{r}{\cong} & \oModuli_{g,n},
\end{tikzcd}\]
where the bottom isomorphism is the one constructed by Hubbard--Koch.

\subsection{Smoothness}

As was noted by Mostafa \cite{Mostafa} and van Geemen--Oort \cite{VanGeemenOort},
the variety $\oModuli_{g,n}(G)$ is in general not smooth even if $\Moduli_{g,n}(G)$ is smooth.
Singularities can occur along the boundary.  For $n=0$, Looijenga \cite{LooijengaSmooth}
proved that there exist finite-index normal subgroups $G \lhd \PMod_{g}$ such that
$\oModuli_{g}(G)$ is smooth.  This was extended to $n>0$ by
Boggi--Pikaart (\cite{BoggiPikaart}; see also \cite[\S 7.2.4]{HinichVaintrob}),
who proved the following:

\begin{theorem}[{Boggi--Pikaart \cite[Corollary 2.10]{BoggiPikaart}}]
\label{theorem:omodulismooth}
Fix $g,n \geq 0$ such that $\Sigma_{g,n}$ is hyperbolic.  Let
$\Gamma < \PMod_{g,n}$ be a finite-index subgroup.
Then there exists a finite-index subgroup $G < \PMod_{g,n}$ such that:
\begin{itemize}
\item[(i)] $G$ is a normal subgroup of $\PMod_{g,n}$ with $G \subset \Gamma$.
\item[(ii)] $\oModuli_{g,n}(G)$ is a smooth projective variety.
\end{itemize}
\end{theorem}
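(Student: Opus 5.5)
We will follow Looijenga's strategy, adapted by Boggi--Pikaart: choose $G$ as an intersection of $\Gamma$'s normal core with a sufficiently deep ``geometric level'' subgroup, then check smoothness of $\oModuli_{g,n}(G)$ locally at each boundary point via the analytic description $\oModuli_{g,n}(G) \cong \oTeich_{g,n}/G$ of Hinich--Vaintrob.

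First, replace $\Gamma$ by its normal core in $\PMod_{g,n}$, which is finite index and normal. Then intersect with a level subgroup $\PMod_{g,n}(\ell)$ for $\ell \geq 3$, and with a further ``Looijenga level'' subgroup. For Looijenga's construction, one uses a finite characteristic cover $\tSigma \rightarrow \Sigma_{g,n}$ (for instance the cover associated to a characteristic finite-index subgroup of $\pi_1$, such as $\pi_1^{(2)}$ with mod-$m$ abelianization). The subgroup $G$ consists of mapping classes whose lifts act trivially on $\HH_1(\tSigma;\Z/m)$. This $G$ is normal, finite index, contained in $\Gamma$, and torsion-free by Lemma \ref{lemma:leveltorsionfree}. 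Hence $\Moduli_{g,n}(G)$ is smooth (Lemma \ref{lemma:modgsmoothnormal}), and only boundary points need checking.

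Near a boundary point lying in the stratum of a simplex $\sigma=\{\gamma_1,\ldots,\gamma_k\}$, the local model of $\oModuli_{g,n}$ is a quotient of a smooth chart $\bbD^{3g-3+n}$. The plumbing coordinates $t_1,\ldots,t_k$ correspond to the curves $\gamma_i$, and we quotient by the stabilizer of the stable curve. A neighborhood in $\oTeich_{g,n}/G$ is then obtained by taking the quotient of the universal cover of the punctured polydisk by $G \cap \PMod\Span{\sigma}$. The key requirements are two. First, $G \cap \SpanSet{$T_{\gamma}$}{$\gamma \in \sigma$}$ should be exactly the lattice generated by the powers $T_{\gamma_i}^{m_i}$. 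Second, $G \cap \PMod\Span{\sigma}$ should act on the node-smoothing and stratum coordinates only through this twist lattice, up to a torsion-free quotient acting freely on $\Teich[\sigma]$. Given these, the local model becomes $(\bbD^{\ast})^k$ with the monodromy quotient taken by $T_{\gamma_i} \mapsto T_{\gamma_i}^{m_i}$, compactified by $s_i = t_i^{1/m_i}$, times a smooth quotient of the stratum. This is smooth. By normality and uniqueness of normalization, this explicit smooth model is the local analytic structure of $\oModuli_{g,n}(G)$.

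The main obstacle is the first requirement, together with the absence of ``extra'' finite-order elements of $G \cap \PMod\Span{\sigma}$ acting trivially on $\Teich[\sigma]$ but nontrivially on the normal directions. Examples are multitwists with fractional-looking behavior, or elements of $G$ that project into $\PMod[\sigma]$ as torsion, such as a hyperelliptic-type involution of a component. Looijenga's characteristic-cover level is designed for exactly this. Any element of $G$ preserving $\sigma$ is pure on each component, since it acts trivially mod $m$ on homology of the cover and the relevant torsion-freeness theorem applies on each piece $\Sigma_{g,n}[\sigma]$. Also, a multitwist $\prod T_{\gamma_i}^{a_i}$ lies in $G$ if and only if each $T_{\gamma_i}^{a_i}$ does, because homology classes of lifts of distinct $\gamma_i$ are independent in the cover. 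I would try to verify this independence first, using that lifts of the $\gamma_i$ to $\tSigma$ are nonseparating when the cover is deep enough, and then deduce the componentwise splitting. The remaining smoothness check is then the explicit computation above.
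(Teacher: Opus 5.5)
\textbf{Gap: the choice of the geometric level is never tied to $\Gamma$.} The paper does not prove this statement; it cites Boggi--Pikaart (Looijenga for $n=0$). Right after the statement it remarks that a finite-index subgroup of a level with smooth compactification can have a singular compactification, and your argument runs into exactly that. Write $N_\sigma=\langle T_\gamma : \gamma\in\sigma\rangle$ and let $G_0$ be your geometric level, so that $G=\Gamma^{\mathrm{core}}\cap\PMod_{g,n}(\ell)\cap G_0$. Your reason that $G\cap N_\sigma$ splits ("a multitwist lies in $G$ iff each factor does") uses only the homology of $\tSigma$, so at best it concerns $G_0\cap N_\sigma$. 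Membership in $G$ also requires membership in $\Gamma^{\mathrm{core}}$ and $\PMod_{g,n}(\ell)$, and those conditions couple the exponents.

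Here is a concrete failure. Let $\sigma=\{\gamma_1,\gamma_2\}$ be a bounding pair (these exist, e.g., for $g\geq 3$), oriented so that $[\gamma_1]=[\gamma_2]=c$, a primitive class. Then $T_{\gamma_1}^uT_{\gamma_2}^v$ acts on $\HH_1(\Sigma_{g,n};\Z)$ by $x\mapsto x\pm(u+v)\langle x,c\rangle c$, so it lies in $\PMod_{g,n}(p)$ if and only if $p\mid u+v$. Since $G_0$ is normal and $\gamma_1,\gamma_2$ lie in one $\PMod_{g,n}$-orbit, a split $G_0\cap N_\sigma$ is $\{T_{\gamma_1}^{m_0a}T_{\gamma_2}^{m_0b}\}$ for a single $m_0$. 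Take $\Gamma=\PMod_{g,n}(p)$ with $p$ a prime not dividing $m_0$. Then $G\cap N_\sigma$ contains the bounding pair map $T_{\gamma_1}^{m_0}T_{\gamma_2}^{-m_0}$ but not $T_{\gamma_1}^{m_0}$, so it is not split. By the toric computation behind your local model, the normalization of $\bbD^2$ over $(\bbD^{\ast})^2$ for this lattice is a cyclic quotient singularity. Hence $\oModuli_{g,n}(G)$ is singular at points of the stratum of $\sigma$; this is the same mechanism that makes $\oModuli_g(\ell)$ singular. The intersection with $\PMod_{g,n}(\ell)$ causes the same problem if $\ell\nmid m_0$.

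\textbf{How to fix it.} Choose $G_0$ after, and depending on, $\Gamma$. Let $e$ be the exponent of the finite group $\PMod_{g,n}/(\Gamma^{\mathrm{core}}\cap\PMod_{g,n}(\ell))$. Suppose $G_0$ has split twist lattices all of whose exponents are divisible by $e$; the natural way to arrange this is to take the level $m$ on the cover to be a multiple of $e$. Then $G_0\cap N_\sigma\subset\Gamma^{\mathrm{core}}\cap\PMod_{g,n}(\ell)$, so $G\cap N_\sigma=G_0\cap N_\sigma$ and the splitting passes to $G$. The purity requirement passes to subgroups automatically.

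\textbf{The independence step you plan to verify first cannot be done as stated.} For the bounding pair above, $\gamma_1\cup\gamma_2$ cobounds a subsurface $S$. The lifts of $\gamma_1$ and $\gamma_2$ to $\tSigma$ are exactly the boundary curves of the components of the preimage of $S$. So their classes in $\HH_1(\tSigma)$ satisfy one linear relation per component, in every finite cover, whether or not the lifts are nonseparating. What you actually need is weaker: a lifted multitwist acting trivially on $\HH_1(\tSigma;\Z/m)$ must have each exponent separately divisible. Establishing this requires a specific choice of cover. It is the real content of Looijenga's theorem and of Boggi--Pikaart's extension of it, not a routine check.

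\textbf{The purity step needs a different justification.} You need that elements of $G$ stabilizing $\sigma$ lie in $\PMod\Span{\sigma}$ and have torsion-free image in $\PMod[\sigma]$. This should come from Ivanov's theorem that elements of level $\ell\geq 3$ are pure. It does not follow from applying torsion-freeness piece by piece, because the homology of a component of $\Sigma_{g,n}[\sigma]$ need not inject into the homology of the whole surface.
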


\begin{remark}
If $G_1,G_2 < \PMod_{g,n}$ are finite-index subgroups such that $G_1 < G_2$, then
there is a projection map $\oModuli_{g,n}(G_1) \rightarrow \oModuli_{g,n}(G_2)$.
Assume that $\oModuli_{g,n}(G_2)$ is smooth.  Experience with the open moduli
space might lead the reader to expect that $\oModuli_{g,n}(G_1)$ is smooth.  However,
the map $\oModuli_{g,n}(G_1) \rightarrow \oModuli_{g,n}(G_2)$ is typically not a covering
map, so we cannot deduce that $\oModuli_{g,n}(G_1)$ is smooth.  In fact, it need not be.
\end{remark}

\subsection{Boundary}

Define $\partial \oModuli_{g,n}(G) = \oModuli_{g,n}(G) \setminus \Moduli_{g,n}(G)$.
Boggi--Pikaart \cite{BoggiPikaart} proved the following.  In its statement, to
handle the trivial case $(g,n) = (0,3)$ where $\Moduli_{0,3}$ is a point we
regard the empty variety as a simple normal crossing variety.

\begin{theorem}[{Boggi--Pikaart \cite[Proposition 2.1]{BoggiPikaart}}]
\label{theorem:boundarysimple}
Fix $g,n \geq 0$ such that $\Sigma_{g,n}$ is hyperbolic.
Let $G \lhd \PMod_{g,n}$ be a finite-index normal subgroup such that the following hold:\footnote{Condition (a)
does not appear in the statement of \cite[Proposition 2.1]{BoggiPikaart}, but is needed to ensure
that the standing assumptions in \cite{BoggiPikaart} hold -- see \cite[p.\ 173]{BoggiPikaart}.}
\begin{itemize}
\item[(a)] For some $\ell \geq 3$, we have $G < \PMod_{g,n}(\ell)$.
\item[(b)] $\oModuli_{g,n}(G)$ is a smooth projective variety.
\end{itemize}
Then $\partial \oModuli_{g,n}(G)$ is a simple normal crossing variety.
\end{theorem}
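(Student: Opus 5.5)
Since $\oModuli_{g,n}(G)$ is smooth by (b), the natural plan is to show that the boundary $\partial \oModuli_{g,n}(G)$ is locally, near every point, a union of smooth coordinate hyperplanes $\{z_j = 0\}$ in a holomorphic chart, and that each irreducible component is smooth. I would work analytically using the Hinich--Vaintrob description $\oModuli_{g,n}(G) \cong \oTeich_{g,n}/G$. The boundary is the image of $\partial\oTeich_{g,n}$, i.e.\ the union of the images of the closed strata $\oTeich[\gamma]$ over essential curves $\gamma$. Its components are therefore indexed by $G$-orbits of vertices of $\cC_{g,n}$: the component $D_{[\gamma]}$ is the image of $\oTeich[\gamma]$, which equals $\oTeich[\gamma]/\mathrm{Stab}_G(\gamma)$.

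The first step is a local model. Take a point $x\in\Teich[\sigma]$ with $\sigma=\{\gamma_1,\ldots,\gamma_k\}$. Near the image of $x$ in $\oModuli_{g,n}$ there is the standard plumbing chart: a neighborhood of the form $(\bbD^k \times B)/\Aut$, where $B$ is a small ball in $\Teich[\sigma]$ and the $\bbD^k$ factor consists of plumbing parameters $t_i$, with $\{t_i=0\}$ the locus where $\gamma_i$ stays a node. Near a preimage in $\oTeich_{g,n}/G$, one passes to the cover determined by $G\cap\PMod\Span{\sigma}$. The Dehn twists $T_{\gamma_i}$ act on the universal-cover coordinates $\log t_i$ by translation. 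If $T_{\gamma_i}^{m_i}$ generates the twist part in $G$, the local coordinate becomes $s_i=t_i^{1/m_i}$. The remaining finite group $H$ is the image of the stabilizer in $\PMod[\sigma]$, and the local model is $(\bbD^k\times B)/H$ written in $(s,b)$ coordinates.

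The next step uses hypothesis (a). Since $G<\PMod_{g,n}(\ell)$ with $\ell\ge 3$, every element of $G$ stabilizing $\sigma$ fixes each $\gamma_i$ with orientation; by Ivanov, elements of level-$\ell$ groups preserving a multicurve preserve each component. The group $H$ therefore acts diagonally on the $s_i$ by roots of unity, preserving each hyperplane $\{s_i=0\}$ separately. Smoothness (b) together with Chevalley--Shephard--Todd (Remark \ref{remark:notmanifold}) then forces $H$ to be generated by pseudo-reflections, and these must be of the form $s_i\mapsto\zeta s_i$. The quotient is then smooth with coordinates $s_i^{e_i}$, and the boundary is $\prod_i s_i^{e_i}=0$. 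This gives normal crossings locally, and the $k$ local branches through the point correspond to the $k$ distinct curves $\gamma_i$.

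The remaining step is to show that each component $D_{[\gamma]}$ has no self-intersection, which is what separates simple normal crossing from merely normal crossing. Two local branches through one point lie in the same component exactly when $\gamma_i$ and $\gamma_j$ are in the same $G$-orbit. For the crossing to be simple, we need that $\gamma_i=f(\gamma_j)$ with $f\in G$ forces $i=j$ whenever $\gamma_i,\gamma_j$ are disjoint and distinct. I expect this to be the main obstacle. My approach would be to show that a level-$\ell$ element cannot map a curve to a distinct curve disjoint from it. If the curves are nonseparating and disjoint, their homology classes mod $\ell$ are either distinct or, when they cobound, the images of the two complementary sides differ in homology. If the curves are separating, I would compare the puncture and genus partitions, using that $f$ acts trivially on $\HH_1(\Sigma_{g,n};\Z/\ell)$. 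This is the point where the precise definition of $\PMod_{g,n}(\ell)$ using punctured homology matters. Finally, each component would be smooth: it is the normalization-free image $\oTeich[\gamma]/\mathrm{Stab}_G(\gamma)$, which is smooth by the local model restricted to $\{s_i=0\}$.
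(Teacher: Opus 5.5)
The paper gives no proof of this statement; it cites Boggi--Pikaart, where normal crossings come from earlier results in that paper and Proposition~2.1 supplies smoothness of the components. Your direct argument has a genuine gap at the step you flagged, and the lemma you propose there is false: a level-$\ell$ element \emph{can} carry a curve to a distinct curve disjoint from it. Take disjoint, non-isotopic nonseparating curves $\gamma,\gamma'$ that cobound a subsurface containing no punctures, for example cutting $\Sigma_3$ into two copies of $\Sigma_{1,2}$. These curves are homologous. By a classical result of Johnson, the Torelli group acts transitively on oriented nonseparating curves in a fixed homology class. So some $f$ in the Torelli group, and hence in every $\PMod_3(\ell)$, has $f(\gamma)=\gamma'$. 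For $G=\PMod_3(\ell)$, the component $\Delta[\gamma,G]$ therefore genuinely passes through itself along the stratum where $\gamma$ and $\gamma'$ are pinched. Your simplicity argument uses only (a) and homology, so it cannot work; hypothesis (b) must be used there, and in your plan it never is. Your final sentence, which identifies the component with $\oTeich[\gamma]/\mathrm{Stab}_G(\gamma)$, already presupposes that no such $f$ exists.

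The same phenomenon breaks your local model. The twist lattice $\Lambda=G\cap\langle T_{\gamma_1},\ldots,T_{\gamma_k}\rangle$ need not be of the form $\bigoplus_i\langle T_{\gamma_i}^{m_i}\rangle$. In the example above, $T_{\gamma'}T_\gamma^{-1}\in G$ while $T_\gamma^a\in G$ only when $\ell\mid a$. The normalized local model is then a ball times the singular toric surface $\C^2/\mu_\ell$ with weights $(1,-1)$; these are the Mostafa/van Geemen--Oort singularities.

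Conversely, the finite group $H$ that you treat with Chevalley--Shephard--Todd is trivial under (a). By Ivanov, elements of $\PMod_{g,n}(\ell)$ with $\ell\geq 3$ are pure. Hence an element preserving $\sigma$ whose image in $\PMod[\sigma]$ has finite order is a multitwist about curves of $\sigma$.

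So (b) should be used as follows. The local model is the toric normalization attached to $\Lambda$, times a ball. This is smooth if and only if $\Lambda$ is a product lattice, which gives normal crossings with branches $\{s_i=0\}$.

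For simplicity, suppose $f\in G$ and $f(\gamma)=\gamma'$ is disjoint from and distinct from $\gamma$. Normality gives $T_{\gamma'}T_\gamma^{-1}=f\cdot(T_\gamma f^{-1}T_\gamma^{-1})\in G$. A product lattice containing $(1,-1)$ also contains $(1,0)$, so $T_\gamma\in G$. This is impossible for nonseparating $\gamma$ at level $\ell\geq 3$.

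The remaining configurations are non-cobounding nonseparating pairs, cobounding pairs with punctures on both sides, and separating curves. These can then be handled by homology mod $\ell$ and the fact that $G$ fixes the punctures, roughly as you indicated.
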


\begin{remark}
The reference \cite[Proposition 2.1]{BoggiPikaart} only asserts
that each irreducible component of $\partial \oModuli_{g,n}(G)$ is
smooth, but as the first paragraph of its proof notes earlier
results in \cite{BoggiPikaart} imply that $\partial \oModuli_{g,n}(G)$ is
a normal crossing variety, so this is all that needs to be proved.
\end{remark}

\subsection{Clutching maps}

Let $\sigma$ be a simplex of the curve complex $\cC_{g,n}$.  Recall that
$\Sigma_{g,n}[\sigma]$ is the (possibly disconnected) topological surface obtained
by deleting the curves in $\sigma$ from $\Sigma_{g,n}$.  Let $S_1,\ldots,S_k$
be the components of $\Sigma_{g,n}[\sigma]$.  Each $S_i$ is a hyperbolic topological
surface, so we can talk about its augmented Teichm\"{u}ller space $\oTeich(S_i)$,
its pure mapping class group $\PMod(S_i)$,
and the Deligne--Mumford compactification $\oModuli(S_i)$ of its moduli space.
We then have
\begin{align*}
\oTeich[\sigma]  &= \oTeich(S_1) \times \cdots \times \oTeich(S_k),\\
\PMod[\sigma] &= \PMod(S_1) \times \cdots \times \PMod(S_k),\\
\oModuli[\sigma] &= \oModuli(S_1) \times \cdots \times \oModuli(S_k).
\end{align*}
The stabilizer subgroup $\PMod\Span{\sigma}$ discussed in \S \ref{section:stablesurface}
acts on $\oTeich[\sigma]$.  This action factors through the homomorphism
$\PMod\Span{\sigma} \rightarrow \PMod[\sigma]$ whose 
kernel is the subgroup of $\PMod\Span{\sigma}$ generated by the Dehn twists about
the curves in $\sigma$:
\[1 \rightarrow \SpanSet{$T_{\gamma}$}{$\gamma \in \sigma$} \rightarrow \PMod\Span{\sigma} \rightarrow \PMod[\sigma] \rightarrow 1.\]
In \S \ref{section:clutching1}, we discussed the algebraic clutching maps
$\oModuli[\sigma] \rightarrow \partial \oModuli_{g,n}$.  They are finite maps, so since $\oModuli[\sigma]$ is
an irreducible projective variety their images $\Delta[\sigma]$ are closed projective subvarieties
of $\partial \oModuli_{g,n}$.

In \cite[\S 7.3]{HinichVaintrob}, these clutching maps are extended to $\oModuli_{g,n}(G)$.
For $1 \leq i \leq k$, let $G_i < \PMod(S_i)$ be the intersection of $\PMod(S_i)$
with the image of $G \cap \PMod\Span{\sigma}$ in $\PMod[\sigma]$.  Set
\[G[\sigma] = G_1 \times \cdots \times G_k.\]
We remark that $G[\sigma]$
is contained in the image of $G \cap \PMod\Span{\sigma}$ in $\PMod[\sigma]$, but might
not equal that image.  The point here is that the image of $G \cap \PMod\Span{\sigma}$ in 
$\PMod[\sigma] = \PMod(S_1) \times \cdots \times \PMod(S_k)$ might not decompose
as a product of subgroups of the factors $\PMod(S_i)$.  Each $G_i$ is a finite-index subgroup
of $\PMod(S_i)$, so $G[\sigma]$ is a finite-index subgroup of $\PMod[\sigma]$.

Let $\oModuli(S_i,G_i)$ be the branched cover of $\oModuli(S_i)$ corresponding to $G_i$, so 
$\oModuli(S_i,G_i)$ is an irreducible projective algebraic variety and 
there is an analytic
isomorphism $\oTeich(S_i)/G_i \cong \oModuli(S_i,G_i)$.  Define
\[\oModuli[\sigma,G] = \oModuli(S_1,G_1) \times \cdots \times \oModuli(S_k,G_k).\]
This is an irreducible projective variety, and there is a clutching
map $\oModuli[\sigma,G] \rightarrow \partial \oModuli_{g,n}(G)$ fitting into a commutative
diagram
\[\begin{tikzcd}
\oTeich[\sigma] \arrow{d}{/G[\sigma]} \arrow{r} & \oTeich_{g,n} \arrow{d}{/G} \\
\oModuli[\sigma,G] \arrow{r} & \oModuli_{g,n}(G).
\end{tikzcd}\]
We have the following:

\begin{lemma}
\label{lemma:gclutchingfinite}
Fix $g,n \geq 0$ such that $\Sigma_{g,n}$ is hyperbolic.  Let $G < \PMod_{g,n}$ be a finite-index
subgroup and let $\sigma$ be a simplex of the curve complex $\cC_{g,n}$.  Then the clutching
map $\oModuli[\sigma,G] \rightarrow \oModuli_{g,n}(G)$ is a finite algebraic map.
\end{lemma}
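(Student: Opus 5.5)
We need to show the clutching map $c_G\colon \oModuli[\sigma,G] \rightarrow \oModuli_{g,n}(G)$ is algebraic and finite. The plan is to get everything from the commutative square
\[\begin{tikzcd}
\oModuli[\sigma,G] \arrow{r}{c_G} \arrow{d}{p} & \oModuli_{g,n}(G) \arrow{d}{\pi} \\
\oModuli[\sigma] \arrow{r}{c} & \oModuli_{g,n},
\end{tikzcd}\]
whose vertical arrows are the finite maps from Lemma \ref{lemma:constructdmg} (applied to each factor $\oModuli(S_i,G_i)$; a product of finite maps is finite) and whose bottom arrow is the finite algebraic clutching map from \S\ref{section:clutching1}. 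The square commutes on underlying analytic spaces because both composites are induced from the inclusion $\oTeich[\sigma] \hookrightarrow \oTeich_{g,n}$, followed by the quotient by $\PMod_{g,n}$, and $G[\sigma] \subset \PMod[\sigma]$.

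\textbf{Algebraicity.} The map $c_G$ is constructed analytically, as a map of Hinich--Vaintrob analytic structures. Both source and target are projective (Lemma \ref{lemma:constructdmg} and products), hence proper. The fact recalled in \S\ref{section:algebravsanalysis} says that an analytic map between proper algebraic varieties is algebraic. So $c_G$ is algebraic once we know it is analytic. That analyticity is the content of the construction in \cite[\S 7.3]{HinichVaintrob}, and I will take it as given there.

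\textbf{Finiteness.} We need $c_G$ to be proper with finite fibers.
\begin{itemize}
\item[(1)] Properness is automatic, since the source is compact and the target is Hausdorff.
\item[(2)] For finite fibers, take $y \in \oModuli_{g,n}(G)$. Then $c_G^{-1}(y) \subset p^{-1}\bigl(c^{-1}(\pi(y))\bigr)$ by commutativity.
\item[(3)] The set $c^{-1}(\pi(y))$ is finite because $c$ is finite.
\item[(4)] Each fiber of $p$ is finite because $p$ is finite.
\end{itemize}
Hence $c_G^{-1}(y)$ is finite, so $c_G$ is finite in the analytic sense. By the definition in \S\ref{section:finitemaps}, this means it is finite as an algebraic map.

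The main obstacle is justifying that $c_G$ is an analytic map at all, rather than merely continuous, since the analytic structure on $\oTeich_{g,n}/G$ is subtle. If I could not simply cite Hinich--Vaintrob, I would proceed as follows.
\begin{itemize}
\item[(a)] Use the normality of $\oModuli[\sigma,G]$, which is normal as a product of normal varieties.
\item[(b)] Note that $c_G$ is continuous, and analytic on the dense open set where each factor lies in its open moduli space. There, locally, it is the lift through the finite map $\pi$ of the analytic map $c \circ p$, and $\pi$ is an unbranched cover over a dense open set.
\item[(c)] Extend across the remaining analytic subvariety using the Riemann extension property defining normality, applied to local coordinate functions of $c_G$; these are bounded by continuity.
\end{itemize}
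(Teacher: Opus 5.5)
Your argument is correct and is essentially the paper's proof: you take analyticity from the Hinich--Vaintrob construction, use projectivity of the source and target to get algebraicity and properness, and read off finite fibers from the commutative square using that $c$ and $p$ are finite. One caution about your optional fallback sketch, which the main argument does not need: step (b) fails as written, because the image of $c_G$ lies in $\partial \oModuli_{g,n}(G)$, while the locus over which you know $\pi$ is an unbranched cover is contained in the open part $\Moduli_{g,n}$ (indeed $\pi$ is typically ramified along the boundary divisors, since $G$ contains only proper powers of the relevant Dehn twists), so lifting $c \circ p$ through $\pi$ gives no analyticity on the open stratum without further input about $\pi$ along the boundary.
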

\begin{proof}
It follows from its construction in \cite[\S 7.3]{HinichVaintrob} that the clutching map is analytic.
Since its domain and codomain are both
projective algebraic varieties, it is automatically algebraic (cf.\ the discussion
in \S \ref{section:algebravsanalysis}) and proper.  To prove that it is finite,
we must therefore prove that it has finite fibers (cf.\ \S \ref{section:finitemaps}).  
It fits into a commutative diagram
\[\begin{tikzcd}
\oModuli[\sigma,G] \arrow{d} \arrow{r} & \oModuli_{g,n}(G) \arrow{d} \\
\oModuli[\sigma] \arrow{r} & \oModuli_{g,n},
\end{tikzcd}\]
where the bottom map is the ordinary clutching map.  The ordinary clutching map
is finite, as are the two vertical projections in the above diagram.  We
conclude that the clutching map $\oModuli[\sigma,G] \rightarrow \oModuli_{g,n}(G)$ has
finite fibers, as desired.
\end{proof}

Since it is a finite map, the image of the clutching map
$\oModuli[\sigma,G] \rightarrow \partial \oModuli_{g,n}(G)$ is an irreducible projective
subvariety of $\partial \oModuli_{g,n}(G)$ (see \S \ref{section:finitemaps}).  We denote this image
by $\Delta[\sigma,G]$.

\subsection{Components of the boundary}

For an essential simple closed curve $\gamma$ on $\Sigma_{g,n}$, let $\Delta[\gamma,G] = \Delta[\{\gamma\},G]$,
so $\Delta[\gamma,G]$ is an irreducible projective subvariety 
of $\partial \oModuli_{g,n}(G)$.  If $\gamma$ and $\gamma'$ are essential simple closed
curves such that there exists some $f \in G$ with $f(\gamma) = \gamma'$, then
$\Delta[\gamma,G] = \Delta[\gamma',G]$.  In fact, the following holds:

\begin{lemma}
\label{lemma:gboundarydivisors}
Fix $g,n \geq 0$ such that $\Sigma_{g,n}$ is hyperbolic.
Let $G < \PMod_{g,n}$ be a finite-index subgroup and
let $\Lambda = \{\gamma(1),\ldots,\gamma(k)\}$ be a set containing exactly one representative
of each $G$-orbit of essential simple closed curves on $\Sigma_{g,n}$.
Then the irreducible components of $\partial \oModuli_{g,n}(G)$ are the $k$ sets
$\Delta[\gamma(1),G],\ldots,\Delta[\gamma(k),G]$.
\end{lemma}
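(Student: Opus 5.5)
The plan is to work through the finite map $\pi\colon \oModuli_{g,n}(G) \rightarrow \oModuli_{g,n}$ and the analytic model $\oTeich_{g,n}/G \cong \oModuli_{g,n}(G)$. The argument has three parts: show the boundary is the union of the $\Delta[\gamma(i),G]$; show each is a divisor, so none is contained in another; and show that distinct representatives give distinct sets.

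\emph{Step 1: the boundary is covered.} A point of $\partial \oModuli_{g,n}(G)$ is the image of a point of $\partial \oTeich_{g,n}$. That point lies in some stratum $\Teich[\sigma]$, hence in $\oTeich[\gamma]$ for any $\gamma \in \sigma$. By the commutative diagram for the clutching map $\oModuli[\{\gamma\},G] \rightarrow \oModuli_{g,n}(G)$, the image of $\oTeich[\gamma]$ in $\oTeich_{g,n}/G$ is exactly $\Delta[\gamma,G]$. Replacing $\gamma$ by its $G$-orbit representative does not change this image, so
\[\partial \oModuli_{g,n}(G) = \Delta[\gamma(1),G] \cup \cdots \cup \Delta[\gamma(k),G].\]
The set $\Lambda$ is finite because $G$ has finite index and there are finitely many $\PMod_{g,n}$-orbits of curves. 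Each $\Delta[\gamma(i),G]$ is an irreducible closed subvariety by Lemma \ref{lemma:gclutchingfinite} and \S\ref{section:finitemaps}.

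\emph{Step 2: each piece is a divisor.} Since the clutching map is finite, it preserves dimension, so
\[\dim \Delta[\gamma,G] = \dim \oModuli[\{\gamma\},G] = \dim \oModuli[\{\gamma\}] = 3g-4+n.\]
This is codimension one, as computed in \S\ref{section:boundarydivisors}. Consequently none of the $\Delta[\gamma(i),G]$ can properly contain another. Once the sets are shown to be pairwise distinct, they are exactly the irreducible components.

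\emph{Step 3: distinctness.} This is the main obstacle. Suppose $\gamma=\gamma(i)$ and $\gamma'=\gamma(j)$ with $\Delta[\gamma,G]=\Delta[\gamma',G]$. Pick $x\in \Teich[\gamma]$, a point of the open stratum with exactly one node. Its image lies in $\Delta[\gamma',G]$, so some $y\in \oTeich[\gamma']$ has $f(y)=x$ for some $f\in G$. Since strata are preserved, $f(y) \in \Teich[f(\sigma)]$ for the stratum $\Teich[\sigma]$ containing $y$, with $\gamma' \in \sigma$. Because $x$ has a single node, $f(\sigma)=\{\gamma\}$, so $\sigma=\{\gamma'\}$ and $f(\gamma')=\gamma$. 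Thus $\gamma$ and $\gamma'$ lie in the same $G$-orbit, forcing $i=j$.

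The subtle point is justifying that the image of $\oTeich[\gamma]$ in $\oTeich_{g,n}/G$ equals the image of the clutching map, i.e.\ the surjectivity of $\oTeich[\gamma]/G[\{\gamma\}]\rightarrow \oModuli[\{\gamma\},G]$. I would take this from the commutative diagram and the construction in \cite[\S 7.3]{HinichVaintrob}.
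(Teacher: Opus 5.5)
Your proof is correct and is essentially the paper's argument; the paper just refers back to \S\ref{section:boundarydivisors}, where the pieces are shown to cover the boundary, to be closed and irreducible, and each to contain a point lying in none of the others. Your Step 3 in fact proves exactly that last fact, since the image of a one-node point of $\Teich[\gamma]$ lies in $\Delta[\gamma',G]$ only if $\gamma' \in G\gamma$, so the dimension count in your Step 2 is correct but not actually needed.
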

\begin{proof}
Identical to the argument we gave in \S \ref{section:boundarydivisors} for identifying
the irreducible components of $\partial \oModuli_{g,n}$.
\end{proof}

\subsection{Dual complex of boundary}

Recall the following definition from \S \ref{section:dualcomplex}: 
\begin{itemize}
\item Let $X$ be a projective simple normal crossing variety
with irreducible components $X_1,\ldots,X_n$.  Define
\[X^{[p]} = \bigsqcup_{i_0 < \cdots < i_p} X_{i_0} \cap \cdots \cap X_{i_p} \quad \text{for $p \geq 0$}.\]
The {\em dual complex} of $X$ is the semisimplicial set whose set of $p$-simplices is $\pi_0(X^{[p]})$.  In other
words, the $p$-simplices of the dual complex of $X$ are the connected components of $X^{[p]}$
and the face maps correspond to inclusions of connected components.
\end{itemize}
The following strengthening of Theorem \ref{theorem:omodulismooth}
is the reason the moduli spaces $\Moduli_{g,n}(G)$ are
relevant to the topology of quotients of the curve complex $\cC_{g,n}$:

\begin{lemma}
\label{lemma:dualboundary}
Fix $g,n \geq 0$ such that $\Sigma_{g,n}$ is hyperbolic.  Let $\Gamma < \PMod_{g,n}$
be a finite-index subgroup.  
Then there exists a finite-index subgroup $G < \PMod_{g,n}$ such that:
\begin{itemize}
\item[(i)] $G$ is a torsion-free normal subgroup of $\PMod_{g,n}$ with $G \subset \Gamma$.
\item[(ii)] $\oModuli_{g,n}(G)$ is a smooth projective variety. 
\item[(iii)] $\partial \oModuli_{g,n}(G)$ is a simple normal crossing variety
whose dual complex\footnote{In the trivial case $(g,n) = (0,3)$ we have
$\partial \oModuli_{g,n}(G) = \emptyset$, and our convention is that the empty
set is a simple normal crossing variety whose dual complex is the empty set.}
 is homeomorphic to $\cC_{g,n}/G$.
\end{itemize}
\end{lemma}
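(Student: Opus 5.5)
First I choose $G$. Set $\Gamma' = \Gamma \cap \PMod_{g,n}(3)$. It is finite-index in $\PMod_{g,n}$ and lies in $\Gamma$. Apply Theorem \ref{theorem:omodulismooth} to $\Gamma'$. This gives a finite-index normal subgroup $G \lhd \PMod_{g,n}$ with $G \subset \Gamma' \subset \Gamma$ and $\oModuli_{g,n}(G)$ smooth projective. Since $G < \PMod_{g,n}(3)$, Lemma \ref{lemma:leveltorsionfree} shows $G$ is torsion-free, which gives (i) and (ii). Hypotheses (a) and (b) of Theorem \ref{theorem:boundarysimple} then hold with $\ell = 3$, so $\partial \oModuli_{g,n}(G)$ is a simple normal crossing variety. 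The case $(g,n)=(0,3)$ is trivial, since both sides are empty, so from now on I assume $\cC_{g,n} \neq \emptyset$.

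It remains to identify the dual complex $\Delta$ with $\cC_{g,n}/G$. By Lemma \ref{lemma:gboundarydivisors}, the vertices of $\Delta$, i.e.\ the components of $\partial\oModuli_{g,n}(G)$, are the $\Delta[\gamma,G]$. These are in bijection with the $G$-orbits of vertices of $\cC_{g,n}$. For higher simplices I would work upstairs in $\oTeich_{g,n}$ with the quotient map $q\colon \oTeich_{g,n} \to \oTeich_{g,n}/G \cong \oModuli_{g,n}(G)$, using $\Delta[\gamma,G] = q(\oTeich[\gamma])$. The preimage $q^{-1}(\Delta[\gamma,G])$ is $\bigcup_{f \in G} \oTeich[f\gamma]$. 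Take distinct orbit representatives $\gamma(i_0),\ldots,\gamma(i_p)$. By Lemma \ref{lemma:intersectcurves}, the preimage of $\Delta[\gamma(i_0),G]\cap\cdots\cap\Delta[\gamma(i_p),G]$ is the disjoint-by-strata union of the contractible sets $\oTeich[\sigma]$. Here $\sigma$ ranges over simplices $\{f_0\gamma(i_0),\ldots,f_p\gamma(i_p)\}$ of $\cC_{g,n}$, and these are exactly the $p$-simplices of $\cC_{g,n}$ whose vertices lie in the given orbits. I then want to show that the connected components of the intersection downstairs correspond bijectively to $G$-orbits of such simplices $\sigma$, with the component of $\sigma$ equal to $q(\oTeich[\sigma]) = \Delta[\sigma,G]$, which is irreducible and hence connected. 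Then $\pi_0(X^{[p]})$ is the set of $G$-orbits of $p$-simplices whose vertices lie in pairwise distinct $G$-orbits. The face maps clearly correspond to forgetting vertices.

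Two points need care, and I expect them to be the main obstacle.

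\emph{Point (A): distinct orbits.} No simplex $\sigma$ of $\cC_{g,n}$ has two vertices in the same $G$-orbit. Otherwise the component $\Delta[\gamma,G]$ would self-intersect near $q(\Teich[\sigma])$, contradicting that the components of a simple normal crossing variety are smooth, i.e.\ locally have one branch. As a result, every simplex of $\cC_{g,n}$ maps injectively to $\cC_{g,n}/G$. So $\cC_{g,n}/G$ is a genuine $\Delta$-complex, and its simplices are exactly the $G$-orbits of simplices.

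\emph{Point (B): separation of components.} Distinct $G$-orbits of simplices give disjoint images, so that $q(\oTeich[\sigma])$ and $q(\oTeich[\sigma'])$ are different components. The images meet only along $q(\oTeich[\tau])$ for $\tau \supseteq \sigma \cup f\sigma'$. Suppose such a $\tau$ exists and $\sigma$, $f\sigma'$ are distinct sets with the same orbit pattern. Then the vertices of $\sigma \cup f\sigma'$ inside $\tau$ repeat a $G$-orbit, which contradicts (A). The remaining case is $\sigma = f\sigma'$ setwise but with the vertex labels permuted. This is also excluded by (A), since each vertex sits in its own orbit and the labelling is by orbits.

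The local statement in (A) uses the normal crossing structure. Near a point of $q(\Teich[\tau])$, the local branches of $\partial\oModuli_{g,n}(G)$ correspond to the curves of $\tau$, because $G$ is normal, torsion-free, and contained in level $3$, so the local stabilizer is the free abelian twist group. Hence two curves of $\tau$ in the same orbit would give the same global component with two local branches, contradicting smoothness of the components. If this local analysis is too delicate, I would instead appeal to the Boggi--Pikaart setup behind Theorem \ref{theorem:boundarysimple}.

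Finally, a bijection of semisimplicial sets compatible with face maps induces a homeomorphism of geometric realizations $\Delta \cong \cC_{g,n}/G$.
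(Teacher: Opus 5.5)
Your choice of $G$ matches the paper's, except that the paper also intersects with an auxiliary subgroup $H$. Your steps (A) $\Rightarrow$ (B) $\Rightarrow$ bijection on simplices parallel the paper's Claims 1--2. The route differs at the key combinatorial point, and the step carrying your argument is only sketched.

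\textbf{How the paper gets (A).} It never derives (A) from the geometry. Before applying Theorem \ref{theorem:omodulismooth}, it also intersects with a finite-index normal subgroup $H$ having property $(\spadesuit)$: if $f \in H$ and $f(\gamma)$ can be made disjoint from $\gamma$, then $f(\gamma)=\gamma$. This comes from Behrstock--Hagen--Martin--Sisto. With $(\spadesuit)$ in hand, three things become purely combinatorial consequences of Lemma \ref{lemma:intersectcurves}:
\begin{itemize}
\item your property (A);
\item the absence of rotations;
\item your separation statement (B), which is the paper's Claim 2.
\end{itemize}
The only algebro-geometric inputs are then the black-box SNC statement and the global identification $\oModuli_{g,n}(G)\cong\oTeich_{g,n}/G$. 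When you conclude that the pieces $\Delta[\sigma,G]$ are the path components, you should also note what the paper notes: there are finitely many $G$-orbits of simplices, and each $\Delta[\sigma,G]$ is closed.

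\textbf{The cost of your route.} Your (A) rests on a local statement that the paper never establishes. At $q(x)$ with $x\in\Teich[\tau]$, the germ of $\partial\oModuli_{g,n}(G)$ must have exactly $|\tau|$ distinct branches, with the branch for $\gamma$ lying in $q(\oTeich[\gamma])$. This is true, but your stated justification is not the right one. Torsion-freeness and normality do not force the stabilizer $G_x$ into the twist group, because an infinite-order element whose power is a multitwist can fix $x$. What you actually need is:
\begin{itemize}
\item Ivanov's purity theorem for level $\geq 3$: an element of $\PMod_{g,n}(3)$ preserving $\tau$ fixes each curve of $\tau$, and its image in $\PMod[\tau]$ is trivial if it has finite order.
\item The plumbing-coordinate model of $\oTeich_{g,n}$ near $x$, from Hubbard--Koch or Hinich--Vaintrob.
\item The fact that in the normalized local cover determined by $G_x$, the preimage of each coordinate hyperplane is irreducible.
\end{itemize}
With these supplied, your argument goes through. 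It even shows slightly more: every $G$ satisfying the hypotheses of Theorem \ref{theorem:boundarysimple} automatically has (A). The paper's route avoids any local analysis of its own, at the price of citing the $(\spadesuit)$ lemma.
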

\begin{proof}
Choose a finite-index normal subgroup $H < \PMod_{g,n}$ with
the following property:
\begin{itemize}
\item[$(\spadesuit)$] Let $\gamma$ be an essential simple closed curve on $\Sigma_{g,n}$.  For
some $f \in H$, assume that $f(\gamma)$ can be homotoped to be disjoint from $\gamma$.
Then $f(\gamma) = \gamma$.
\end{itemize}
It is well-known that such $H$ exist.  One reference for this
is \cite[Lemma 8.1]{BehrstockHagenMartinSisto}, which proves a much
stronger result.  We remark that the assumptions of \cite[Lemma 8.1]{BehrstockHagenMartinSisto} exclude
the cases $(g,n) \in \{(0,3),(0,4),(1,1)\}$, but in those cases any two disjoint essential simple closed curves
are homotopic, so we can just take $H = \PMod_{g,n}$.

Applying Theorem \ref{theorem:omodulismooth}, we can find a finite-index subgroup
$G < \PMod_{g,n}$ such that:
\begin{itemize}
\item[(i$'$)] $G$ is a normal subgroup of $\PMod_{g,n}$ with $G \subset \Gamma \cap H \cap \PMod_{g,n}(3)$.
\item[(ii)] $\oModuli_{g,n}(G)$ is a smooth projective variety.
\end{itemize}
Condition (i$'$) implies that $G \subset \Gamma$.  It also implies that $G \subset \PMod_{g,n}(3)$,
so by Lemma \ref{lemma:leveltorsionfree} the group $G$ is torsion-free.  Conclusion (i) therefore
holds.  Applying Theorem \ref{theorem:boundarysimple}, we get that
$\partial \oModuli_{g,n}(G)$ is a simple normal crossing variety.  This is half of (iii).
To prove the rest of (iii), we must prove that the dual complex of $\partial \oModuli_{g,n}(G)$
is homeomorphic to $\cC_{g,n}/G$.

By Lemma \ref{lemma:gboundarydivisors}, the irreducible components
of $\partial \oModuli_{g,n}(G)$ are exactly the $\Delta[\gamma,G]$
with $\gamma$ an essential simple closed curve on $\Sigma_{g,n}$.  Moreover, for 
essential simple closed curves $\gamma$ and $\gamma'$
we have $\Delta[\gamma,G] = \Delta[\gamma',G]$ if and only if
$\gamma$ and $\gamma'$ are in the same $G$-orbit.

Let $\sigma = \{\gamma_0,\ldots,\gamma_p\}$ be a set of
essential simple closed curves on $\Sigma_{g,n}$ that are in distinct $G$-orbits.  
We are not assuming that $\sigma$ is a simplex of $\cC_{g,n}$, though
$(\spadesuit)$ does imply that the curves in a simplex of $\cC_{g,n}$ are in distinct
$G$-orbits.  Say that a set $\sigma'$ of
essential simple closed curves on $\Sigma_{g,n}$ is a {\em $G$-rearrangement} of $\sigma$
if there exist $f_0,\ldots,f_p \in G$ such that
\[\sigma' = \{f_0(\gamma_0),\ldots,f_p(\gamma_p)\}.\]
Since the $\gamma_i$ lie in distinct $G$-orbits, the curves $f_i(\gamma_i)$ also
lie in distinct $G$-orbits.  This is an equivalence relation on sets of essential
simple closed curves lying in distinct $G$-orbits.
The heart of our proof is the following two facts:

\begin{claim}{1}
\label{claim:dualboundary.1}
Let $\{\gamma_0,\ldots,\gamma_p\}$ be a set of $(p+1)$ essential simple closed
curves on $\Sigma_{g,n}$ that lie in distinct $G$-orbits.
Then $\Delta[\gamma_0,G] \cap \cdots \cap \Delta[\gamma_p,G]$ is the union
of the set of all $\Delta[\sigma,G]$ such that $\sigma$ is a $p$-simplex of
$\cC_{g,n}$ that is a $G$-rearrangement of $\{\gamma_0,\ldots,\gamma_p\}$.  Each such
$\Delta[\sigma,G]$ is path-connected and closed.
\end{claim}
\begin{proof}[Proof of claim]
For $0 \leq i \leq p$, let $T_i$ be the preimage of $\Delta[\gamma_i,G]$ in
the augmented Teichm\"{u}ller space $\oTeich_{g,n}$.  The set $\Delta[\gamma_0,G] \cap \cdots \cap \Delta[\gamma_p,G]$
is therefore the image in $\oModuli_{g,n}(G)$ of $T_0 \cap \cdots \cap T_p$.
For $0 \leq i \leq p$, we have
\[T_i = \bigcup_{f \in G} \oTeich[f(\gamma_i)].\]
It follows that
\[T_0 \cap \cdots \cap T_p = \bigcup_{f_0,\ldots,f_p \in G} \oTeich[f_0(\gamma_0)] \cap \cdots \cap \oTeich[f_p(\gamma_p)].\]
For $f_0,\ldots,f_p \in G$, Lemma \ref{lemma:intersectcurves} says that
$\oTeich[f_0(\gamma_0)] \cap \cdots \cap \oTeich[f_p(\gamma_p)]$ is nonempty if and only if
$\sigma = \{f_0(\gamma_0),\ldots,f_p(\gamma_p)\}$ is a simplex of $\cC_{g,n}$, in
which case $\oTeich[f_0(\gamma_0)] \cap \cdots \cap \oTeich[f_p(\gamma_p)] = \oTeich[\sigma]$.
The set of all such $\sigma$ is the set of all $p$-simplices of $\cC_{g,n}$ that are $G$-rearrangements
of $\{\gamma_0,\ldots,\gamma_p\}$.
Letting $(\cC_{g,n})^{(p)}_{\gamma_0,\ldots,\gamma_p}$ be the set of $p$-simplices of $\cC_{g,n}$ that
are $G$-rearrangements of $\{\gamma_0,\ldots,\gamma_p\}$, we deduce that
\[T_0 \cap \cdots \cap T_p = \bigcup_{\sigma \in (\cC_{g,n})^{(p)}_{\gamma_0,\ldots,\gamma_p}}  \oTeich[\sigma].\]
Projecting this down to $\oModuli_{g,n}(G)$, we get that
\[\Delta[\gamma_0,G] \cap \cdots \cap \Delta[\gamma_p,G] = \bigcup_{\sigma \in (\cC_{g,n})^{(p)}_{\gamma_0,\ldots,\gamma_p}} \Delta[\sigma,G].\]
For $\sigma \in (\cC_{g,n})^{(p)}_{\gamma_0,\ldots,\gamma_p}$, we have that $\Delta[\sigma,G]$ is path-connected
since $\oTeich[\sigma]$ is contractible (see Lemma \ref{lemma:intersectcurves}).  Also, the subspace
$\Delta[\sigma,G]$ is closed since it is a projective subvariety of $\oModuli_{g,n}(G)$.  The claim follows.
\end{proof}

\begin{claim}{2}
\label{claim:dualboundary.2}
Let $\sigma$ and $\sigma'$ be $p$-simplices of $\cC_{g,n}$ such that $\sigma'$ is
a $G$-rearrangement of $\sigma$.  Then
$\Delta[\sigma,G]$ and $\Delta[\sigma',G]$
are disjoint unless $\sigma$ and $\sigma'$ are in the same $G$-orbit.
\end{claim}
\begin{proof}[Proof of claim]
Assume that $\Delta[\sigma,G] \cap \Delta[\sigma',G] \neq \emptyset$.  We will prove that
$\sigma'$ is in the same $G$-orbit as $\sigma$.
Write $\sigma = \{\gamma_0,\ldots,\gamma_p\}$.  For some $f_0,\ldots,f_p \in G$, we have
$\sigma' = \{f_0(\gamma_0),\ldots,f_p(\gamma_p)\}$.  The preimages of
$\Delta[\sigma,G]$ and $\Delta[\sigma',G]$ in $\oTeich_{g,n}$ are
\begin{align*}
\bigcup_{h \in G} \oTeich[h(\sigma)]  &= \bigcup_{h \in G} \oTeich[h(\gamma_0)] \cap \cdots \cap \oTeich[h(\gamma_p)], \\
\bigcup_{h \in G} \oTeich[h(\sigma')] &= \bigcup_{h \in G} \oTeich[h(f_0(\gamma_0))] \cap \cdots \cap \oTeich[h(f_p(\gamma_p))].
\end{align*}
For the right-hand sides of these, see Lemma \ref{lemma:intersectcurves}.  Since
$\Delta[\sigma,G] \cap \Delta[\sigma',G] \neq \emptyset$, there must exist some $h,h' \in G$ such that
\[\oTeich[h(\gamma_0)] \cap \cdots \cap \oTeich[h(\gamma_p)] \cap \oTeich[h'(f_0(\gamma_0))] \cap \cdots \cap \oTeich[h'(f_p(\gamma_p))] \neq \emptyset.\]
Applying Lemma \ref{lemma:intersectcurves}, we deduce that
\[\{h(\gamma_0),\ldots,h(\gamma_p),h'(f_0(\gamma_0)),\ldots,h'(f_p(\gamma_p))\}\]
is a simplex of $\cC_{g,n}$.  For $0 \leq i \leq p$, this implies in particular
that $h(\gamma_i)$ can be homotoped to be disjoint from $h'(f_i(\gamma_i))$.  We have
$h,h',f_i \in G$, and $G < H$ where $H$ satisfies $(\spadesuit)$:
\begin{itemize}
\item[$(\spadesuit)$] Let $\gamma$ be an essential simple closed curve on $\Sigma_{g,n}$.  For
some $f \in H$, assume that $f(\gamma)$ can be homotoped to be disjoint from $\gamma$.
Then $f(\gamma) = \gamma$.
\end{itemize}
We deduce that $h'(f_i(\gamma_i)) = h(\gamma_i)$.  This implies that
$h(\sigma) = h'(\sigma')$, so $\sigma$ and $\sigma'$ are in the same $G$-orbit, as desired.
\end{proof}

Recall that our goal is to prove that the dual complex of $\partial \oModuli_{g,n}(G)$
is homeomorphic to $\cC_{g,n}/G$.  Let $\Lambda = \{\gamma(1),\ldots,\gamma(k)\}$
be a set of orbit representatives for the $G$-action on the set of essential simple closed curves
on $\Sigma_{g,n}$.  Since $G < H$, the condition $(\spadesuit)$ implies two things:
\begin{itemize}
\item[$(\ast)$] The vertices of a simplex of $\cC_{g,n}$ lie in distinct $G$-orbits.
\item[$(\ast\ast)$] The group $G$ acts without rotations on
$\cC_{g,n}$, that is, the setwise stabilizer of a simplex stabilizes that simplex pointwise.
\end{itemize}
We conclude that $\cC_{g,n}/G$ is homeomorphic to the geometric realization of
the semisimplicial set $\fX$ whose $p$-simplices are the set of $G$-orbits of $p$-simplices
of $\cC_{g,n}$.  
To define the face maps on $\fX$, we 
choose compatible orderings on the vertices of simplices as follows.  Identify the
vertices of $\fX$ with $\Lambda = \{\gamma(1),\ldots,\gamma(k)\}$.  Order
the vertices of a simplex of $\fX$ using the ordering on $\Lambda$, and
define the $j^{\text{th}}$ face map by deleting the $j^{\text{th}}$ vertex.
For $1 \leq i_0 < \cdots < i_p \leq k$, the $p$-simplices of $\fX$ whose vertices
are $\{\gamma(i_0),\ldots,\gamma(i_p)\}$ are exactly the $G$-orbits
of $p$-simplices of $\cC_{g,n}$ that are $G$-rearrangements of $\{\gamma(i_0),\ldots,\gamma(i_p)\}$.

The $p$-simplices of the dual complex of $\partial \oModuli_{g,n}(G)$
are the set of path components of sets of the form
\[\Delta[\gamma(i_0),G] \cap \cdots \cap \Delta[\gamma(i_p),G] \quad \text{with $1 \leq i_0 < \cdots < i_p \leq k$}.\]
For $1 \leq i_0 < \cdots < i_p \leq k$, Claims \ref{claim:dualboundary.1} and \ref{claim:dualboundary.2} imply
that $\Delta[\gamma(i_0),G] \cap \cdots \cap \Delta[\gamma(i_p),G]$ is the disjoint union
of path-connected closed sets of the form $\Delta[\sigma,G]$ as $\sigma$ ranges over a set of orbit representatives
for the $G$-action on the set of $p$-simplices of $\cC_{g,n}$ that are $G$-rearrangements
of $\{\gamma(i_0),\ldots,\gamma(i_p)\}$.  There are finitely many such $G$-orbits,
so these are the path components of $\Delta[\gamma(i_0),G] \cap \cdots \cap \Delta[\gamma(i_p),G]$. 

Taking a path component
$\Delta[\sigma,G]$ to the $G$-orbit of $\sigma$ therefore gives a bijection between
the $p$-simplices of the dual complex and the $p$-simplices of $\fX$.
This bijection respects the corresponding face relations, and therefore implies that
these two complexes are isomorphic, as desired.
\end{proof}
 
\section{The proof of Boggi's theorem}
\label{section:boggiproof}

We now prove Boggi's theorem, whose statement we recall.  In it,
we emphasize that $G$ is a subgroup of $\Mod_{g,n}$, not the pure
mapping class group $\PMod_{g,n}$.

\newtheorem*{maintheorem:boggi}{Theorem~\ref{maintheorem:boggi}}
\begin{maintheorem:boggi}[{Boggi, cf.\ \cite[Lemma 5.5]{BoggiCongruence}}]
Fix $g,n \geq 0$.  Let $G < \Mod_{g,n}$ be a finite-index subgroup.
Then $\RH_k(\cC_{g,n}/G;\Q) = 0$ for $k \leq c(g,n)$, where
\[c(g,n) = \begin{cases}
2g-3 & \text{for $g \geq 1$ and $n = 0$},\\
2g+n-4 & \text{for $g \geq 1$ and $n \geq 1$},\\
n-5 & \text{for $g = 0$}.
\end{cases}\]
\end{maintheorem:boggi}
\begin{proof}
It is more convenient to work with cohomology, 
so we will prove that $\RH^k(\cC_{g,n}/G;\Q) = 0$
for $k \leq c(g,n)$.
Recall that Harer \cite[Theorem 3.5]{HarerVCD} proved that
$\cC_{g,n}$ is $c(g,n)$-connected.  Both being nonempty and being
path-connected pass to quotients, so there is nothing to prove
unless $c(g,n) \geq 1$, i.e., unless:
\begin{itemize}
\item $g \geq 2$; or
\item $g = 1$ and $n \geq 3$; or
\item $g = 0$ and $n \geq 6$.
\end{itemize}
These conditions ensure that $\Sigma_{g,n}$ is hyperbolic,
so our discussion of the moduli space of curves and its Deligne--Mumford compactification
in the previous sections applies.  Also, since being nonempty and
path-connected pass to quotients we must
only prove that $\HH^k(\cC_{g,n}/G;\Q) = 0$ for $1 \leq k \leq c(g,n)$.
We divide the proof into five steps.  The first two steps are preliminary.
The meat of the proof is in the last three steps.

\begin{step}{1}
\label{step:boggi.1}
We can assume without loss of generality that $G < \PMod_{g,n}$.
\end{step}

For the proof of this step, we temporarily rename $G$ to $\Gamma$.  
Set $G = \Gamma \cap \PMod_{g,n}$, so $G$ is a finite-index normal subgroup of $\Gamma$
with $G < \PMod_{g,n}$.
Assume that we can prove that $\HH^k(\cC_{g,n}/G;\Q) = 0$ for $1 \leq k \leq c(g,n)$.  We
will show that this will imply that $\HH^k(\cC_{g,n}/\Gamma;\Q) = 0$ for $1 \leq k \leq c(g,n)$,
so it is enough to prove the theorem for $G$.

Set $Q = \Gamma/G$.
The finite group $Q$ acts on $\cC_{g,n}/G$ and $\cC_{g,n}/\Gamma = (\cC_{g,n}/G)/Q$.
A standard result (see, e.g.,
\cite[Proposition 1.1]{PutmanNote}) now implies that for all $k \geq 0$ we have\footnote{The
reference \cite[Proposition 1.1]{PutmanNote} is about finite groups acting on simplicial complexes,
and $\cC_{g,n}/G$ might not inherit a natural simplicial complex structure from $\cC_{g,n}$.
However, by
subdividing $\cC_{g,n}$ appropriately we can ensure that $\cC_{g,n}/G$ does inherit a simplicial complex
structure from $\cC_{g,n}$, so \cite[Proposition 1.1]{PutmanNote} applies.}
\[\HH^k(\cC_{g,n}/\Gamma;\Q) \cong \HH^k(\cC_{g,n}/G;\Q)^{Q},\]
where the superscript means that we are taking the $Q$-invariants.
Since we are assuming that $\HH^k(\cC_{g,n}/G;\Q) = 0$ for $1 \leq k \leq c(g,n)$,
this implies that $\HH^k(\cC_{g,n}/\Gamma;\Q) = 0$ for $1 \leq k \leq c(g,n)$,
as desired.

\begin{step}{2}
\label{step:boggi.2}
We can assume without loss of generality that the following hold:
\begin{itemize}
\item[(i)] $G$ is a torsion-free normal subgroup of $\PMod_{g,n}$.
\item[(ii)] $\oModuli_{g,n}(G)$ is a smooth projective variety. 
\item[(iii)] $\partial \oModuli_{g,n}(G)$ is a simple normal crossing variety
whose dual complex is homeomorphic to $\cC_{g,n}/G$.
\end{itemize}
\end{step}

Again, temporarily rename $G$ to $\Gamma$.  Since
$\Gamma$ is a finite-index subgroup of $\PMod_{g,n}$,
Lemma \ref{lemma:dualboundary} implies that we can find
a finite-index normal subgroup $G$ of $\PMod_{g,n}$ such that
$G < \Gamma$ and such that (i)-(iii) hold.  Since $G$ is normal in $\PMod_{g,n}$,
it is also normal in $\Gamma$. 
The exact same argument we gave in Step \ref{step:boggi.1} shows
that to prove that $\HH^k(\cC_{g,n}/\Gamma;\Q) = 0$ for $1 \leq k \leq c(g,n)$,
it is enough to prove that $\HH^k(\cC_{g,n}/G;\Q) = 0$ for $1 \leq k \leq c(g,n)$.

\begin{step}{3}
\label{step:boggi.3}
We prove that $\HH^k_c(\Moduli_{g,n}(G);\Q) = 0$ for $k \leq c(g,n)+1$.  Here
we emphasize that $\Moduli_{g,n}(G)$ is the open moduli space and $\HH^k_c$ is compactly
supported cohomology.
\end{step}

Since $G$ is torsion-free, it follows that the moduli space $\Moduli_{g,n}(G) = \Teich_{g,n}/G$ is an
Eilenberg--Mac Lane space for $G$ (see Lemma \ref{lemma:coverproperties}).  
It also follows that the cohomological dimension of $G$
equals the virtual cohomological dimension of $\Mod_{g,n}$.
Harer \cite[Theorem 4.1]{HarerVCD} proved that the virtual
cohomological dimension of $\Mod_{g,n}$ is
\[\vcd(g,n) = \begin{cases}
4g-5   & \text{if $g \geq 2$ and $n=0$},\\
4g+n-4 & \text{if $g \geq 1$ and $n \geq 1$},\\
n-3    & \text{if $g = 0$ and $n \geq 3$}.
\end{cases}\]
Since $\Sigma_{g,n}$ is hyperbolic, the pair $(g,n)$ falls into one of these three cases.
It follows that $\HH_d(\Moduli_{g,n}(G);\Q) \cong \HH_d(G;\Q) = 0$ for $d \geq \vcd(g,n)+1$.
The space $\Moduli_{g,n}(G)$ is a smooth complex manifold with $\dim_{\C} \Moduli_{g,n}(G) = 3g+n-3$.
Its dimension as a (real) topological manifold is twice its complex dimension.  Applying
Poincar\'{e} duality, we see that
\[\HH^{(6g+2n-6)-d}_c(\Moduli_{g,n}(G);\Q) = 0 \quad \text{for $d \geq \vcd(g,n)+1$}.\]
In other words, $\HH^k_c(\Moduli_{g,n}(G);\Q) = 0$ for 
\[k \leq (6g+2n-6)-(\vcd(g,n)+1) = (6g+2n-7)-\vcd(g,n).\]
To complete the step, we must show that this vanishing range is equivalent to
$k \leq c(g,n)+1$.  Observe that
\[(6g+2n-7)-\vcd(g,n) = \begin{cases}
(6g-7)-(4g-5) = 2g-2 & \text{if $g \geq 2$ and $n=0$},\\
(6g+2n-7)-(4g+n-4) = 2g+n-3 & \text{if $g \geq 1$ and $n \geq 1$},\\
(2n-7)-(n-3) = n-4 & \text{if $g = 0$ and $n \geq 3$}.
\end{cases}\]
In all three cases, this is $c(g,n)+1$, as desired.

\begin{step}{4}
\label{step:boggi.4}
We prove that the inclusion $\partial \oModuli_{g,n}(G) \hookrightarrow \oModuli_{g,n}(G)$
induces a surjection 
\[\HH^k(\oModuli_{g,n}(G);\Q) \rightarrow \HH^k(\partial \oModuli_{g,n}(G);\Q) \quad \text{for $k \leq c(g,n)$}.\]
\end{step}

The long exact sequence of the pair $(\oModuli_{g,n}(G),\partial \oModuli_{g,n}(G))$ contains
the segment
\[\begin{tikzcd}[row sep=small, column sep=small]
\HH^k(\oModuli_{g,n}(G);\Q) \arrow{r} & \HH^k(\partial \oModuli_{g,n}(G);\Q) \arrow{r} & \HH^{k+1}(\oModuli_{g,n}(G), \partial \oModuli_{g,n}(G);\Q).
\end{tikzcd}\]
To prove the step, we must prove that the third term in this sequence vanishes for $k \leq c(g,n)$.
Letting $\Moduli_{g,n}(G)^{\ast}$ be the one-point compactification of $\Moduli_{g,n}(G)$, we have
\begin{align*}
\HH^{k+1}(\oModuli_{g,n}(G),\partial \oModuli_{g,n}(G);\Q) \cong &\RH^{k+1}(\oModuli_{g,n}(G)/\partial \oModuli_{g,n}(G);\Q) \\
\cong &\RH^{k+1}(\Moduli_{g,n}(G)^{\ast};\Q) \cong \HH^{k+1}_c(\Moduli_{g,n}(G);\Q).
\end{align*}
Step \ref{step:boggi.3} says that this vanishes for $k \leq c(g,n)$.  The step follows.

\begin{step}{5}
\label{step:boggi.5}
We prove that $\HH^k(\cC_{g,n}/G;\Q) = 0$ for $1 \leq k \leq c(g,n)$.
\end{step}

Since $\oModuli_{g,n}(G)$ is a smooth projective variety, Lemma \ref{lemma:strictpreserve} implies
that the image of the map $\HH^k(\oModuli_{g,n}(G);\Q) \rightarrow \HH^k(\partial \oModuli_{g,n}(G);\Q)$
is pure of weight $k$.  Step \ref{step:boggi.4} therefore implies that
$\HH^k(\partial \oModuli_{g,n}(G);\Q)$ is pure of weight $k$ for $k \leq c(g,n)$.  Being pure of weight $k$
implies that the weight-$(k-1)$ subspace vanishes, so this implies
in particular that
\[W_0 \HH^k(\partial \oModuli_{g,n}(G);\Q) = 0 \quad \text{for $1 \leq k \leq c(g,n)$}.\]
Theorem \ref{theorem:weight0} says that $W_0 \HH^k(\partial \oModuli_{g,n}(G);\Q)$ is
isomorphic to the $k^{\text{th}}$ rational cohomology group of the dual complex of
the simple normal crossing variety $\partial \oModuli_{g,n}(G)$, which is
homeomorphic to $\cC_{g,n}/G$.  The step follows.
\end{proof}

\section{Tool use}

Before they will allow me to post this paper, the moderators of the arXiv have told
me to declare any use of ``sophisticated tools'' in its preparation.  I suspect that
they do not consider me a ``sophisticated tool'' for the purposes of this declaration.
Despite this, I hope the following remarks will satisfy them:
\begin{itemize}
\item I have understood the technical details of the proofs in this paper for well over
a decade.  My 2012 paper \cite{PutmanWieland} depended on Theorem \ref{maintheorem:boggi},
and I would not have been comfortable writing \cite{PutmanWieland} without knowing
how to prove Theorem \ref{maintheorem:boggi}.
\item Every word of this paper was written by me on my iMac using {\tt vim} in a terminal window.
\item The figures were all drawn by hand using the software {\tt Inkscape}.  I did
use an LLM to convert the labels on the figures from the {\tt Inkscape} default
font to Computer Modern so they would match the font used in the rest of the paper.
\item The references are all papers/books that I already knew about or found using mathscinet
and google.  I suppose I might have looked at the AI overview at the top of a google search.
\item I used an LLM to check the grammar and usage of this paper after it was written.
Because of this, it has slightly fewer ``the the''s, and also takes more care
with trivial edge cases.  For instance, the LLM was very insistent that
since $\Moduli_{0,3}$ is a single point it is wrong to say that $\Moduli_{g,n}$ is
not compact and that we need to add points corresponding to stable curves to compactify it.
\item To find references in SGA, I used the LLM generated English translation of SGA that was
posted at \url{https://grothendiecksga.com}.  I would like to thank the anonymous
creators of that website.
\end{itemize}

\end{document}